\documentclass{amsart}
\usepackage{graphicx} % Required for inserting images
\usepackage{amsthm, amsmath, amssymb, tikz, bm}
\usepackage{mathtools}
\usepackage{xifthen}
\usepackage{comment}
\usepackage[margin=4.5cm]{geometry}
\usepackage[shortlabels]{enumitem}
\usepackage{todonotes}
\usepackage[colorlinks=true,
linkcolor=blue,citecolor=blue,
urlcolor=blue]{hyperref}

\newcommand{\ba}{\backslash}
\newcommand{\romannum}[1]{\romannumeral#1\relax}

\newcommand{\cC}{\mathcal{C}}
\newcommand{\cl}{\mathrm{cl}}

\newtheorem{theorem}{Theorem}[section]
\newtheorem{lemma}[theorem]{Lemma}
\newtheorem{corollary}[theorem]{Corollary}
\newtheorem{proposition}[theorem]{Proposition}

\theoremstyle{definition}

\theoremstyle{remark}

\numberwithin{equation}{section}

\title{Degeneracy: From Graphs to Matroids}

\begin{document}

\author[Bickle]{Allan Bickle}
\address{Department of Mathematics\\
Purdue University, West Lafayette, Indiana}
\email{aebickle@purdue.edu}

\author[Douthitt]{James Dylan Douthitt}
\address{Department of Mathematics\\
Syracuse University, Syracuse, New York}
\email{jddouthi@syr.edu}

\author[Ge]{Wayne Ge}
\address{Mathematics Department\\
Louisiana State University\\
Baton Rouge, Louisiana}
\email{yge4@lsu.edu}

\author[Singh]{Jagdeep Singh}
\address{Department of Mathematics and Statistics\\
Mississippi State University\\
Starkville, Mississippi}
\email{singhjagdeep070@gmail.com}

\subjclass{05B35, 05C75}
\date{\today}

\begin{abstract}
A graph is $k$-degenerate if every subgraph has a vertex of degree at most $k$. We extend this notion to matroids, defining a loopless matroid $M$ to be $k$-degenerate if every restriction of $M$ contains a cocircuit of size at most $k$; $M$ is minimally $k$-degenerate if it has cogirth $k$ and every proper restriction of $M$ has cogirth at most $k-1$. Our main result characterizes extremal minimally $k$-degenerate matroids. We also extend the known arboricity bound for matroids, showing that $k$-degenerate matroids have arboricity at most $k$ and providing sharper bounds.
\end{abstract}

\maketitle

\section{Introduction}

The notation and terminology follow Oxley~\cite{Oxl11}. We call a graph \textit{non-empty} if it has at least one edge. Throughout the paper, all graphs and matroids are finite, loopless, non-empty and may contain parallel elements. In particular, all subgraphs and restrictions considered in the paper are assumed to be non-empty. A graph $G$ is \textit{$k$-degenerate} if every subgraph of $G$ has a vertex of degree at most $k$, and $G$ is \textit{$k$-edge-degenerate} if every subgraph of $G$ has a bond of size at most $k$. Note that $G$ is $k$-edge-degenerate if and only if every subgraph of $G$ has edge-connectivity at most $k$. The \textit{degeneracy} $d(G)$ of a graph $G$ is the smallest integer $k$ such that $G$ is $k$-degenerate. Analogously, the \textit{edge-degeneracy} $ed(G)$ of $G$ is the smallest integer $k$ such that $G$ is $k$-edge-degenerate. Since the set of edges incident to a vertex necessarily contains a bond, it is clear that $ed(G) \leq d(G)$. The concept of edge-degeneracy, introduced by Matula in \cite{matula_72}, is referred to there as the strength of the graph.

While degeneracy in graphs is a well-studied area, the parallel concept of edge-degeneracy is less explored. Transitioning this concept to matroids via cocircuits provides a unifying framework. A loopless matroid $M$ is \textit{$k$-degenerate} if every restriction of $M$ has a cocircuit of size at most $k$. The \textit{degeneracy} $d(M)$ of a loopless matroid $M$ is the smallest integer $k$ such that $M$ is $k$-degenerate. This definition of matroid degeneracy is equivalent to the invariant $D_m(M)$ introduced by Gordon Royle~\cite{royle_2015}. Observe that the cocircuits of the cycle matroid $M(G)$ of a graph $G$ correspond to the bonds of $G$. Therefore, the degeneracy of the cycle matroid $M(G)$ equals the edge-degeneracy of $G$.

The \textit{girth} of $M$, denoted $g(M)$, is the size of a smallest circuit of $M$, while the \textit{cogirth} of $M$, denoted $g^*(M)$, is the size of a smallest cocircuit of $M$. For an integer $k\geq 2$, a loopless matroid $M$ is \textit{minimally $k$-degenerate} if $M$ has cogirth $k$ and every proper restriction of $M$ has cogirth at most $k-1$. 

In Section~\ref{sec: simple}, we provide the following characterization of simple extremal minimally $3$-degenerate matroids. Recall that, for $k\geq 2$, the wheel graph $\mathcal{W}_k$ consists of a $k$-cycle with an additional vertex adjacent to all vertices of the cycle via edges called \textit{spokes}. The whirl matroid $\mathcal{W}^k$ is obtained from $M(\mathcal{W}_k)$ by relaxing a circuit-hyperplane, and its \textit{spokes} are the elements in more than one triangle. A more detailed treatment of wheels and whirls may be found in~\cite[Section~8.4]{Oxl11}. It is worth noting that, every edge of $\mathcal{W}_3$ as well as every element of $\mathcal{W}^2$ is a spoke.

\begin{theorem}\label{thm: simple min 3-degen}
Let $M$ be a simple minimally $3$-degenerate matroid. Then $|E(M)|\leq 2r(M)$.
Moreover, equality is attained if and only if
\begin{enumerate}
    \item[(\romannum{1})] $M$ is the cycle matroid of a wheel $\mathcal{W}_k$ for some $k\geq 3$,
    \item[(\romannum{2})] $M$ is a whirl $\mathcal{W}^k$ for some $k\ge 2$, or
    \item[(\romannum{3})] $M=M_1\oplus_2 M_2$ at a basepoint $p$ where, for each $i\in \{1,2\}$, the matroid $M_i$ is a member of (\romannum{1}) or (\romannum{2}) and $p$ is a spoke in $M_i$.
\end{enumerate}
\end{theorem}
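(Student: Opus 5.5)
Throughout, $M$ is simple, has cogirth $3$, and every proper restriction has cogirth at most $2$. The plan is to show that $M$ is $3$-connected or a $2$-sum of $3$-connected pieces of the same type, and then to use a chain-theorem argument on the $3$-connected case.

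\textbf{Step 1: $M$ is connected.} Suppose $M=M_1\oplus M_2$. Then the cocircuits of $M_1$ are cocircuits of $M$, so $M_1$ has cogirth at least $3$. But $M_1$ is a proper restriction, so this contradicts minimality.

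\textbf{Step 2: reduction to the $3$-connected case.} If $M$ is not $3$-connected, write $M=M_1\oplus_2 M_2$ with basepoint $p$. Cocircuits of $M$ avoiding the "other side" correspond to cocircuits of $M_i$ not containing $p$. A cocircuit of $M_i$ through $p$ combines with a cocircuit of $M_j$ through $p$. From this, each $M_i$ has cogirth at least $3$ away from $p$.

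Minimality of $M$ under deleting $e\in E(M_i)-p$ should force the following: every cocircuit of $M_i\ba e$ that is small, say of size at most $2$, either avoids $p$ or pairs with a $2$-element cocircuit through $p$ on the other side. I will need a careful count here, since $M_i$ itself may fail to have cogirth $3$ at $p$.

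The intended route is to show that each $M_i$ is again simple and minimally $3$-degenerate, possibly after noting that $p$ lies in a triangle. Then induction on $|E|$ gives $|E(M_i)|\le 2r(M_i)$. Since $r(M)=r(M_1)+r(M_2)-1$ and $|E(M)|=|E(M_1)|+|E(M_2)|-2$, we get $|E(M)|\le 2r(M)$. Equality forces equality in both parts. It also forces both parts to be $3$-connected: a further $2$-sum decomposition loses nothing in the count, but the characterization has only one $2$-sum. So I must check that a triple $2$-sum fails minimality or simplicity. The condition that $p$ is a spoke comes from requiring that deleting a rim element adjacent to $p$ still create a small cocircuit.

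\textbf{Step 3: the $3$-connected case.} Deleting any $e$ creates a cocircuit of size at most $2$ in $M\ba e$. Hence every element $e$ lies in a $3$-element cocircuit (a triad) $\{e,f,g\}$, since a cocircuit $C^*$ of $M\ba e$ satisfies that $C^*$ or $C^*\cup e$ is a cocircuit of $M$. So $M$ is simple, $3$-connected, and every element lies in a triad.

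To bound $|E|$, count with a cobasis $B^*$. The bound $|E|\le 2r$ is equivalent to $r^*(M)\le r(M)$. I would apply Tutte's Wheels-and-Whirls Theorem (or Seymour's Splitter Theorem). If $M$ is not a wheel or whirl, some element $e$ has $M\ba e$ or $M/e$ $3$-connected.

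I will try to choose contractions that preserve the property "every element in a triad" and simplicity, and run induction. The alternative is a direct count. The sets $E-T$ for triads $T$ are hyperplanes, and the triads cover $E$. Thus the dual $M^*$ is a $3$-connected cosimple matroid whose every element is in a triangle, so we need $r(M^*)\le r(M)$. Known results on matroids with every element in a triangle, as in Oxley's work on "$3$-connected matroids with every element in a triangle", should give the bound with equality exactly for wheels and whirls.

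\textbf{Main obstacle.} I expect the main obstacle to be the minimality bookkeeping that ensures simplicity of $M^*$, together with exactly characterizing equality. Here I would first try induction via the Splitter Theorem, using triangle–triad interplay: fans of alternating triangles and triads. The goal is to show that equality forces a single maximal fan covering $E$, which is precisely a wheel or whirl.
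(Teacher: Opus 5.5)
There is a genuine gap, and the decisive one is in Step~3. The only consequence of minimality you carry into the $3$-connected case is that every element lies in a triad. That property implies neither $|E(M)|\le 2r(M)$ nor the wheel/whirl conclusion.

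Here is a counterexample. Let $N$ be the rank-$8$ real matroid consisting of three points on each of five lines $T_1,\dots,T_5$ in general position, so that $r_N(X)=\min\{8,\sum_i\min\{2,|X\cap T_i|\}\}$. From this rank function one checks that $N$ is $3$-connected, each $T_i$ is a triangle, and $|E(N)|=15<16=2r(N)$. Hence $M=N^*$ has the following properties:
simple and $3$-connected with cogirth $3$;
every element in a triad, so every $M\backslash e$ has a cocircuit of size at most $2$;
no triangles at all;
yet $|E(M)|=15>14=2r(M)$.

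So the ``known result'' you want for $M^*$ is false. Neither a direct count nor a Splitter/Wheels-and-Whirls induction that tracks only the triad property can work. Tutte's theorem by itself only produces a $3$-connected $M/e$, and you have no hypothesis that survives the contraction.

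What you must use is minimality under \emph{all} restrictions. A $3$-connected matroid on at least four elements has cogirth at least $3$. So a $3$-connected minimally $3$-degenerate $M$ has no proper $3$-connected restriction on four or more elements. It is this property that pins $M$ down as a wheel or whirl; the paper quotes a theorem from the literature characterizing such matroids.

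Steps~1--2 also leave the real work undone. The inequality needs no structure at all. For any $e$, $M\backslash e$ is simple, of rank $r(M)$, and $2$-degenerate, since its restrictions are proper restrictions of $M$. Proposition~\ref{prop: size_bound} then gives $|E(M)|-1\le 2(r(M)-1)+1$.

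Your route to the bound instead rests on the $2$-sum parts being minimally $3$-degenerate, and that is false in general. Let $G_1$ be $K_4$ with one edge subdivided, and let $p$ be one of the two subdividing edges. Then $M(G_1)\oplus_2 M(K_4)$ at $p$ is minimally $3$-degenerate with $11=2r-1$ elements. In both of its $2$-sum decompositions, one part has a $2$-element cocircuit. The parts are minimally $3$-degenerate only when $|E(M)|=2r(M)$. That is exactly what Lemma~\ref{lem: 2-sum_components_extremal} extracts, by counting with the simple $2$-degenerate matroids $M_i\backslash p$.

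Likewise, ruling out iterated $2$-sums and forcing $p$ to be a spoke are only asserted; your remark about deleting a rim element adjacent to $p$ is a heuristic, not an argument. The paper gets both from Lemma~\ref{lem: contraction is mkd}, which shows that $M_i/p$ must itself be minimally $3$-degenerate. This is combined with the check that, among the extremal matroids, only wheels and whirls have such an element, and only at a spoke.
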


We extend this to the case for simple matroids when $k \geq 4$.

\begin{theorem}\label{thm: simple min k-degen}
For $k \geq 4$, let $M$ be a simple minimally $k$-degenerate matroid. Then $|E(M)| \leq (k-1)(r(M)-1)+2$. Moreover, equality is attained if and only if $M \cong U_{2,k+1}$ or $M \cong U_{2,k+1} \oplus_2 U_{2,k+1}$.
\end{theorem}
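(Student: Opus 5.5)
We argue by induction on $r(M)$, using two structural facts about minimally $k$-degenerate matroids: they are connected, and every element lies in a cocircuit of size exactly $k$.

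First, connectivity. If $M=M_1\oplus M_2$, then each $M_i$ is a proper restriction and so has cogirth at most $k-1$. But a cocircuit of $M_i$ is also a cocircuit of $M$, which contradicts $g^*(M)=k$.

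Second, cocircuits through each element. For $e\in E(M)$, the restriction $M\backslash e$ has a cocircuit $C^*$ with $|C^*|\le k-1$. Either $C^*$ or $C^*\cup e$ is a cocircuit of $M$, and $g^*(M)=k$ forces $C^*\cup e$ to be a cocircuit of size exactly $k$.

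For the base case $r(M)=2$, a simple rank-$2$ matroid is $U_{2,n}$, whose cogirth is $n-1$. So $n=k+1$, and equality $|E|=(k-1)+2$ holds. Rank $1$ is impossible for $k\ge 2$, since a simple rank-$1$ matroid has cogirth $1$.

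For the inductive step, take a cocircuit $C^*$ of size $k$, so $H=E-C^*$ is a hyperplane. Consider the restriction $M|H$ and its simplification. The goal is to show $|H|\le (k-1)(r(M)-2)+2$, which gives $|E|=|H|+k$. This falls short of the desired bound by $1$, so the crude count is not enough. The key steps are:
\begin{enumerate}[(a)]
\item Use degeneracy of $M|H$: it is $k$-degenerate, being a restriction of a $k$-degenerate matroid (every restriction of $M$ is either $M$ itself, with cogirth $k$, or has cogirth at most $k-1$). Decompose $M|H$ into minimally $j$-degenerate pieces with $j\le k$ by repeatedly peeling off small cocircuits.
\item Apply induction to those pieces.
\end{enumerate}

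A cleaner route is probably a global counting argument. Order the elements by repeatedly removing a cocircuit of size at most $k-1$ from the current restriction; the first removal, from $M$ itself, uses size $k$. Each removal of a cocircuit lowers the rank by exactly $1$. We start at rank $r$ and end at rank $1$, leaving a parallel class that is a single element since $M$ is simple. This gives
\[
|E|\le k+(k-1)(r-2)+1=(k-1)(r-1)+2,
\]
which is exactly the bound.

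For equality, every step must be tight. The first cocircuit has size exactly $k$, every subsequent peeled cocircuit has size exactly $k-1$, and the final rank-$1$ restriction is a single point. To extract the structure, look at the last stage. The rank-$2$ restriction reached just before the end has $k$ elements, and it must be a simple line, so it is $U_{2,k}$.

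The main obstacle is the flexibility in the choice of cocircuits. I plan to exploit it as follows:
\begin{enumerate}[(i)]
\item Since the peeling can start from \emph{any} element's $k$-cocircuit, and every intermediate restriction must have all its smallest cocircuits of size exactly $k-1$ (otherwise the count becomes strict), each such restriction of rank $\ge 2$ is itself extremal minimally $(k-1)$-degenerate in a suitable sense.
\item Using $k\ge4$, show that the restriction to every hyperplane complementary to a $k$-cocircuit is a union of lines of size $k$ glued along points. Then argue that a line of $M$ containing $k$ points extends to $U_{2,k+1}$.
\item Show that $M$ is a parallel connection or $2$-sum of copies of $U_{2,k+1}$.
\item Count once more: a $2$-sum of $t$ copies has rank $t+1$ and $t(k+1)-2(t-1)=t(k-1)+2$ elements. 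Compared with the bound $(k-1)t+2$, this is equality for every $t$. So the count alone does not cut down to $t\le2$. Instead, for $t\ge3$ the middle copy's remaining elements form a cocircuit of size $k-1$, so the cogirth drops below $k$ and $M$ is not minimally $k$-degenerate. That rules out $t\ge3$ and leaves $U_{2,k+1}$ and $U_{2,k+1}\oplus_2U_{2,k+1}$.
\end{enumerate}
Of these, step (ii), forcing the $U_{2,k+1}$ structure, is where I expect the real difficulty. It is also where $k\ge4$ is needed, to exclude wheel- and whirl-like configurations.
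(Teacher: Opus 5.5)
There is a genuine gap. Your proof of the inequality is correct and is essentially the paper's count: the paper slices $M\backslash e$ as a simple $(k-1)$-degenerate matroid, while you peel one $k$-cocircuit and slice the proper restriction that remains. Your observations that every element lies in a $k$-cocircuit and that tightness forces a restriction $L\cong U_{2,k}$ are also the paper's starting points.

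The gap is the characterization of equality. Steps (ii) and (iii) are the whole content of the theorem, and you state them as goals with no mechanism. The paper splits on $3$-connectivity.

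\emph{If $M$ is $3$-connected and $r(M)\ge 3$:} orthogonality with the triangles of $L$ shows that any cocircuit meeting $L$ meets it in at least $k-1$ elements. A separate lemma shows that two distinct $k$-cocircuits share at most one element. Its proof: if $e,f\in C^*\cap D^*$, slice the extremal matroid $M\backslash e$ starting with $C^*-e$; the next slice can be chosen inside $D^*-C^*$, which has at most $k-2$ elements, contradicting tightness. A $k$-cocircuit through $e_1\in L$ cannot equal $L$ by $3$-connectivity, so it misses some $e_k\in L$. A $k$-cocircuit through $e_k$ then shares at least $k-2\ge 2$ elements with it, a contradiction; this is exactly where $k\ge 4$ is used.

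\emph{If $M$ is not $3$-connected:} $M=M_1\oplus_2 M_2$, and a counting argument shows both parts are simple, minimally $k$-degenerate and extremal, so induction applies. Then $U_{2,k+1}\oplus_2 U_{2,k+1}$ is excluded as a part because $M_i/p$ must be minimally $k$-degenerate, which fails for every element of that matroid. Nothing in your outline plays the role of the intersection lemma or of this $2$-sum reduction.

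Step (iv) is also wrong as stated. Take a chain $M_1\oplus_2 M_2\oplus_2 M_3$ of copies of $U_{2,k+1}$, with $p$ shared by $M_1,M_2$ and $q$ shared by $M_2,M_3$. The set $E(M_2)-\{p,q\}$ is not a cocircuit. In the parallel connection, $E(M_1)-p$ spans $p$, $E(M_3)-q$ spans $q$, and $\{p,q\}$ spans $M_2$. So the complement of that set is spanning, and the set is coindependent.

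In fact the chain has cogirth exactly $k$; its only $k$-cocircuits are $E(M_1)-p$ and $E(M_3)-q$. It fails to be minimally $k$-degenerate for a different reason. Every cocircuit meeting $E(M_2)-\{p,q\}$ has at least $2k-2$ elements, so deleting one such element leaves a proper restriction of cogirth $k$.

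Moreover, (iv) treats only chains, whereas a $2$-sum decomposition can be an arbitrary tree. The paper avoids this by inducting across a single $2$-separation and using the contraction property of the basepoint.
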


We then drop the simplicity requirement to provide the following characterization for general matroids.

\begin{theorem}\label{thm: non-simple min k-degen}
Let $M$ be a minimally $k$-degenerate matroid for some $k\geq 2$. Then $|E(M)|\leq (k-1)r(M)+1$. Moreover, equality is attained if and only if $M$ can be obtained via iterated series connections of copies of $U_{1,k}$. 
\end{theorem}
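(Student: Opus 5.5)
The plan is to prove the bound with a greedy cocircuit-peeling argument, and then to characterize equality by induction on $r(M)$, splitting off one copy of $U_{1,k}$ at a time.

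\textbf{Preliminary observations.} Let $M$ be minimally $k$-degenerate, and let $e\in E(M)$. The restriction $M\backslash e$ has a cocircuit of size at most $k-1$. Cocircuits of $M\backslash e$ are the minimal nonempty sets of the form $C^*-e$ with $C^*$ a cocircuit of $M$. Since $g^*(M)=k$, this forces a cocircuit $D$ of $M$ with $e\in D$ and $|D|=k$. Hence every element lies in a $k$-element cocircuit. Also $M$ is connected: if $M=M_1\oplus M_2$, then $M_1$ would be a proper restriction of cogirth at least $k$.

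\textbf{The bound.} First choose a $k$-cocircuit $D_1$ and pass to $M\backslash D_1=M|H_1$, where $H_1$ is a hyperplane, so the rank drops by exactly one. Inductively, $M|H_i$ is a proper nonempty restriction, so it has a cocircuit $D_{i+1}$ with $|D_{i+1}|\le k-1$. Deleting $D_{i+1}$ again lowers the rank by exactly one. After $r=r(M)$ steps the ground set is exhausted, and the last step removes a rank-one parallel class of size at most $k-1$. Therefore
\[ |E(M)|=\sum_{i=1}^{r}|D_i|\le k+(k-1)(r-1)=(k-1)r(M)+1. \]
Equality holds if and only if every such peeling sequence removes exactly $k-1$ elements at each step after the first.

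\textbf{Sufficiency.} Clearly $U_{1,k}$ is minimally $k$-degenerate: its only cocircuit is $E$, and $U_{1,j}$ has cogirth $j$. It attains $k=(k-1)\cdot 1+1$. For a series connection $S(M_1,M_2)$ at a basepoint $p$, we have $r(S)=r(M_1)+r(M_2)$ and $|E(S)|=|E_1|+|E_2|-1$. So if both factors attain equality, then $S$ does too.

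It remains to check that $S$ is minimally $k$-degenerate. Here I use duality: $S(M_1,M_2)^*=P(M_1^*,M_2^*)$. Hence the cocircuits of $S$ are the cocircuits of $M_1$ and of $M_2$ (including those through $p$), together with the sets $(C_1^*\cup C_2^*)-p$ where $p\in C_1^*\cap C_2^*$. All of these have size at least $k$, so $g^*(S)\ge k$. For minimality, delete an element $e$, say from the $M_1$-side. Then $S\backslash e$ is a series connection of $M_1\backslash e$ with $M_2$, or a direct sum if $e=p$. The small cocircuit of $M_1\backslash e$ remains a cocircuit there, so the cogirth is at most $k-1$.

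\textbf{Necessity.} Assume equality and induct on $r(M)$. If $r(M)=1$, then $M=U_{1,k}$. If $r(M)\ge 2$, use tightness of the peeling argument at the last step: the final restriction is a parallel class $P$ of size exactly $k-1$. I aim to show that $P$ together with one further element $p$ forms a $k$-cocircuit $D$. Then $M$ decomposes as $S(M',U_{1,k})$ with basepoint $p$, where $M'$ is obtained by contracting $D-p$ (equivalently, by contracting $P$). This makes $\{P\cup p\}$ a $2$-separating "leaf".

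Next I verify that $M'$ is again minimally $k$-degenerate with $r(M')=r(M)-1$ and $|E(M')|=|E(M)|-(k-1)$. This is a count check together with the cocircuit description of series connections above, run in reverse. Then I apply induction.

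\textbf{Main obstacle.} The main obstacle is producing this leaf. The peeling order is not unique, so I would exploit the freedom in choosing $D_1$ through any element. Take $D_1$ to be a $k$-cocircuit meeting $P$, and use that every step of every peeling sequence has size exactly $k-1$. This should force $P$ to be a series-class-like block in $M^*$, namely a $(k-1)$-element set in which every element is in a $k$-cocircuit with a common element $p$. Establishing this rigidity is where most of the work will lie.
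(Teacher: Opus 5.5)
Your bound and the overall shape of the equality argument (the last slice is a parallel class $P$ of size $k-1$; split off a copy of $U_{1,k}$ there; contract $P$; induct on rank) agree with the paper. However, the necessity direction is left open at exactly the step you call the main obstacle, so as written it is not a proof. The missing idea is short, and the strategy you sketch for it (varying $D_1$ and extracting rigidity from all peeling sequences) is unnecessary. Every remaining set in your peeling is a flat of $M$, so the final piece $P$ is a full parallel class of $M$ with $|P|=k-1$. By your own preliminary observation, an element $e_1\in P$ lies in a cocircuit $D$ of $M$ with $|D|=k$. Orthogonality with the $2$-circuits $\{e_1,e_j\}$ forces $P\subseteq D$, hence $D=P\cup\{p\}$ for a single element $p$. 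That is the leaf, and it is exactly how the paper obtains $C^*=\{e_1,\ldots,e_{k-1},f\}$. The paper then shows directly (Lemma~\ref{lem: parallel_class}, again by orthogonality) that $M/e_1\backslash\{e_2,\ldots,e_{k-1}\}=M/P$ is minimally $k$-degenerate. Your ``reverse'' route also works once $M=S(M/P,U_{1,k})$ is in hand. Cocircuits of $M/P$ are cocircuits of $M$, so $g^*(M/P)\ge k$. Moreover, a proper restriction $(M/P)|A$ of cogirth at least $k$ would give a proper restriction of $M$ of cogirth at least $k$: namely $(M/P)|A$ itself if $p\notin A$, and $S((M/P)|A,U_{1,k})$ if $p\in A$.

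Your sufficiency argument has a smaller gap. You check only single-element deletions of $S=S(M_1,M_2)$, but minimal $k$-degeneracy requires every proper restriction to have cogirth less than $k$. Single deletions do not control this in general: $M(K_{2,3})$ has cogirth $2$ and every single-element deletion has a coloop, yet a $4$-circuit is a proper restriction of cogirth $2$. The same idea fixes it. For a nonempty proper $A\subseteq E(S)$, some $A_i=A\cap E(M_i)$ is a nonempty proper subset of $E(M_i)$. Hence $M_i|A_i$ has a cocircuit $C^*\cap A_i$ of size at most $k-1$, where $C^*$ is a cocircuit of $M_i$ and therefore of $S$. Then $C^*\cap A=C^*\cap A_i$ contains a cocircuit of $S|A$ of size at most $k-1$.
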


We also explore the \textit{arboricity}, $a(M)$, of a matroid $M$, defined as the minimum number of independent sets into which $E(M)$ can be partitioned. Minh and Trung~\cite{nguyen} bounded the arboricity of a matroid by its maximum cocircuit size (cocircumference). We extend this by proving that arboricity is bounded by degeneracy. In addition, we characterize the matroids that attain equality in the bound of Minh and Trung.

\begin{theorem}
\label{thm: equality_arboricity_intro}
Let $M$ be a connected matroid. Then the arboricity of $M$ equals the maximum cocircuit size of $M$ if and only if $M$ is isomorphic to $U_{k,k+1}$ or $U_{1,k}$ for some $k \geq 1$.
\end{theorem}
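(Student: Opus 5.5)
We first record the inequality $a(M)\le c^*(M)$, where $c^*(M)$ is the maximum cocircuit size, and then analyse when it is tight. By Edmonds' covering theorem, $a(M)=\max_{X}\lceil |X|/r(X)\rceil$, the maximum taken over nonempty $X\subseteq E(M)$ with $r(X)>0$. Let $k=c^*(M)$.

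For the "if" direction we compute directly. For $U_{1,k}$ every element is a cocircuit complement... more concretely, the only cocircuit is $E$ itself, of size $k$. The arboricity is $k$, since each independent set is a single element. For $U_{k,k+1}$ the cocircuits are the complements of hyperplanes, i.e.\ of $(k-1)$-subsets, so they have size $2$. The arboricity is $\lceil (k+1)/k\rceil=2$ for $k\ge 1$. When $k=1$ the two families coincide ($U_{1,2}$), so this case needs only a consistency check.

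For the "only if" direction, suppose $a(M)=k$, and choose $X$ with $|X|>(k-1)r(X)$. The key step is to take such an $X$ minimal, or to take $X=\cl(X)$ and pass to $M|X$. Let $r=r(X)$. Every cocircuit of $M|X$ is contained in a cocircuit of $M$, so every cocircuit of $M|X$ has size at most $k$. Now fix a basis $B$ of $M|X$. For each $e\in B$ the fundamental cocircuit $C^*(e,B)$ consists of $e$ together with some elements of $X-B$. Every element of $X-B$ lies in at least one such fundamental cocircuit, since its fundamental circuit meets $B$. This gives
\[ |X-B|\le \sum_{e\in B}(|C^*(e,B)|-1)\le r(k-1), \]
so $|X|\le kr$. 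To reach $|X|>(k-1)r$ we need many elements of $X-B$ to lie in several fundamental cocircuits. The refinement is a double count: each $f\in X-B$ lies in exactly $|C(f,B)|-1$ fundamental cocircuits. Hence
\[ \sum_{f\in X-B}(|C(f,B)|-1)\le r(k-1), \]
while $|X-B|\ge (k-1)r+1-r$.

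We then split into cases. If every $f\in X-B$ has $|C(f,B)|=2$, then each $f$ is parallel to an element of $B$, and the bound forces some parallel class of size at least $k$ spanning... This case should yield a $U_{1,k}$ restriction. In the general case, the averaging forces $|C(f,B)|-1$ to be about $1$ unless $r$ is small, and the extremal configuration where all fundamental circuits are the whole of $B+f$ gives $U_{r,r+1}$.

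The main obstacle is passing from the restriction $M|X$ back to $M$ using connectivity. The plan is to show that tightness forces every cocircuit of $M|X$ meeting the relevant structure to have exactly $k$ elements and to be an actual cocircuit of $M$. Then, if $E(M)\ne X$, connectivity gives an element outside $X$ that enlarges some cocircuit beyond size $k$, which is a contradiction. I would first try to prove that $X$ is a separator of $M$, by showing that every cocircuit of $M$ meeting $X$ is contained in $X$. That step, together with handling $U_{k,k+1}$ (where the arboricity is $2$, so $k=2$ and the relevant sets are circuits), is where most of the case analysis will lie.
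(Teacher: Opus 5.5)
Your ``if'' direction is fine, and so is the observation that every cocircuit of $M|X$ has size at most $k$. The gap is in the ``only if'' direction, at the step you call the general case, which is where all the content lies.

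The single-basis double count gives $\sum_{f\in X-B}(|C(f,B)|-2)\le r-1$. So density forces most elements of $X-B$ into \emph{exactly one} fundamental cocircuit, not several. Pigeonhole then yields only a parallel class of size $k-1$, one short of what you need.

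No argument that uses only the fundamental cocircuits of one basis can close this gap. Take $k=3$, $r=2$, let $X$ have parallel classes $\{e_1,f_1\}$, $\{e_2,f_2\}$, $\{f_3\}$, and let $B=\{e_1,e_2\}$. Both fundamental cocircuits $\{e_1,f_1,f_3\}$ and $\{e_2,f_2,f_3\}$ have size $3$, and $|X|=5>2r$, so every constraint your count uses holds. Yet there is no parallel class of size $3$. This configuration is excluded only by the non-fundamental cocircuit $X-\{f_3\}$, of size $4$.

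Your sentence about averaging and the extremal configuration $U_{r,r+1}$ is not an argument. For $k\ge 3$ your own inequality rules out $|C(f,B)|=r+1$ for all $f$ unless $r=1$, since $|X-B|\ge 2$. $U_{r,r+1}$ belongs to the case $c^*(M)=2$, where density is vacuous and the right tool is duality: if all cocircuits of a connected $M$ have size $2$, then $M^*$ is one parallel class and $M\cong U_{n-1,n}$.

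The lifting step is not the real obstacle either. Once $X$ contains a parallel class $P$ with $|P|\ge k$, every cocircuit of $M$ meeting $P$ contains $P$ and hence equals $P$. Connectivity then gives $E(M)=P$.

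What is missing is a structural step that sees cocircuits beyond a single fundamental system. The paper gets this from Theorem~\ref{e-circumference_intro} and Theorem~\ref{cospan}. Equality forces $a(M)=c^*_e(M)$ for every $e$, so every element is free. Hence all circuits are spanning and $M$ is uniform, and $\lceil n/r\rceil=n-r+1$ forces $r=1$ or $n=r+1$.

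To stay within your framework when $k\ge 3$, one repair is as follows.
\begin{enumerate}
\item Take $X$ inclusion-minimal with $|X|>(k-1)r(X)$. Then $|X|=(k-1)r(X)+1$ and $M|X$ is connected.
\item Applying minimality to $X-D^*$ gives $|D^*|\ge k$ for every cocircuit $D^*$ of $M|X$, so all its cocircuits have size exactly $k$.
\item For every basis $B$ of $M|X$, the fundamental graph is then connected with $(k-1)r+1$ vertices and $(k-1)r$ edges, hence a tree.
\item A tree has no $4$-cycles, so the unique-perfect-matching (Krogdahl) lemma makes a pivot on an edge $ef$ add exactly $(|C(f,B)|-2)(k-2)$ edges.
\item All fundamental graphs have the same edge count, so $|C(f,B)|=2$ for every $f$. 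This gives $r(X)=1$ and $M|X\cong U_{1,k}$, and the parallel-class argument above finishes.
\end{enumerate}
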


We provide a sharper bound on arboricity using \emph{$e$-cocircumference}, $c^*_e(M)$, the size of a largest cocircuit containing a specific element $e$.

\begin{theorem}
\label{e-circumference_intro}
Let $M$ be a connected matroid with at least two elements. Then $a(M) \leq c^*_e(M)$ for every $e \in E(M)$. 
\end{theorem}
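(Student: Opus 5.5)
The plan is to go through degeneracy. First I show that $d(M)\le c^*_e(M)$ for every $e$. Then I show that arboricity is bounded by degeneracy, via Edmonds' covering theorem. Write $k=c^*_e(M)$.

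\emph{Step 1: every restriction of $M$ has a cocircuit of size at most $k$.} Let $X\subseteq E(M)$ be nonempty.
\begin{enumerate}
\item[(a)] If $X=\{e\}$, then $M|X\cong U_{1,1}$, since $M$ is loopless. Its only cocircuit has size $1\le k$.
\item[(b)] Otherwise pick $f\in X$ with $f\neq e$. Since $M$ is connected, so is $M^*$. Hence the circuits of $M^*$, namely the cocircuits of $M$, link any two elements. So there is a cocircuit $C^*$ of $M$ with $e,f\in C^*$.
\item[(c)] Cocircuits of $M|X$ are the minimal nonempty sets of the form $D\cap X$ with $D$ a cocircuit of $M$. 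Also, every nonempty $D\cap X$ contains a cocircuit of $M|X$, because $X-(D\cap X)$ lies in the hyperplane $E-D$ and so is not spanning in $M|X$.
\item[(d)] Since $f\in C^*\cap X$, the set $C^*\cap X$ is nonempty. It therefore contains a cocircuit of $M|X$ of size at most $|C^*|\le c^*_e(M)=k$.
\end{enumerate}
Thus $M$ is $k$-degenerate, that is, $d(M)\le k$.

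\emph{Step 2: $a(M)\le d(M)$.} By Edmonds' covering theorem, $a(M)=\max_{\emptyset\neq X}\lceil |X|/r(X)\rceil$. So it suffices to show $|X|\le k\,r(X)$ for every nonempty $X$ whenever $M$ is $k$-degenerate. I argue by induction on $|X|$.
\begin{enumerate}
\item[(a)] $M|X$ has a cocircuit $D$ with $|D|\le k$.
\item[(b)] $X-D$ is a hyperplane of $M|X$, so $r(X-D)=r(X)-1$.
\item[(c)] If $X-D=\emptyset$, then $r(X)=1$ and $|X|=|D|\le k$.
\item[(d)] Otherwise $M|(X-D)$ is a restriction of $M$ and hence again $k$-degenerate. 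By induction, $|X-D|\le k(r(X)-1)$, so $|X|\le k\,r(X)$.
\end{enumerate}

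Combining the two steps gives $a(M)\le d(M)\le c^*_e(M)$ for every $e\in E(M)$. The main point needing care is Step 1(b)–(c): we need a cocircuit through $e$ that actually meets $X$. This is exactly where connectivity of $M$, used via $M^*$, is needed. Without connectivity, $X$ could lie entirely in a component avoiding $e$, and the bound fails.
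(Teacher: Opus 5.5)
Your proof is correct, and it reaches the bound by a genuinely different route from the paper.

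The paper works dually. It applies Theorem~\ref{covering} (its form of Oxley's covering theorem) to cover $E(M)$ by at most $c_e(M)-1$ cocircuits through $e$. It then deletes $e$ from each to obtain coindependent sets and adds $\{e\}$, giving $a^*(M)\le c_e(M)$, and finally applies this to $M^*$.

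You never need a covering theorem of that kind. Connectivity of $M^*$ alone gives, for every nonempty $X$, a cocircuit $C^*\ni e$ meeting $X$. Then $g^*(M|X)\le |C^*\cap X|\le c^*_e(M)$, so $d(M)\le c^*_e(M)$. The step from degeneracy to arboricity is exactly Proposition~\ref{prop: arboricity_bound}, so you could simply cite it. You instead reprove it via Edmonds' formula and the counting of Proposition~\ref{prop: size_bound}, rather than the paper's basis extension along a slicing; either works.

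Your route is more elementary and proves the stronger chain $a(M)\le d(M)\le \min_{e\in E(M)} c^*_e(M)$. This shows the theorem is essentially a corollary of Proposition~\ref{prop: arboricity_bound}.

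What the paper's route buys is an explicit family of cobases that can be modified to save one set when $e$ is not free. It enlarges $C_1^*-\{e\}$ to a cobasis $B_1^*$ and reroutes $e$ into $C_2^*-\{f\}$. This gives Theorem~\ref{cospan}, which drives the equality characterization in Theorem~\ref{thm: equality_arboricity_intro}. The degeneracy route cannot deliver that refinement. In $AG(3,2)$ every cocircuit has size $4$ and no element is free, yet $d(AG(3,2))=4=c^*_e(AG(3,2))$.
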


The remainder of the paper is organized as follows. Section~\ref{sec: basic_prop} establishes the basic properties and bounds for $k$-degenerate matroids, including maximum size bounds and the introduction of bi-$k$-degeneracy. Section~\ref{sec: simple} characterizes extremal minimally $k$-degenerate matroids, proving Theorems~\ref{thm: simple min 3-degen}, \ref{thm: simple min k-degen}, and \ref{thm: non-simple min k-degen}. Section~\ref{arboricity} contains the proofs of Theorem~\ref{thm: equality_arboricity_intro} and Theorem~\ref{e-circumference_intro} concerning matroid arboricity. Finally, Section~\ref{sec: graphs} explores the interplay of matroid and graph degeneracies and notes a consequence of Theorem~\ref{thm: simple min 3-degen} for graphs.

\section{Basic Properties: Matroid Analogue of Edge Degeneracy}\label{sec: basic_prop}

Observe that the degeneracy of a graph $G$ is the maximum $\delta(H)$ over all subgraphs $H$ of $G$, where $\delta(H)$ denotes the minimum degree of $H$.
Similarly, the edge-degeneracy of a graph $G$ is the maximum $\lambda(H)$ over all subgraphs $H$, where $\lambda(H)$ is the edge-connectivity of $H$. Analogously, the degeneracy of a matroid $M$ is the maximum cogirth $g^*(N)$ over all restrictions $N$ of $M$. The following proposition establishes several equivalent characterizations of $k$-degeneracy.

\begin{proposition}
\label{matroid_equivalence}
For a matroid $M$, the following are equivalent:
\begin{enumerate}[label=(\roman*)]
    \item $M$ is $k$-degenerate.
    \item For every non-empty flat $F$ of $M$, the restriction $M|F$ has a cocircuit of size at most $k$. 
    \item There is a sequence $F_0, \ldots, F_r$ of flats of $M$ where $r = r(M)$, the flat $F_0$ equals $E(M)$, and $F_r = \emptyset$, such that for each $1 \leq i \leq r$, the flat $F_i$ is obtained from $F_{i-1}$ by deleting a cocircuit $C^*_{i-1}$ of the restriction $M|F_{i-1}$ with $|C^*_{i-1}| \leq k$.
\end{enumerate}
\end{proposition}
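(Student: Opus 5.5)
We prove the cycle of implications (i)$\Rightarrow$(ii)$\Rightarrow$(iii)$\Rightarrow$(i). The implication (i)$\Rightarrow$(ii) is immediate, since $M|F$ is a restriction of $M$ for every non-empty flat $F$.

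For (ii)$\Rightarrow$(iii) we build the sequence greedily. Set $F_0=E(M)$, which is a flat. Suppose $F_{i-1}$ is a non-empty flat of rank $r-i+1$. By (ii), $M|F_{i-1}$ has a cocircuit $C^*_{i-1}$ with $|C^*_{i-1}|\le k$. Cocircuits of $M|F_{i-1}$ are complements in $F_{i-1}$ of hyperplanes of $M|F_{i-1}$. Hence $F_i=F_{i-1}-C^*_{i-1}$ is a hyperplane of $M|F_{i-1}$, and it has rank $r-i$. A flat of a restriction to a flat is a flat of $M$: if $X$ is a flat of $M|F$ with $F$ a flat of $M$, then $\cl_M(X)\subseteq F$, so $\cl_M(X)=\cl_{M|F}(X)=X$. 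Therefore each $F_i$ is a flat of $M$. The rank drops by exactly one at each step, so after $r$ steps we reach a rank-$0$ flat. Since $M$ is loopless, the only rank-$0$ flat is $\emptyset$, so $F_r=\emptyset$.

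For (iii)$\Rightarrow$(i), take any non-empty restriction $M|X$. Because $F_r=\emptyset$, there is a largest index $i$ with $X\subseteq F_i$; then $i<r$ and $X\not\subseteq F_{i+1}$. Let $C^*=C^*_i$, a cocircuit of $M|F_i$ with $|C^*|\le k$, and let $H=F_i-C^*$, a hyperplane of $M|F_i$. Then $X\cap C^*\neq\emptyset$.

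The key step is to show that $X\cap C^*$ contains a cocircuit of $M|X$. This uses the standard fact that for $X\subseteq F$, the cocircuits of $(M|F)|X=(M|F)\setminus(F-X)$ are the minimal non-empty sets of the form $D\cap X$ with $D$ a cocircuit of $M|F$. Concretely, $\cl_{M|X}(X\cap H)\subseteq \cl(H)\cap X=H\cap X$, so $X\cap H$ is a flat of $M|X$. It is proper, since $X\cap C^*\neq\emptyset$. A proper flat is contained in some hyperplane $H'$ of $M|X$, and then $X-H'$ is a cocircuit of $M|X$ contained in $X-H=X\cap C^*$. This cocircuit has size at most $|C^*|\le k$, which gives (i).

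The only delicate points are these flat and cocircuit bookkeeping facts. The main one is that a cocircuit of a restriction is contained in the trace of a cocircuit of the larger matroid, and I handle it via hyperplanes as above. Looplessness is needed to ensure $F_r=\emptyset$ in (ii)$\Rightarrow$(iii).
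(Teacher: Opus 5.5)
Your proof is correct and follows essentially the same route as the paper. For (iii)$\Rightarrow$(i), the paper picks the same index $j$ with $A\subseteq F_j$ and $A\not\subseteq F_{j+1}$, and simply asserts that $C^*_j\cap A$ contains a cocircuit of $M|A$; it also dismisses (i)$\Rightarrow$(ii)$\Rightarrow$(iii) as easy. Your hyperplane argument and greedy construction correctly supply the details the paper omits, including the use of looplessness to get $F_r=\emptyset$.
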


\begin{proof}
It is easy to see that (i) $\implies$ (ii) $\implies$ (iii). To prove the result, it is enough to show that (iii) implies (i). For a non-empty subset $A$ of $E(M)$, let $M|A$ be a restriction of $M$, and let $F_0 ,\ldots, F_r$ be a sequence of flats of $M$ as described in (iii). Then there is a $j$ in $\{0, \ldots, r-1\}$ such that $A \subseteq F_j$ and $A \not\subseteq F_{j+1}$. Observe that $C^*_j\cap A \neq \emptyset$ and the set $C^*_j \cap A$ contains a cocircuit of $M|A$. Since $|C^*_j \cap A| \le |C^*_j| \le k$, the restriction $M|A$ has a cocircuit of size at most $k$.
\end{proof}

We call the ordered sequence $C^*_0, \ldots, C^*_{r-1}$ in (iii) a \textit{slicing} of a $k$-degenerate matroid $M$.

It is well-known that the number of edges in a simple $k$-degenerate graph on $n \geq k$ vertices is at most $kn - \binom{k+1}{2}$ (see, for example, \cite{lickwhite}). Using the slicing decomposition, we establish the analogue for $k$-degenerate matroids.  

\begin{proposition}
\label{prop: size_bound}
Let $M$ be a $k$-degenerate matroid of rank $r$. Then $|E(M)|\leq kr$. If $M$ is simple, then $|E(M)| \leq k(r-1)+1$. 
\end{proposition}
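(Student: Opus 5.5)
We will use the slicing given by Proposition~\ref{matroid_equivalence}(iii). Since $M$ is $k$-degenerate, there are flats $E(M)=F_0\supsetneq F_1\supsetneq\cdots\supsetneq F_r=\emptyset$ with $F_i=F_{i-1}-C^*_{i-1}$, where $C^*_{i-1}$ is a cocircuit of $M|F_{i-1}$ and $|C^*_{i-1}|\le k$. The first step is to check that these sets partition $E(M)$. Each element of $E(M)$ is removed at exactly one stage, because the $F_i$ are nested and $F_r$ is empty. Hence
\[
|E(M)|=\sum_{i=0}^{r-1}|C^*_i|\le kr ,
\]
which is the first bound. We should also confirm that there really are exactly $r$ stages. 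Deleting a cocircuit of $M|F_{i-1}$ leaves a hyperplane of $M|F_{i-1}$, which is a flat of $M$ of rank one less. So the ranks drop by exactly one at each step, from $r$ down to $0$.

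For the simple case, we look at the last stage. The flat $F_{r-1}$ has rank $1$ and is nonempty, since the inclusions are proper. Its only cocircuit as a restriction is $F_{r-1}$ itself, so $C^*_{r-1}=F_{r-1}$. Because $M$ is simple, a rank-$1$ flat is a single element, so $|C^*_{r-1}|=1$. This gives
\[
|E(M)|\le k(r-1)+1 .
\]

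The only point needing care is this last step: we must make sure the final slice is the whole rank-one flat and not something larger. That follows directly from the rank count above. If $r=0$, then $M$ is empty, which contradicts the non-empty convention, so we may assume $r\ge1$ throughout.
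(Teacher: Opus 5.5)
Your proof is correct and follows the paper's argument: a slicing partitions $E(M)$ into $r$ cocircuits of size at most $k$, and when $M$ is simple the last slice is a rank-one flat, hence a single element. Your extra check that the rank drops by exactly one at each step, so that there are exactly $r$ slices, is a detail the paper leaves implicit.
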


\begin{proof}
Let $C^*_0, \ldots, C^*_{r-1}$ be a slicing of $M$.
Since each set in the slicing has size at most $k$, the result follows. If $M$ is simple, the final non-empty flat $F_{r-1}$ must consist of a single element. Thus $|C^*_{r-1}| = 1$. The preceding $r-1$ sets in the slicing have size at most $k$. Therefore $|E(M)| \le k(r-1) + 1$.
\end{proof}

A simple $k$-degenerate graph $G$ is \textit{maximal} if adding any edge between two non-adjacent vertices of $G$ produces a graph that is not $k$-degenerate. A simple \textit{maximal $k$-edge-degenerate} graph is defined similarly. These graphs have been well studied (see, for example, \cite{lai_90, mader_71}). The following result is given by Lick and White \cite[Corollary~1]{lickwhite}. 

\begin{proposition}
\label{prop: lick_white_maximal_k_degen}
If $G$ is a simple maximal $k$-degenerate graph on $n$ vertices with $n\geq k$, then $G$ has $kn-\binom{k+1}{2}$ edges.
\end{proposition}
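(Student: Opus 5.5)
We prove the statement by induction on $n$, following Lick and White's argument; we assume $k\ge 1$. The strategy is to show that a simple maximal $k$-degenerate graph on $n\ge k$ vertices is built from $K_k$ by repeatedly adding a vertex joined to exactly $k$ existing vertices. Since $K_k$ has $\binom{k}{2}=kk-\binom{k+1}{2}$ edges and each added vertex contributes exactly $k$ edges, the count $kn-\binom{k+1}{2}$ then follows.

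The base case is $n=k$. Every simple graph on $k$ vertices has maximum degree at most $k-1$, so it is $k$-degenerate. Maximality therefore forces $G=K_k$, which has $\binom{k}{2}$ edges, as required.

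For the inductive step, let $n>k$ and let $v$ be a vertex of minimum degree, so $\deg v\le k$.

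First we show that $\deg v\ge k$. Suppose instead $\deg v<k$. Since $n-1\ge k>\deg v$, there is a vertex $u\ne v$ not adjacent to $v$. We claim $G+uv$ is still $k$-degenerate. Take a $k$-degenerate ordering: repeatedly remove a vertex of degree at most $k$ in the remaining graph, starting by removing $v$ first from $G$ (allowed since $\deg_G v\le k$), then continuing with any valid removal order for $G-v$ (which exists as $G-v$ is $k$-degenerate). In $G+uv$, remove $v$ first; its degree is $\deg_G v+1\le k$. After that, the remaining graph equals $G-v$, so the same order works. Hence every subgraph of $G+uv$ has a vertex of degree at most $k$, by the standard equivalence between $k$-degeneracy and the existence of such an elimination ordering. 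This contradicts maximality, so $\deg v=k$.

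Next we show that $G-v$ is maximal $k$-degenerate on $n-1\ge k$ vertices. Clearly $G-v$ is $k$-degenerate. Suppose adding some non-edge $xy$ to $G-v$ kept it $k$-degenerate. Then in $G+xy$ we can remove $v$ first, since its degree is still $k$, and then proceed with an ordering for $(G-v)+xy$. So $G+xy$ is $k$-degenerate, contradicting maximality of $G$.

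By induction, $G-v$ has $k(n-1)-\binom{k+1}{2}$ edges. Adding back $v$ with its $k$ edges gives $kn-\binom{k+1}{2}$ edges for $G$.

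The main point needing care is the elimination-ordering equivalence: a graph is $k$-degenerate if and only if its vertices can be ordered so that each vertex has at most $k$ neighbours later in the order. The forward direction comes from repeatedly deleting a vertex of degree at most $k$. For the converse, in any subgraph $H$, the vertex of $H$ appearing earliest in the ordering has at most $k$ neighbours in $H$. With this equivalence in hand, the remaining steps are routine.
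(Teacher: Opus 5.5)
Your proof is correct. The paper gives no proof of this proposition; it only quotes it from Lick and White (their Corollary~1), so your argument replaces a citation rather than competing with an argument in the text.

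What you wrote is essentially the classical reasoning, in two steps:
\begin{enumerate}
\item A maximal $k$-degenerate graph on $n>k$ vertices has minimum degree exactly $k$. A vertex $v$ of degree less than $k\le n-1$ has a non-neighbour $u$, and $G+uv$ stays $k$-degenerate because $v$ can be eliminated first.
\item Deleting a vertex of degree $k$ preserves maximality.
\end{enumerate}
Hence $G$ arises from $K_k$ by repeatedly adding a vertex of degree $k$.

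This is the graph counterpart of the paper's own slicing argument in Proposition~\ref{prop: maximal_non_simple}. Your elimination ordering plays the role of a slicing. Adding an edge at a vertex with fewer than $k$ later neighbours plays the role of the principal extension that enlarges a slicing set of size less than $k$. The requirement that $u$ be a non-neighbour of $v$ mirrors the simplicity constraint behind Proposition~\ref{prop: maximal_simple}.

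Your route has one extra benefit. Every simple $k$-degenerate graph can be enlarged to a maximal one on the same vertex set, so your argument also recovers the upper bound $kn-\binom{k+1}{2}$ for all simple $k$-degenerate graphs on $n\ge k$ vertices. That is the well-known bound the paper quotes just before Proposition~\ref{prop: size_bound}.
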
 

Mader~\cite{mader_71} proved the following for maximal $k$-edge-degenerate graphs. 

\begin{proposition}
\label{prop: mader_maximal_k_degen}
The number of edges in a simple $k$-edge degenerate graph on $n \geq k$ vertices is at most $kn - \binom{k+1}{2}$, and the equality is attained if and only if $G$ is maximal $k$-degenerate.
\end{proposition}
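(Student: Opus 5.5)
We argue by induction on $n$, peeling off either a low-degree vertex or a small edge cut, and then read off the equality case from the same induction. Write $f(n)=kn-\binom{k+1}{2}$; we may assume $k\ge 2$, since the case $k=1$ is the forest case and is immediate. For the base case $n=k$ we have $f(k)=\binom{k}{2}$, which is the maximum number of edges of a simple graph on $k$ vertices. So the bound holds, with equality exactly for $K_k$, and $K_k$ is (trivially) maximal $k$-degenerate.

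For $n>k$, let $G$ be simple and $k$-edge-degenerate. There are two cases.

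\emph{Case 1: $G$ has a vertex $v$ of degree at most $k$.} The graph $G-v$ is still simple and $k$-edge-degenerate, and it has $n-1\ge k$ vertices. Induction gives
\[
|E(G)|\le f(n-1)+k=f(n).
\]
Equality forces $\deg v=k$ and equality for $G-v$.

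\emph{Case 2: $\delta(G)\ge k+1$.} Since $G$ is $k$-edge-degenerate, it has a bond $\delta(X)$ of size at most $k$; put $Y=V(G)-X$. Summing degrees over $X$ gives $(k+1)|X|\le |X|(|X|-1)+k$. This inequality fails for $|X|\le k+1$, so $|X|\ge k+2$, and likewise $|Y|\ge k+2$. The graphs $G[X]$ and $G[Y]$ are $k$-edge-degenerate, since this property passes to subgraphs. Induction therefore gives
\[
|E(G)|\le f(|X|)+f(|Y|)+k=f(n)-\binom{k+1}{2}+k<f(n),
\]
because $\binom{k+1}{2}>k$ for $k\ge 2$. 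So the bound holds in this case too, and with strict inequality.

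For the equality characterization, the induction shows that equality can only arise through Case 1 at every step. Hence $G$ is obtained from $K_k$ by repeatedly adding a new vertex joined to exactly $k$ existing vertices. It remains to connect this construction with maximal $k$-degeneracy, and this is the step needing the most care.

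\emph{Such a graph is maximal $k$-degenerate.} Each such $G$ is $k$-degenerate, because the construction order, read backwards, always deletes a vertex of degree $k$. Suppose we add an edge $uw$ to $G$. Consider any subgraph of the new graph, and let $x$ be its vertex that appears last in the construction order. If $x$ is not $u$ or $w$, then $x$ has degree at most $k$ in that subgraph. If $x\in\{u,w\}$, we must check that the subgraph cannot be forced to have minimum degree $k+1$; the cleaner route is to note that the new graph has $f(n)+1$ edges. This exceeds the bound of Proposition~\ref{prop: size_bound}'s graph analogue, namely the $kn-\binom{k+1}{2}$ bound for $k$-degenerate graphs quoted before Proposition~\ref{prop: lick_white_maximal_k_degen}. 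So the new graph is not $k$-degenerate, and $G$ is maximal.

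\emph{Conversely, every maximal $k$-degenerate graph attains equality.} Such a graph is $k$-edge-degenerate, since $ed\le d$, and by Proposition~\ref{prop: lick_white_maximal_k_degen} it has exactly $f(n)$ edges.

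The main obstacle is Case 2, specifically the lower bound $|X|,|Y|\ge k+2$. This is exactly where simplicity is used, and it is what makes the cut case strictly lossy.
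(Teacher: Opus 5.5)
Your proof is correct. The paper gives no proof of this statement: it quotes it from Mader~\cite{mader_71}, so there is no internal argument to match. What you wrote is the classical Mader-type induction, and it checks out. Either a vertex of degree at most $k$ is deleted, or $\delta(G)\ge k+1$. In the latter case the degree count forces both shores of a cut of size at most $k$ to have at least $k+2$ vertices. That lets induction apply to both shores at a net loss of $\binom{k+1}{2}-k>0$ edges. This yields a self-contained proof of the bound and of the forward direction of the equality case (equality $\Rightarrow$ built from $K_k$ by adding degree-$k$ vertices $\Rightarrow$ maximal $k$-degenerate).

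A few small cleanups:
\begin{itemize}
\item Drop the abandoned ``last vertex in the construction order'' attempt in the maximality step; the counting argument alone suffices.
\item The edge bound for $k$-degenerate graphs that you cite there is already a consequence of what you just proved, since $ed(H)\le d(H)$ makes every $k$-degenerate graph $k$-edge-degenerate. So that step needs no outside citation.
\item In Case~2, if $G$ is disconnected, read ``bond'' as any edge cut $\delta(X)$ with $\emptyset\ne X\subsetneq V(G)$ and $|\delta(X)|\le k$, for instance a bond of one component. Nothing else changes.
\item Simplicity is also used in your base case, not only in Case~2.
\end{itemize}

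Your converse direction (maximal $k$-degenerate $\Rightarrow$ $kn-\binom{k+1}{2}$ edges) relies on Proposition~\ref{prop: lick_white_maximal_k_degen}, which is legitimate here. It can also be seen directly. In an ordering where each vertex has at most $k$ earlier neighbours, maximality forces the $i$-th vertex to have exactly $\min(i-1,k)$ earlier neighbours. These sum to $\binom{k}{2}+k(n-k)=kn-\binom{k+1}{2}$. This would make your whole argument independent of both citations.
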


Transitioning this to matroids, we distinguish between general and simple cases. We call a matroid $M$ \textit{general maximal $k$-degenerate} if $M$ is $k$-degenerate and no single-element extension of $M$ having the same rank is $k$-degenerate. Using the slicing decomposition, we establish the following.

\begin{proposition}
\label{prop: maximal_non_simple}
Let $M$ be a $k$-degenerate matroid of rank $r$. Then $M$ is general maximal $k$-degenerate if and only if $|E(M)| = kr$. 
\end{proposition}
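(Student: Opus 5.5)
We prove the two directions separately. Both use the slicing of Proposition~\ref{matroid_equivalence}(iii) and the bound $|E(M)|\le kr$ of Proposition~\ref{prop: size_bound}.

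\emph{If $|E(M)|=kr$, then $M$ is general maximal $k$-degenerate.} Suppose $M'$ is a single-element extension of $M$ by an element $e$ with $r(M')=r(M)=r$. We need $M'$ loopless, since otherwise it is not in our class of matroids at all. If $M'$ were $k$-degenerate, Proposition~\ref{prop: size_bound} would give $kr+1=|E(M')|\le kr$, which is impossible. So no such extension is $k$-degenerate.

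\emph{If $M$ is general maximal $k$-degenerate, then $|E(M)|=kr$.} We argue the contrapositive. Assume $|E(M)|<kr$, and fix a slicing $C^*_0,\ldots,C^*_{r-1}$ with flats $F_0=E(M)\supsetneq F_1\supsetneq\cdots\supsetneq F_r=\emptyset$. Each $C^*_i$ has size at most $k$ and their sizes sum to $|E(M)|<kr$, so some index $j$ has $|C^*_j|\le k-1$. We add a new element $e$ parallel to some element $f\in C^*_j$, giving a loopless extension $M'$ of the same rank.

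First we check that the chain is still a chain of flats of $M'$. For $i\le j$, set $F'_i=F_i\cup\{e\}$, and for $i>j$ keep $F'_i=F_i$. Since $e$ is parallel to $f$, a flat of $M$ containing $f$ becomes a flat of $M'$ once $e$ is added. A flat of $M$ not containing $f$ remains a flat of $M'$, because $e\in\cl_{M'}(F)$ holds exactly when $f\in\cl_M(F)=F$.

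Next we check the cocircuits. For $i<j$, $C^*_i\subseteq F_i\setminus F_{i+1}$, and $f\in F_{i+1}$. Hyperplanes of $M|F_i$ containing $f$ gain $e$, so $C^*_i$ remains the complement of the hyperplane $F'_{i+1}$ in $M'|F'_i$. For $i=j$, the set $F_{j+1}$ is a hyperplane of $M|F_j$ not containing $f$, so it remains a hyperplane of $M'|F'_j$. Its complement is $C^*_j\cup\{e\}$, of size at most $k$. For $i>j$ nothing changes.

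Thus $M'$ has a slicing with all parts of size at most $k$, and Proposition~\ref{matroid_equivalence} shows $M'$ is $k$-degenerate. Hence $M$ is not general maximal, which completes the contrapositive.

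The main obstacle is the flat and cocircuit bookkeeping in the second direction, namely verifying that adding a parallel element preserves the flats and the cocircuits of the successive restrictions. This rests on the fact that parallel elements lie in exactly the same flats.
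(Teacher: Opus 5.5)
Your proof is correct and follows the paper's argument. For one direction you apply Proposition~\ref{prop: size_bound}; for the other you choose a slicing set of size less than $k$ and build a same-rank extension whose new element joins that set, so the modified sequence is still a slicing. The only difference is which extension you use. You add $e$ parallel to some $f\in C^*_j$, while the paper takes the principal extension of $M$ on the flat $F_t$. Both are principal extensions on a flat contained in $F_j$ but not in $F_{j+1}$, so your bookkeeping goes through. The paper's choice has the advantage of adapting directly to the simple case, Proposition~\ref{prop: maximal_simple}, where a parallel extension is not allowed.
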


\begin{proof}
Observe that if $|E(M)| = kr$, then, by Proposition~\ref{prop: size_bound}, $M$ is general maximal $k$-degenerate. 

Conversely, suppose that $M$ is general maximal $k$-degenerate. It suffices to show that every set $C^*_{0}, \ldots, C^*_{r-1}$ in a slicing of $M$ has size equal to $k$. Suppose not. Then for some $t \leq r-1$, we have $|C^*_t| < k$. Note that $C^*_t$ is a cocircuit of the restriction $M|F_t$, where $F_t = E(M) - (C^*_0 \cup \ldots \cup C^*_{t-1})$ is a flat of $M$. Let $M'$ be the principal extension of $M$ with respect to the flat $F_t$, and let $e$ be the newly added element. Observe that $C^*_0, \ldots, C^*_{t-1}, C^*_t \cup \{e\}, C^*_{t+1} \ldots, C^*_{r-1}$ is a slicing of $M'$. Therefore, $M'$ is a $k$-degenerate extension of $M$ that has the same rank as $M$, a contradiction.
\end{proof}

Since the last set $C^*_{r-1}$ in a slicing of a general maximal $k$-degenerate matroid $M$ has size $k$, it follows that $M$ has a parallel class of size $k$. In particular, general maximal $k$-degenerate matroids need not be connected. For instance, a direct sum of copies of $U_{1,k}$ is general maximal $k$-degenerate. 

We now consider simple matroids. For a simple matroid $M$, we say $M$ is \textit{simple maximal $k$-degenerate} if $M$ is $k$-degenerate and no simple single-element extension of $M$ having the same rank is $k$-degenerate. Observe that for $k\geq 2$, the unique simple maximal $k$-degenerate matroid of rank two is $U_{2,k+1}$. Since simple maximal $k$-degenerate matroids of successive higher ranks are constructed from this connected base by iteratively adjoining cocircuits of size $k$, a straightforward inductive argument implies that for $k \geq 2$, every simple maximal $k$-degenerate matroid is connected. 

The proof of Proposition~\ref{prop: maximal_non_simple} can be adapted easily to show the following. 

\begin{proposition}
\label{prop: maximal_simple}
Let $M$ be a simple $k$-degenerate matroid of rank $r$. Then $M$ is simple maximal $k$-degenerate if and only if $|E(M)| = k(r-1) + 1$. 
\end{proposition}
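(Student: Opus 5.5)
The argument mirrors the proof of Proposition~\ref{prop: maximal_non_simple}, with two changes: simplicity pins down the size of the last slicing set, and the extension must be chosen so that it stays simple.

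\emph{Equality implies maximality.} Suppose $|E(M)| = k(r-1)+1$. Any simple single-element extension $M'$ of $M$ with the same rank has $k(r-1)+2$ elements. By Proposition~\ref{prop: size_bound}, $M'$ is then not $k$-degenerate, so $M$ is simple maximal $k$-degenerate.

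\emph{Maximality implies equality.} Let $M$ be simple maximal $k$-degenerate and fix a slicing $C^*_0,\dots,C^*_{r-1}$ with flats $F_0,\dots,F_r$.
\begin{enumerate}
\item Since $M$ is simple, the rank-one flat $F_{r-1}$ is a single element, so $|C^*_{r-1}|=1$.
\item It therefore suffices to show $|C^*_t|=k$ for every $t\le r-2$. Suppose instead that $|C^*_t|<k$ for some $t\le r-2$. Then $F_t$ is a flat of rank $r-t\ge 2$.
\item Let $M'$ be the principal extension of $M$ on $F_t$, with new element $e$ freely placed in $\mathrm{cl}(F_t)=F_t$. Because $F_t$ has rank at least $2$, $e$ is not parallel to any element of $M$. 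Hence $M'$ is simple and has rank $r$.
\item Write $F_i'$ for the flats of $M'$. The freeness of $e$ in $F_t$ gives $F_i'=F_i\cup\{e\}$ for $i\le t$ and $F_i'=F_i$ for $i>t$. This is because $e\in\mathrm{cl}_{M'}(X)$, for $X\subseteq F_t$, exactly when $X$ spans $F_t$.
\item From this, $C^*_t\cup\{e\}$ is a cocircuit of $M'|F_t'$, and each $C^*_i$ with $i<t$ remains a cocircuit of $M'|F_i'$.
\item So $C^*_0,\dots,C^*_{t-1},\,C^*_t\cup\{e\},\,C^*_{t+1},\dots,C^*_{r-1}$ is a slicing of $M'$ in which every set has size at most $k$.
\item By Proposition~\ref{matroid_equivalence}, $M'$ is $k$-degenerate, contradicting maximality.
\end{enumerate}
Hence $|E(M)|=k(r-1)+1$.

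\emph{Main obstacle.} The delicate points are steps 4 and 5: checking that the slicing really persists in $M'$. In particular, $F_t-C^*_t=F_{t+1}$ does not span $e$, since $F_{t+1}$ is a hyperplane of $M|F_t$. Also, for $i<t$, adding $e$ to both $F_i$ and $F_{i+1}$ leaves their difference equal to $C^*_i$, which is still a cocircuit.

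Simplicity of $M'$ is exactly where the hypothesis $t\le r-2$ is used. If $t=r-1$, then $F_t$ has rank one and $e$ would be parallel to its element. This is consistent with step 1, where $|C^*_{r-1}|=1$ is forced anyway.
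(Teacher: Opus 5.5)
Your proof is correct and is the adaptation of the principal-extension argument from Proposition~\ref{prop: maximal_non_simple} that the paper has in mind. You correctly identify that $r(F_t)\ge 2$ for $t\le r-2$ is what keeps the extension simple. One small wording fix: in step 4 the criterion you need is that $e\in\cl_{M'}(X)$ exactly when $F_t\subseteq\cl_M(X)$, for arbitrary $X\subseteq E(M)$, because for $i<t$ the flats $F_i$ contain $F_t$ rather than lie inside it.
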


This maximality directly dictates the basis structure of the matroid.

\begin{corollary}
A simple maximal $k$-degenerate matroid $M$ of rank at least two has $k$ bases $B_1, \ldots, B_k$ such that $B_i \cap B_j = \{e\}$ for all distinct $i$ and $j$ and some element $e$ of $M$. Furthermore, if $M$ is a general maximal $k$-degenerate matroid, it possesses $k$ pairwise disjoint bases. 
\end{corollary}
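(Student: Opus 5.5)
Both parts follow from a slicing in which every set has the largest size allowed, via a standard decomposition: for a flat $F$ and a cocircuit $C^*$ of $M|F$, the set $C^*$ is the complement in $F$ of a hyperplane of $M|F$.

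\textbf{General case.} Let $M$ be general maximal $k$-degenerate of rank $r$, and let $C^*_0,\ldots,C^*_{r-1}$ be a slicing with flats $F_0\supset F_1\supset\cdots\supset F_r=\emptyset$. By the proof of Proposition~\ref{prop: maximal_non_simple}, each $|C^*_i|=k$. Since $F_{i+1}=F_i-C^*_i$ is a hyperplane of $M|F_i$, we have $r(F_i)=r-i$, and every element of $C^*_i$ is a coloop of $M|(F_{i+1}\cup\{c\})$. Label $C^*_i=\{c_i^1,\ldots,c_i^k\}$ and set $B_j=\{c_0^j,\ldots,c_{r-1}^j\}$.

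We show $B_j$ is independent by induction from the top. The set $\{c_{r-1}^j\}$ is independent because $M$ is loopless. Assume $\{c_{i+1}^j,\ldots,c_{r-1}^j\}$ is independent; it lies in $F_{i+1}$, which is a flat. Since $c_i^j\notin F_{i+1}$, adding it keeps the set independent. Hence $B_j$ is independent of size $r$, so it is a basis. The sets $C^*_i$ are disjoint, so $B_1,\ldots,B_k$ are $k$ pairwise disjoint bases.

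\textbf{Simple case.} Now let $M$ be simple maximal $k$-degenerate. By Proposition~\ref{prop: maximal_simple} we have $|E(M)|=k(r-1)+1$. Since $|C^*_{r-1}|=1$ and each other $|C^*_i|\le k$, this forces $|C^*_i|=k$ for $i\le r-2$ and $C^*_{r-1}=F_{r-1}=\{e\}$. Choose one element from each of $C^*_0,\ldots,C^*_{r-2}$ together with $e$, using distinct indices as before, to form $B_1,\ldots,B_k$. The same induction shows each $B_j$ is a basis, and they pairwise intersect exactly in $\{e\}$.

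Rank at least two is needed only to make the statement meaningful. The only delicate point is the claim that the slicing sets all have maximal size in the simple case. This requires the adaptation of Proposition~\ref{prop: maximal_non_simple} that the paper asserts. Counting alone, with $|E|=k(r-1)+1$ and the last set of size $1$, suffices, provided the slicing is valid, which Proposition~\ref{matroid_equivalence} guarantees.
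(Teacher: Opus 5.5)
Your proposal is correct and follows essentially the same route as the paper. Both take a slicing and use Proposition~\ref{prop: maximal_simple} (respectively Proposition~\ref{prop: maximal_non_simple}) to force every slicing set other than the final singleton $\{e\}$ (respectively every slicing set) to have size exactly $k$; both then take the $j$-th element of each set and prove independence from the top down, using that each $F_{i+1}$ is a flat disjoint from $C^*_i$. Your explicit induction on the flats is just a more careful rendering of the paper's ``iteratively selecting one element from each $C^*_i$'' step.
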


\begin{proof}
Suppose $M$ is simple maximal $k$-degenerate with $r(M)\geq 2$. Let $C^*_0, \ldots, C^*_{r-1}$ be a slicing of $M$, and let $F_0, \ldots, F_r$ be the corresponding sequence of flats such that for $0 \leq i \leq r-1$, the set $C^*_i$ is a cocircuit of $M|F_i$. By Proposition~\ref{prop: maximal_simple}, $C^*_{r-1} = \{e\}$ for some element $e$, and the preceding $r-1$ sets each have size exactly $k$.

For $i$ in $\{0, \ldots, r-2\}$, label the elements of each cocircuit as $C^*_i = \{x_{i,1}, \ldots, x_{i,k}\}$. For each $j$ in $\{1, \ldots, k\}$, define $B_j = \{x_{0,j}, x_{1,j}, \ldots, x_{r-2,j}, e\}$. To see that $B_j$ is independent, recall that $C^*_i$ is a cocircuit of $M|F_i$. It follows that $r_{M|F_i}(F_{i+1}) = r_{M|F_i}(F_i) - 1$, and no element of $C^*_i$ belongs to the closure of $F_{i+1}$ in $M$. Consequently, iteratively selecting exactly one element from each $C^*_i$ sequentially from $i = r-1$ down to $0$ builds an independent set. Thus, $B_j$ is independent, and as $|B_j| = r$, it is a basis. Furthermore, because the sets $C^*_i$ are disjoint, the bases $B_1, \ldots, B_k$ share no elements other than $e$.

If $M$ is a general maximal $k$-degenerate matroid, then $|C^*_{r-1}| = k$. We label its elements $\{x_{r-1, 1}, \ldots, x_{r-1, k}\}$ and define $B_j = \{x_{0,j}, \ldots, x_{r-1, j}\}$. By the same logic, these $B_j$ form $k$ pairwise disjoint bases. 
\end{proof}

Next we note some consequences of the size bound established in Proposition~\ref{prop: size_bound}. 

A matroid $M$ is \textit{minimally connected} if $M$ is connected but $M \ba e$ is disconnected for each element $e$ of $M$. Similarly, a matroid $M$ is \textit{minimally $3$-connected} if $M$ is $3$-connected and $M \ba e$ is not $3$-connected for every element $e$ of $M$. The following result of Seymour \cite{seymour} establishes that a minimally connected matroid has an abundance of cocircuits of size two. 

\begin{proposition}
\label{seymour_2-cocircuits}
Let $M$ be a minimally connected matroid. Then $M$ has a cobasis each element of which is in a $2$-element cocircuit. 
\end{proposition}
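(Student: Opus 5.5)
The plan is induction on $|E(M)|$, contracting one element of a $2$-element cocircuit at each step. The real content is the existence of a $2$-element cocircuit in every minimally connected matroid with at least three elements. The base cases are a single element (a loop or coloop, handled directly) and $U_{1,2}$ (whose cobasis is a single element lying in the $2$-cocircuit $E(M)$). Both check trivially.

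For the inductive step, suppose $M$ is minimally connected with $|E(M)|\ge 3$, and suppose $M$ has a $2$-element cocircuit $\{x,y\}$, so $x$ and $y$ are in series.
\begin{enumerate}[(a)]
\item $M/x$ is minimally connected. It is connected, because contracting an element of a series pair preserves connectivity. Suppose $(M/x)\ba f$ were connected for some $f$. If $f=y$, then $M\ba y$ is disconnected with $x$ a coloop. Otherwise $M\ba f$ is a series extension of the connected matroid $(M\ba f)/x$, in which $\{x,y\}$ remains a series pair. Hence $M\ba f$ is connected, contradicting the minimality of $M$.
\item By induction, $M/x$ has a cobasis $B^*$ each element of which lies in a $2$-element cocircuit of $M/x$. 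The cocircuits of $M/x$ are exactly the cocircuits of $M$ avoiding $x$, so each such $2$-cocircuit is also a cocircuit of $M$.
\item $E(M/x)-B^*$ is a basis of $M/x$, so $(E(M/x)-B^*)\cup\{x\}$ is a basis of $M$. Therefore $B^*\cup\{x\}$ is a cobasis of $M$.
\item Since $x$ lies in the $2$-cocircuit $\{x,y\}$, every element of $B^*\cup\{x\}$ lies in a $2$-element cocircuit, which completes the induction.
\end{enumerate}

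The main obstacle is proving that a minimally connected matroid $M$ with $|E(M)|\ge 3$ has a $2$-element cocircuit. I would argue as follows. For each $e$, the matroid $M\ba e$ is disconnected, so it has a separation $(X,Y)$ with $r(X)+r(Y)=r(M)$. Among all elements $e$ and all such separations, choose one with $|X|$ minimal. Since $M$ is connected, $X\cup\{e\}$ is not a separator of $M$.

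If $|X|=1$, say $X=\{f\}$, then $f$ is a coloop of $M\ba e$. Hence $\{e,f\}$ contains a cocircuit of $M$. Since $M$ is connected with at least three elements, $M$ has no coloops, so $\{e,f\}$ is itself a cocircuit.

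If $|X|\ge 2$, pick $f\in X$ and a separation $(X',Y')$ of $M\ba f$, labelled so that $e\in Y'$. Then apply submodularity of the connectivity function $\lambda$ to the pairs $X\cap X'$, $X\cup X'$ (and symmetrically $X\cap Y'$), working in the matroid $M\ba\{e,f\}$. The aim is to produce an exact separation of $M\ba e$ or of $M\ba f$ with a side strictly smaller than $X$, contradicting minimality. The bookkeeping is delicate: one must check that the uncrossed sets are nonempty and genuinely give separations after $e$ or $f$ is restored. So I would first try the version of this argument from Oxley's treatment of minimally $2$-connected matroids, Section 4.3, and adapt it as needed.
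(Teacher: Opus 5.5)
The paper gives no proof of this statement; it is quoted from Seymour. Taken on its own, your induction breaks at step (a). Your argument for $f\neq y$ is fine, but the case $f=y$ is a non sequitur. Since $x$ is a coloop of $M\ba y$, we have $(M/x)\ba y=(M\ba y)/x=M\ba\{x,y\}$, and the disconnectedness of $M\ba y$ says nothing about $M\ba\{x,y\}$. In fact (a) is false. Take $M=M(K_{2,3})$, which is minimally connected, and let $\{x,y\}$ be the two edges at a vertex of degree two. Then $M/x$ is the cycle matroid of $K_4$ minus an edge, and $y$ is the edge joining its two vertices of degree three. So $(M/x)\ba y$ is the cycle matroid of a $4$-cycle, namely $U_{3,4}$, which is connected. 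Hence $M/x$ is not minimally connected and the induction hypothesis cannot be invoked. Every element of $K_{2,3}$ lies in a $2$-cocircuit and all single-element contractions of $M(K_{2,3})$ are isomorphic, so no better choice of $x$ rescues the scheme.

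Separately, the lemma you flag as the main obstacle is left unproved, although it needs no submodularity. The cleanest repair drops the induction and uses a reformulation. The set $S$ of elements lying in $2$-cocircuits contains a cobasis if and only if $E(M)-S$ is independent, that is, if and only if every circuit $C$ meets $S$. Recall that $M$ has no loops or coloops.

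First, if $Z$ is a proper non-empty separator of $M\ba e$, then $e\notin\cl_M(Z)$, since otherwise $Z\cup\{e\}$ would separate $M$. Hence for $e\in C$, the set $C-e$ meets at least two components of $M\ba e$.

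Now choose $e\in C$ and a component $X$ of $M\ba e$ meeting $C$, with $|X|$ minimum. Suppose $|X|\geq 2$. Take $f\in X\cap C$ and a component $Y$ of $M\ba f$ that meets $C$ and avoids $e$. Since $Y\subseteq E(M\ba e)$, the set $X\cap Y$ is a separator of the connected matroid $M|Y$, so it is either $\emptyset$ or $Y$. If $X\cap Y=Y$, minimality of $|X|$ is contradicted. If $X\cap Y=\emptyset$, then $X\subseteq E(M)-Y$. Because $f\notin\cl_M(E(M)-f-Y)$, the element $f$ is a coloop of $M|(E(M)-Y)$ and hence of $M|X$, contradicting the connectivity of $M|X$.

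Therefore $X=\{f\}$, so $f$ is a coloop of $M\ba e$, and $\{e,f\}\subseteq C$ is a $2$-cocircuit of $M$. Thus every circuit contains a $2$-cocircuit, which gives the proposition directly.
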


Oxley \cite{Oxl81} proved a similar result for minimally $3$-connected matroids. 

\begin{proposition}
\label{triads}
Let $M$ be a minimally $3$-connected matroid. Then $M$ has a cobasis each element of which is in a $3$-element cocircuit. 
\end{proposition}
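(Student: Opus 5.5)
This is Oxley's theorem on minimally $3$-connected matroids. I would prove it by induction on $|E(M)|$, following the classical strategy of Oxley (and Halin's graph argument), with Tutte's Wheels-and-Whirls theorem and Bixby's lemma as the main tools.

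The target is a set $T$ of elements, each lying in a triad, such that $E(M)-T$ is independent, i.e.\ $T$ contains a cobasis. Equivalently, let $X$ be the set of elements of $M$ that lie in some $3$-element cocircuit. It suffices to show that $E(M)-X$ is independent in $M$. Then $E(M)-X$ extends to a basis $B$ whose complement $E(M)-B$ is a subset of $X$, and $E(M)-B$ is the required cobasis.

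\textbf{Step 1: reduce to showing $E(M)-X$ is independent.} Suppose $E(M)-X$ contains a circuit $C$, and take an element $e\in C$. Since $M$ is minimally $3$-connected, $M\backslash e$ is not $3$-connected. If $M$ is a wheel or whirl, every spoke and rim element lies in a triad, so $X=E(M)$ and we are done. So assume $M$ is not of this form and $|E(M)|\ge 4$.

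\textbf{Step 2: apply Bixby's lemma.} Bixby's lemma gives that either $\mathrm{si}(M/e)$ or $\mathrm{co}(M\backslash e)$ is $3$-connected. Since $e$ is not in a triad, a $2$-separation $(A,B)$ of $M\backslash e$ with both sides having at least two elements must be exact. I would examine such a separation and use the standard fact (Oxley) that in a minimally $3$-connected matroid, for every element $e$, the matroid $M\backslash e$ has a $2$-separation in which one side is a series class or contains a triad of $M$ meeting it. The key claim to prove is: for each $e$, there is a triad of $M$ meeting a circuit through $e$ in a controlled way. More usefully, every circuit of $M$ meets at least two triads' elements. Indeed, for a $2$-separation $(A,B)$ of $M\backslash e$, each of $A$ and $B$ contains an element of $X$: taking a minimal side under a suitable $2$-separation, one shows that side contains a triad of $M$ (the usual "fan/end" argument, using minimality on the elements of that side).

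\textbf{Step 3: derive the contradiction.} With the claim from Step 2, take the circuit $C\subseteq E(M)-X$ and $e\in C$. Then $C-e$ lies in $M\backslash e$ and must cross the $2$-separation. The minimal side $A$ then contains a triad $T^*$ of $M$. Orthogonality between $C$ and $T^*$ forces $|C\cap T^*|\ne 1$. Choosing the separation so that $C$ meets $A$ appropriately yields an element of $C$ inside $T^*\subseteq X$, which is a contradiction.

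\textbf{Main obstacle.} The hardest step is the structural claim in Step 2: that a minimal side of a $2$-separation of $M\backslash e$ contains a triad of $M$. This is the heart of Oxley's proof. I would first try to establish it by taking $(A,B)$ with $|A|$ minimal and choosing $f\in A$. Minimality of $M$ makes $M\backslash f$ non-$3$-connected, and uncrossing the two $2$-separations via submodularity of the connectivity function $\lambda$ would force $A$ to be small. From there I would check directly that $A\cup e$ contains a triad.
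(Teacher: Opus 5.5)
The paper gives no proof of this proposition; it quotes it from Oxley's 1981 paper. Your sketch therefore has to stand on its own, and as written it does not.

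Your reduction is sound. It suffices to show that every circuit $C$ of $M$ contains an element lying in a triad, because then $E(M)-X$ is independent and the complement of a basis containing it is the required cobasis. You do silently assume $|E(M)|\ge 4$, and that hypothesis is genuinely needed: $U_{2,3}$ is minimally $3$-connected and all of its cocircuits have two elements.

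The gap is the passage from $2$-separations back to $C$. In Step~3 you correctly note that $C$ meets both sides of a $2$-separation $(A,B)$ of $M\ba e$, since $3$-connectivity of $M$ forces $e\notin\cl(A)\cup\cl(B)$. But even granting that the minimal side $A$ contains a triad $T^*$, nothing makes $C$ meet $T^*$. The set $C\cap A$ may lie entirely in $A-T^*$, and then orthogonality gives nothing. ``Choosing the separation so that $C$ meets $A$ appropriately'' is exactly the statement to be proved. The ``Indeed'' in Step~2 has the same defect: knowing that each side of a $2$-separation contains an element of $X$ says nothing about $C\cap X$.

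Bixby's lemma and the wheels-and-whirls case play no role. The remark about exactness is also beside the point. What your contradiction hypothesis actually gives is this: for every $x\in C$, each side of every $2$-separation of $M\ba x$ has at least three elements. Indeed, a $2$-element side would be a series pair of $M\ba x$, and together with $x$ it would form a triad of $M$.

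To close the gap, the minimality has to be tied to $C$, and the crossing argument has to be carried out. One way that works is as follows.
\begin{enumerate}[label=(\arabic*)]
\item Choose $e\in C$ and a $2$-separation $(A,B)$ of $M\ba e$ with $|A|$ minimum over \emph{all} elements of $C$ and all such separations.
\item Pick $f\in C\cap A$, and a $2$-separation $(A',B')$ of $M\ba f$ with $e\in B'$ and $|A'|$ minimum.
\item Then $f\in\cl(A-f)$; otherwise $A-f$ would be a smaller side for $M\ba e$. Since $f\notin\cl(B')$, this gives $A\cap A'\neq\emptyset$.
\item Apply submodularity of the connectivity function to the sets $A\cap A'$, $A\cap B'$, $B\cap A'$ and $B\cap B'$. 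Each alternative either produces a smaller side (contradicting one of the two minimal choices) or forces $|A\cap A'|=1$ and $|A|=3$.
\item In that small case, write $A=\{f,p,q\}$ with $A\cap A'=\{p\}$. One finds either a triad through $e$ or $f$, or a $4$-element cocircuit $\{e,f,p,q\}$. The cocircuit makes $p$ a coloop of $M|A'$, so $A'-p$ is a smaller side for $M\ba f$.
\end{enumerate}

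None of this appears in your proposal, which defers it to ``the usual fan/end argument.'' This crossing analysis is where all the work lies, so what you have is a plan rather than a proof.
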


Using the slicing decomposition, we note the following as a consequence of Propositions \ref{seymour_2-cocircuits} and \ref{triads}.  

\begin{proposition}
\label{minimal_connected_degenerate}
Let $M$ be a minimally connected matroid. Then $M$ is $2$-degenerate. Furthermore, if $M$ is minimally $3$-connected, then $M$ is $3$-degenerate. 
\end{proposition}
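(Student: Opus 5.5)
We prove that every restriction of $M$ has a cocircuit of size at most $2$ (respectively $3$), which is exactly the definition of $2$-degeneracy (respectively $3$-degeneracy). We argue by induction on $|E(M)|$, using Propositions~\ref{seymour_2-cocircuits} and \ref{triads} to peel off small cocircuits. Equivalently, we build a slicing as in Proposition~\ref{matroid_equivalence}(iii), where each removed set is a cocircuit of size at most $k$ of the current restriction.

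\emph{Case $k=2$.} By Proposition~\ref{seymour_2-cocircuits}, a minimally connected $M$ with at least two elements has a $2$-element cocircuit $\{e,f\}$. Here $U_{1,1}$ is excluded, since it is a coloop, which has a $1$-element cocircuit and so is trivially fine. Now let $A$ be a non-empty subset of $E(M)$.
\begin{itemize}
\item If $A$ meets $\{e,f\}$, then $\{e,f\}\cap A$ is a non-empty union of cocircuits of $M|A$. It therefore contains a cocircuit of $M|A$ of size at most $2$.
\item If $A\subseteq E-\{e,f\}$, we reduce to a smaller minimally connected matroid. Since $\{e,f\}$ is a series pair, $M\ba e$ has $f$ as a coloop. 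Hence $M\ba e\ba f = M\ba e/f$ decomposes as a direct sum whose components should each be minimally connected.
\end{itemize}
Checking this last claim is the main obstacle. Contracting one element of a series pair preserves connectivity, so $M/f$ is connected and $e,f$ lie in series. Rather than track connectivity, it is cleaner to use a different reduction. Restricting to $A$ is the same as restricting to $A$ within $M\ba\{e,f\}$, and each connected component $N$ of $M|A$ is a restriction of some connected component of $M\ba\{e,f\}$. So it suffices to show that every connected component of $M\ba\{e,f\}$ is either minimally connected or a coloop.

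If this fails, I would instead induct directly on restrictions. Take a component $N$ of $M|A$ and aim to show that $N$ has a cocircuit of size at most $2$. If $N$ is not connected-minimal, delete non-essential elements until it is minimally connected. A cocircuit $C^*$ of the resulting $N\ba X$ extends to a cocircuit of $N$ only by adding elements of $X$, so this could enlarge it. That is problematic, so the first approach is preferred.

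\emph{Making the first approach work.} We use the standard fact that a restriction of a minimally connected matroid has every component minimally connected. The reason is that an element $x$ of a component $N$ with $N\ba x$ connected would be contained in a circuit of $N\ba x$ together with a circuit through $x$. This would show that $x$ is not essential in $M$, contradicting the minimality of $M$. I would verify this via the circuit characterization: $M$ is minimally connected if and only if every element $x$ has $M\ba x$ disconnected. Then, for a restriction, apply Proposition~\ref{seymour_2-cocircuits} to each non-trivial component. A cocircuit of a component is a cocircuit of $M|A$, which finishes the proof for $k=2$.

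\emph{Case $k=3$.} The analogous heredity fails for $3$-connectivity. Instead we use that a minimally $3$-connected matroid is in particular simple and cosimple with enough triads. I would attempt an induction: a triad $T$ from Proposition~\ref{triads} handles every restriction meeting $T$. For a restriction avoiding $T$, I would pass to the $3$-connected components of a minimal $3$-connected matroid obtained via Tutte decomposition. This step is the hardest, and I expect to need a known result that the $3$-connected minors obtained this way remain minimally $3$-connected, or else a direct edge-count argument.
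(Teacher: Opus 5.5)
You handle $k=2$ correctly in the end. The heredity fact you invoke is true: if a component $N$ of $M|A$ with $|E(N)|\ge 2$ had $N\ba x$ connected, then $x\in\cl(E(N)-x)$ and $E(N)-x$ lies in one component of $M\ba x$, so every other component of $M\ba x$ would be a separator of $M$. Your one-line justification of it, however, is not an argument.

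The genuine gap is $k=3$, which you do not prove. You observe that a single triad $T$ handles the restrictions meeting $T$. For the remaining restrictions you only propose passing to the $3$-connected pieces of a Tutte decomposition, conditional on an unproved claim that these pieces stay minimally $3$-connected. Even granting that claim, the pieces are minors carrying basepoints, not restrictions of $M$. A triad of a piece that uses a basepoint only yields a larger cocircuit of $M|A$. So as written this part is a plan, not a proof.

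The missing idea is to use the full conclusion of Propositions~\ref{seymour_2-cocircuits} and~\ref{triads}: the small cocircuits cover a whole cobasis, not just one element. Let $B^*$ be a cobasis each of whose elements lies in a cocircuit of $M$ of size at most $k$, where $k\in\{2,3\}$, and let $A\subseteq E(M)$ be non-empty.
\begin{enumerate}[label=(\alph*)]
\item If $A$ meets $B^*$, it meets one of these cocircuits $T$. Then $T\cap A$ is a non-empty union of cocircuits of $M|A$, so $M|A$ has a cocircuit of size at most $k$.
\item If $A$ avoids $B^*$, then $A$ lies in the basis $E(M)-B^*$. So $M|A$ is free, and each element of $A$ is a coloop of $M|A$, i.e.\ a $1$-element cocircuit.
\end{enumerate}
In slicing terms, you peel off the small cocircuits covering $B^*$, then peel off the remaining independent set one coloop at a time. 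This is the content of the paper's remark that the proposition follows from those two results via slicing. It handles $k=2$ and $k=3$ uniformly and makes both your heredity lemma and the Tutte decomposition unnecessary.
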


A straightforward consequence of Proposition~\ref{prop: size_bound} is that minimally connected and minimally $3$-connected matroids have at most $2r-1$ and $3r-2$ elements, respectively. Note that, for sufficiently large $r$, these are weaker than the tight bounds of $2r-2$ and $3r-6$ proved by Murty \cite{murty_74} and Oxley \cite{Oxl81}, respectively. 

We turn now to the class of matroids $M$ such that both $M$ and $M^*$ are $k$-degenerate. We call such matroids \textit{bi-$k$-degenerate}. We omit the elementary proof of the following.

\begin{proposition}
\label{prop: bi-k-degen-char}
The following are equivalent for a matroid $M$:
\begin{enumerate}[label=(\roman*)]
    \item $M$ is bi-$k$-degenerate.
    \item For every proper subset $T$ of $E(M)$, the deletion $M \ba T$ has a cocircuit of size at most $k$, and the contraction $M / T$ has a circuit of size at most $k$. 
    \item The ground set $E(M)$ can be reduced to the empty set by successively deleting a cocircuit of size at most $k$, and also by successively contracting a circuit of size at most $k$. 
\end{enumerate}
\end{proposition}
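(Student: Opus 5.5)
The plan is to prove the cycle (i) $\Rightarrow$ (ii) $\Rightarrow$ (iii) $\Rightarrow$ (i), using Proposition~\ref{matroid_equivalence} for $M$ and for $M^*$ and translating between $M$ and $M^*$ via the standard duality identities. For any $T\subseteq E(M)$ we have $(M\ba T)^* = M^*/T$, and $M^*\ba T = (M/T)^*$. Hence a circuit of $M/T$ is exactly a cocircuit of $M^*\ba T$, and restrictions of $M^*$ are exactly the duals of contractions of $M$. With these identities, $k$-degeneracy of $M^*$ says: for every proper subset $T$ of $E(M)$, the contraction $M/T$ has a circuit of size at most $k$. (Here non-emptiness of restrictions corresponds to $T$ being proper.)

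For (i) $\Rightarrow$ (ii), the restriction of $M$ to $E(M)-T$ is $M\ba T$, so $k$-degeneracy of $M$ directly gives a cocircuit of size at most $k$ in $M\ba T$. The contraction statement is the dual translation above. One point needs care. The paper's convention is that matroids are loopless, but $M/T$ may contain loops, which are circuits of size $1\le k$; equivalently, $M^*\ba T$ may contain coloops, which are cocircuits of size $1$. So the dual statement still holds, possibly for a trivial reason. I would note explicitly that the definition of $k$-degenerate is applied to $M^*$ without requiring $M^*$ to be loopless. The same remark covers any coloops of $M$, which are loops of $M^*$.

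For (ii) $\Rightarrow$ (iii), I would proceed greedily. Start with $E(M)$ and, while the remaining set $E(M)-T$ is non-empty, delete a cocircuit of $M\ba T$ of size at most $k$. Cocircuits are non-empty, so the process strictly shrinks the set and terminates at $\emptyset$. The contraction sequence is built symmetrically: repeatedly contract a circuit of $M/T$ of size at most $k$. Here we do not need to track flats; we only need the sets removed to be cocircuits (respectively circuits) of the current minor.

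For (iii) $\Rightarrow$ (i), I would mimic the proof of Proposition~\ref{matroid_equivalence}. Given the deletion sequence $C^*_0,C^*_1,\dots$, with remaining sets $E_0=E(M)\supsetneq E_1\supsetneq\cdots$, take a non-empty $A\subseteq E(M)$. Choose $j$ with $A\subseteq E_j$ but $A\not\subseteq E_{j+1}$. Then $C^*_j\cap A$ is non-empty and, since $C^*_j$ is a cocircuit of $M|E_j$ and $A\subseteq E_j$, the intersection is a union of cocircuits of $M|A$. It therefore contains a cocircuit of $M|A$ of size at most $k$. This shows $M$ is $k$-degenerate. Applying the same argument to the contraction sequence gives $k$-degeneracy of $M^*$, since that sequence is a deletion sequence of cocircuits in $M^*$ by the duality identities.

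The only mild obstacle is the bookkeeping around loops and coloops and the non-emptiness convention. I expect no substantive difficulty, which is consistent with the paper calling the proof elementary.
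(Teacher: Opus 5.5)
Your proof is correct. The paper omits this proof as elementary, and your cycle (i)$\Rightarrow$(ii)$\Rightarrow$(iii)$\Rightarrow$(i) is the natural intended argument: (iii)$\Rightarrow$(i) repeats the proof of Proposition~\ref{matroid_equivalence}, and the contraction half is moved to $M^*$ via $(M/T)^*=M^*\ba T$. Your loop bookkeeping is harmless but unneeded, because requiring each single-element restriction $M^*|\{x\}$ to have a cocircuit already forces $M^*$ to be loopless; all three conditions therefore fail together when $M$ has a coloop.
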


We use the following lemma to show that maximal $k$-degenerate matroids are bi-$k$-degenerate.

\begin{lemma}
\label{adjoin_cocircuits}
For $t \geq 2$, let $M$ be a matroid and let $C^*$ be a cocircuit of $M$ with $|C^*| \ge 2$. If the dual of $M \ba C^*$ is $t$-degenerate, then the dual of $M$ is $t$-degenerate. 
\end{lemma}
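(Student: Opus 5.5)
The plan is to unwind the definition of $t$-degeneracy for $M^*$ and check it restriction by restriction, splitting on how a subset $A$ meets $C^*$. Note that $C^*$ is a circuit of $M^*$ and $(M\ba C^*)^* = M^*/C^*$. So the hypothesis says that every non-empty restriction of $M^*/C^*$ has a cocircuit of size at most $t$. We must show that for every non-empty $A\subseteq E(M)$, the restriction $M^*|A$ has a cocircuit of size at most $t$.

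\emph{Case 1: $A\not\subseteq C^*$.} Put $B=A-C^*\neq\emptyset$. Since $C^*$ is a circuit of $M^*$, it lies inside $A\cup C^*$, and
\[
(M^*/C^*)|B=(M^*|(A\cup C^*))/C^*.
\]
By hypothesis this matroid has a cocircuit $K\subseteq B$ with $|K|\le t$. A cocircuit of a contraction $N/C^*$ is a cocircuit of $N$ avoiding $C^*$, so $K$ is a cocircuit of $M^*|(A\cup C^*)$. Next we restrict to $A$, which contains $K$. A cocircuit of $N$ meeting $X$ contains a cocircuit of $N|X$, so $K\cap A=K$ contains a cocircuit of $M^*|A$. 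That cocircuit has size at most $|K|\le t$.

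\emph{Case 2: $A\subseteq C^*$.}
\begin{itemize}
\item If $A=C^*$, then $M^*|A$ is a circuit with $|C^*|\ge 2$ elements. Its cocircuits are exactly its $2$-element subsets, so it has a cocircuit of size $2\le t$. This is where both $|C^*|\ge 2$ and $t\ge 2$ are used: if $|C^*|=1$, the element would be a loop of $M^*$ and $M^*|A$ would have no cocircuit.
\item If $A\subsetneq C^*$, then $A$ is independent in $M^*$. Hence every element of $A$ is a coloop of $M^*|A$, giving a cocircuit of size $1\le t$.
\end{itemize}
Combining the two cases, every non-empty restriction of $M^*$ has a cocircuit of size at most $t$, so $M^*$ is $t$-degenerate.

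The only step needing care is Case 1. It relies on two standard facts: cocircuits of a contraction are exactly the cocircuits of the original matroid that avoid the contracted set, and the trace of a cocircuit on a restriction contains a cocircuit of that restriction whenever the trace is non-empty. Case 1 also depends on the identity $(M\ba C^*)^*=M^*/C^*$, which converts the hypothesis into a statement about $M^*$. I expect no serious obstacle beyond verifying these facts.
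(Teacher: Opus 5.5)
Your proof is correct, but it takes a different route from the paper's.

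The paper uses the slicing characterization of Proposition~\ref{matroid_equivalence}(iii) in dual form. It takes a sequence of contractions of circuits of size at most $t$ that empties $E(M\ba C^*)$ and performs the same contractions in $M$. It then observes that what remains, $M/E(M\ba C^*)$, is $U_{1,|C^*|}$, because $C^*$ is still a cocircuit. It finishes by contracting a parallel pair and then the resulting loops.

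You instead verify the definition directly, one restriction $M^*|A$ at a time. When $A\not\subseteq C^*$, you pull a small cocircuit of $(M^*/C^*)|(A-C^*)$ back to $M^*|A$. When $A\subseteq C^*$, you use that $M^*|A$ is a circuit or a free matroid. In effect you inline the (iii)$\Rightarrow$(i) direction of Proposition~\ref{matroid_equivalence} for this particular decomposition.

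Given the slicing machinery, the paper's version is shorter. It also produces an explicit reduction sequence, which is what the iterated adjoining in Corollary~\ref{size_bound_bi_k_degenerate} and Proposition~\ref{dual_3_degenerate} relies on. Your version is self-contained. It does not need to check that the lifted contraction sequence is valid in $M$, or that Proposition~\ref{matroid_equivalence} transfers to $M^*$. It also shows clearly where $|C^*|\ge 2$ and $t\ge 2$ are used: in the restriction $M^*|C^*$, whose smallest cocircuits have size $2$, and in excluding a loop of $M^*$.
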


\begin{proof}
Let $N = M \ba C^*$. Since $N^*$ is $t$-degenerate, the ground set of $N$ can be reduced to the empty set via a sequence of contractions of circuits of size at most $t$.

Performing this exact sequence of contractions on $M$ eliminates all elements of $E(N)$, leaving the minor $M / E(N)$ whose ground set is $C^*$. Note that a set is a cocircuit of a contraction minor if and only if it is a cocircuit of the original matroid disjoint from the contracted set. Because $C^*$ is a cocircuit of $M$ disjoint from $E(N)$, it follows that $C^*$ is a cocircuit of $M/E(N)$. Therefore, $M / E(N) \cong U_{1, |C^*|}$. Since $|C^*| \ge 2$, the ground set of $U_{1, |C^*|}$ can be reduced to the empty set by successively contracting circuits of size at most two. The result now follows.
\end{proof}

We say $M$ is obtained from $N$ by \textit{adjoining a cocircuit} $C^*$ if $N = M \ba C^*$.

\begin{corollary}
\label{size_bound_bi_k_degenerate}
For $k \geq 2$, every general maximal $k$-degenerate matroid is bi-$k$-degenerate. For $k \geq 3$, every simple maximal $k$-degenerate matroid of rank at least two is bi-$k$-degenerate. 
\end{corollary}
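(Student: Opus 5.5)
We argue by induction along a slicing, using Lemma~\ref{adjoin_cocircuits} to add the cocircuits back one at a time. By definition, a maximal $k$-degenerate matroid is already $k$-degenerate. So it only remains to show that $M^*$ is $k$-degenerate.

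\textbf{General case.} Let $M$ be general maximal $k$-degenerate of rank $r$, and fix a slicing $C^*_0,\ldots,C^*_{r-1}$ with flats $F_0,\ldots,F_r$. By the proof of Proposition~\ref{prop: maximal_non_simple}, every set in this slicing has size exactly $k \geq 2$. We show by downward induction on $i$ that $(M|F_i)^*$ is $k$-degenerate.
\begin{itemize}
\item[] \emph{Base case.} For $i=r-1$, the restriction $M|F_{r-1}$ is a rank-one matroid on $k$ elements, namely $U_{1,k}$. Its dual $U_{k-1,k}$ has a single circuit, of size $k$. Contracting this circuit empties the ground set, so the dual is $k$-degenerate by Proposition~\ref{prop: bi-k-degen-char}(iii).
\item[] \emph{Inductive step.} For $i<r-1$, the set $C^*_i$ is a cocircuit of $M|F_i$ with $|C^*_i|=k\ge 2$, and $(M|F_i)\ba C^*_i = M|F_{i+1}$. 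By induction, $(M|F_{i+1})^*$ is $k$-degenerate. Lemma~\ref{adjoin_cocircuits}, applied with $t=k$, then gives that $(M|F_i)^*$ is $k$-degenerate.
\end{itemize}
At $i=0$ we obtain that $M^*$ is $k$-degenerate, so $M$ is bi-$k$-degenerate.

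\textbf{Simple case.} Let $M$ be simple maximal $k$-degenerate with $k\ge 3$ and rank $r\ge 2$. By Proposition~\ref{prop: maximal_simple}, any slicing has $|C^*_{r-1}|=1$ and $|C^*_i|=k$ for all $i\le r-2$. The base case can no longer be the rank-one flat, because $U_{1,1}$ is a coloop and its dual $U_{1,1}^*=U_{0,1}$ is a loop, which is not allowed.

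We therefore start the induction at $i=r-2$. The flat $F_{r-2}$ has rank two and consists of the $k$ elements of $C^*_{r-2}$ together with $e$, so $|F_{r-2}|=k+1$. Since $M$ is simple, $M|F_{r-2}\cong U_{2,k+1}$. Its dual is $U_{k-1,k+1}$, whose circuits have size $k$. Contracting one circuit leaves a single element $f$. That element is a loop of the contraction, i.e.\ a circuit of size $1\le k$, so contracting it empties the ground set. By Proposition~\ref{prop: bi-k-degen-char}(iii), $U_{k-1,k+1}$ is $k$-degenerate. (Alternatively, check directly that each restriction of $U_{k-1,k+1}$ is either free, with cocircuits of size one, or the whole matroid, with cogirth three.) This is where $k\ge3$ is used: for $k=2$ the base is $U_{2,3}$, whose dual is again $U_{1,3}$ of cogirth $3>2$.

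The inductive steps for $i\le r-3$ are then identical to the general case, since each $C^*_i$ has size $k\ge 2$.

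The only point needing care is this rank-two base case in the simple setting: we must verify that $U_{2,k+1}^*$ is $k$-degenerate and explain why the hypothesis $k\ge3$ is needed there. Everything else follows directly from Lemma~\ref{adjoin_cocircuits}.
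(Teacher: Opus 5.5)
Your overall structure matches the paper's proof. You take $U_{1,k}$ (general case) or $U_{2,k+1}$ (simple case) as the base, then add back the size-$k$ cocircuits of a slicing one at a time using Lemma~\ref{adjoin_cocircuits} with $t=k$. Those inductive steps are fine. The flaw is in your base cases, where you apply Proposition~\ref{prop: bi-k-degen-char}(iii) in the wrong direction. Emptying $E(N)$ by successively contracting circuits of size at most $k$ (circuits of $N$, then of its contractions) certifies that $N^*$ is $k$-degenerate. This is because $N^*\backslash X=(N/X)^*$, and the cocircuits of $(N/X)^*$ are the circuits of $N/X$. You instead contract circuits of the duals: the unique $k$-element circuit of $U_{k-1,k}$, and a $k$-element circuit of $U_{k-1,k+1}$ followed by a loop. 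Those computations only show that $U_{1,k}$ and $U_{2,k+1}$ are $k$-degenerate, which you already know. A symptom of the error is that your main argument for $U_{k-1,k+1}$ never uses $k\ge 3$. Run verbatim with $k=2$, it would prove that $U_{1,3}$ is $2$-degenerate. That contradicts your own correct remark that $U_{1,3}$ has cogirth $3$.

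The repair is to contract circuits of the base matroids themselves.
\begin{enumerate}
\item[] In $U_{1,k}$, contracting a $2$-element circuit turns the remaining $k-2$ elements into loops, which you then contract. So $U_{k-1,k}$ is $2$-degenerate.
\item[] In $U_{2,k+1}$, every circuit has size $3$, and contracting one again leaves only loops. So $U_{k-1,k+1}$ is $3$-degenerate, hence $k$-degenerate once $k\ge 3$. It is not $2$-degenerate, since its cogirth is $3$.
\end{enumerate}
This is where the hypothesis $k\ge 3$ genuinely enters. Your parenthetical direct check of $U_{k-1,k+1}$ essentially does this correctly. However, it overlooks the $k$-element restrictions, which are copies of $U_{k-1,k}$: not free, but of cogirth $2$, so they cause no harm. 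With the base cases verified this way, your argument coincides with the paper's.
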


\begin{proof}
For $k \geq 2$, the unique general maximal $k$-degenerate matroid of rank one, $U_{1,k}$, is bi-$k$-degenerate. Since every general maximal $k$-degenerate matroid is obtained from $U_{1,k}$ by successively adjoining cocircuits of size $k$, the result for general maximal matroids now follows by repeatedly applying Lemma~\ref{adjoin_cocircuits} with $t=k$. 

For $k \geq 3$, the unique simple maximal $k$-degenerate matroid of rank two is $U_{2,k+1}$, which is bi-$k$-degenerate. Therefore the result similarly follows by Lemma~\ref{adjoin_cocircuits} with $t=k$.
\end{proof}

It follows from Corollary~\ref{size_bound_bi_k_degenerate} that the dual of a general maximal $2$-degenerate matroid is $2$-degenerate, and therefore $3$-degenerate. We prove the following strengthening of Corollary~\ref{size_bound_bi_k_degenerate}.

\begin{proposition}
\label{dual_3_degenerate}
For $k \geq 3$, let $M$ be a general maximal $k$-degenerate matroid of rank at least one, or a simple maximal $k$-degenerate matroid of rank at least two. Then the dual of $M$ is $3$-degenerate. 
\end{proposition}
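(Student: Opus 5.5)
We will follow the inductive scheme of Lemma~\ref{adjoin_cocircuits}, but strengthen the base step and the adjoining step so that only circuits of size at most $3$ are ever contracted. By Proposition~\ref{prop: bi-k-degen-char}(iii) applied to $M^*$, $M^*$ is $3$-degenerate if and only if $E(M)$ can be reduced to the empty set by successively contracting circuits of $M$ of size at most $3$. We also use the equivalent form: every contraction $M/T$ with $T \neq E(M)$ has a circuit of size at most $3$.

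\emph{Base cases.} In the general case the rank-one matroid is $U_{1,k}$, whose circuits have size $2$; contracting pairs of parallel elements empties the ground set. In the simple case the rank-two matroid is $U_{2,k+1}$ with $k \ge 3$. It has triangles, and contracting one leaves $k-2$ loops. We must decide how loops are handled. I would treat a loop as a circuit of size $1$, which is at most $3$, so this suffices. Alternatively, we avoid loops by always contracting a circuit, so that the ground set shrinks by the whole circuit.

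\emph{Inductive step.} Take a slicing $C^*_0,\dots,C^*_{r-1}$. By Propositions~\ref{prop: maximal_non_simple} and~\ref{prop: maximal_simple}, every $C^*_i$ has size exactly $k$, except the last one in the simple case. Write $M = M_0$ and $M_i = M\ba(C^*_0\cup\dots\cup C^*_{i-1}) = M|F_i$. Then $M_i$ is obtained from $M_{i+1}$ by adjoining the cocircuit $C^*_i$. It therefore suffices to prove the following adjoining lemma: if $N = M\ba C^*$ with $C^*$ a cocircuit of $M$, $|C^*|\ge 2$, and $N^*$ is $3$-degenerate, then $M^*$ is $3$-degenerate. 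To prove it, we use a deletion order rather than a contraction order. We check that every restriction $M^*|A$ has a cocircuit of size at most $3$, that is, that every contraction $M/T$ with $T \ne E(M)$ has a circuit of size at most $3$.

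Let $X = E(M)-T$, so $X \neq \emptyset$, and consider two cases.
\begin{enumerate}
\item If $X\cap E(N)\neq\emptyset$, then $N/(T\cap E(N))$ is a contraction of $N$ keeping $X \cap E(N)$. By hypothesis it has a circuit $C$ with $|C|\le 3$. A circuit of $N/(T\cap E(N))$ is a circuit of $(M/(T\cap E(N)))\ba C^*$, hence a union of circuits of $M/(T \cap E(N))$ restricted there. Contracting further by $T\cap C^*$ only shrinks circuits, so $M/T$ has a circuit contained in $C$ (or a loop), which has size at most $3$.
\item If $X\subseteq C^*$, then $T\supseteq E(N)$. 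Since $C^*$ is a cocircuit, $M/E(N)\cong U_{1,|C^*|}$, and further contraction by part of $C^*$ leaves either parallel pairs or loops. Either way, we obtain a circuit of size at most $2$.
\end{enumerate}

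The main obstacle is the loop issue in case (1). When we contract $T\cap C^*$ in addition, a circuit $C$ of $N/(T\cap E(N))$ may become dependent in a way that contains a loop or a smaller circuit. This is harmless, since any smaller circuit also has size at most $3$, but we must verify that $C$ is still dependent and nonempty in $M/T$, i.e.\ that it does not vanish. This holds because $C \subseteq X$ and $C$ is a subset of the remaining ground set: a dependent set stays dependent under contraction of disjoint elements. So the key check is that dependence of $C$ in $(M/(T\cap E(N)))\ba C^*$ implies dependence in $M/T$, which is standard.

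With the adjoining lemma in hand, induction along the slicing from $M_{r-1}$ (or $M_{r-2} = U_{2,k+1}$ in the simple case) up to $M_0 = M$ gives that $M^*$ is $3$-degenerate. Note that this argument seems to give $2$-degeneracy in the general case. The requirement $k\ge 3$ and the bound $3$ are only needed because the simple base $U_{2,k+1}$ has smallest circuits of size $3$.
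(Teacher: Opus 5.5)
Your proposal is correct and is essentially the paper's proof: induction along the slicing from the base cases $U_{1,k}$ and $U_{2,k+1}$ (whose duals $U_{k-1,k}$ and $U_{k-1,k+1}$ are $3$-degenerate), where each step is exactly Lemma~\ref{adjoin_cocircuits} with $t=3$. The only difference is that you re-prove that lemma via the characterization ``every contraction $M/T$ with $T\neq E(M)$ has a circuit of size at most $3$'', splitting on whether $E(M)-T$ meets $E(N)$, whereas the paper transports $N$'s sequence of circuit contractions to $M$ and finishes on $M/E(N)\cong U_{1,|C^*|}$; the two arguments are equivalent and equally valid.
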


\begin{proof}
We proceed by induction on the rank $r$ of $M$. If $M$ is general maximal, the base case is rank $r=1$, where $M \cong U_{1,k}$. Its dual is $U_{k-1,k}$, which is $2$-degenerate (and therefore $3$-degenerate). If $M$ is simple maximal, the base case is $r=2$, where $M \cong U_{2,k+1}$. Its dual is $U_{k-1,k+1}$, which is $3$-degenerate.

Assume the result holds for rank $r-1$, and let $M$ be a maximal $k$-degenerate matroid (general or simple) of rank $r$. 
Let $C^*$ be the first set in a slicing of $M$. Then $|C^*|=k$ and $N = M \ba C^*$ is a maximal $k$-degenerate matroid of rank $r-1$. By the inductive hypothesis, $N^*$ is $3$-degenerate. Since $|C^*| = k \ge 2$, by Lemma~\ref{adjoin_cocircuits} with $t=3$, it immediately follows that $M^*$ is $3$-degenerate.
\end{proof}

\section{\texorpdfstring{Minimally $k$-degenerate of extremal size}{Minimally k-degenerate of extremal size}}\label{sec: simple}

In this section, we first prove Theorems~\ref{thm: simple min 3-degen} and \ref{thm: simple min k-degen}. Recall that for $k \geq 2$, a matroid $M$ is minimally $k$-degenerate if it has cogirth $k$ and every proper restriction of $M$ has cogirth less than $k$. Because the deletion of a single element can reduce the cogirth by at most one, this is equivalent to simply requiring that $M$ has cogirth at least $k$, while every proper restriction of $M$ has cogirth strictly less than $k$.

We begin with the following observation.

\begin{lemma}\label{lem: contraction is mkd}
    Suppose $M=M_1\oplus_2 M_2$ at basepoint $p$ is a minimally $k$-degenerate matroid for some $k\geq 2$ and $E(M_1)\geq 3$. If $g^*(M_2) \geq k$, then $M_1/p$ is minimally $k$-degenerate.
\end{lemma}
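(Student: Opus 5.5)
The plan is to verify the two defining conditions for $M_1/p$ directly. The tool is the standard description of cocircuits of a $2$-sum. If $M=M_1\oplus_2 M_2$ at basepoint $p$, then the cocircuits of $M$ are of three kinds: the cocircuits of $M_1$ avoiding $p$, the cocircuits of $M_2$ avoiding $p$, and the sets $(C_1^*-p)\cup(C_2^*-p)$, where $C_i^*$ is a cocircuit of $M_i$ containing $p$. Equivalently, $M=P(M_1,M_2)\ba p$, and one reads off the cocircuits of the parallel connection. For an element $e\in E(M_1)-p$ with $M_1\ba e$ still having at least two elements, which is where the hypothesis $|E(M_1)|\geq 3$ is used, the same description applies to $M\ba e=(M_1\ba e)\oplus_2 M_2$. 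The degenerate cases, where $p$ becomes a loop or a coloop of $M_1\ba e$, are treated separately below.

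First I show $g^*(M_1/p)\geq k$. The cocircuits of $M_1/p$ are exactly the cocircuits of $M_1$ avoiding $p$. By the description above, these are cocircuits of $M$, so each has size at least $g^*(M)=k$. I also need $M_1/p$ to be loopless. A loop $e$ of $M_1/p$ would be an element parallel to $p$ in $M_1$. In that case I would argue via minimality (deleting $e$ from $M$) that this is impossible, or note that $M\ba e$ keeps the same cocircuit structure up to relabelling. I expect this to be a minor side check.

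Next, I show that $(M_1/p)\ba e$ has cogirth at most $k-1$ for each $e\in E(M_1)-p$. It suffices to treat single-element deletions, since deleting one element lowers cogirth by at most one and further deletions cannot raise it, as the paper notes. By minimality of $M$, the matroid $M\ba e$ has a cocircuit $D$ with $|D|\leq k-1$. The key case analysis is on which type $D$ is.

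\begin{itemize}
\item If $D$ is a cocircuit of $M_2$ avoiding $p$, then $|D|\geq g^*(M_2)\geq k$, which is impossible.
\item If $D=(C_1^*-p)\cup(C_2^*-p)$ with $p\in C_2^*$, then $|C_2^*-p|\geq k-1$. Hence $|D|\geq k$ as soon as $C_1^*-p\neq\emptyset$, which is impossible.
\end{itemize}

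The remaining possibility is that $C_1^*=\{p\}$, that is, $p$ is a coloop of $M_1\ba e$. Then $\{e,p\}$ contains a cocircuit of $M_1$, so $e$ is a coloop of $M_1/p$. This makes $g^*(M_1/p)=1<k$, contradicting the first step, since $k\geq 2$.

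Hence $D$ is a cocircuit of $M_1\ba e$ avoiding $p$, and so $D$ is a cocircuit of $(M_1/p)\ba e$ of size at most $k-1$. I expect the main obstacle to be exactly this bookkeeping: making sure the $2$-sum cocircuit description remains valid after deleting $e$, including the degenerate coloop and loop situations.
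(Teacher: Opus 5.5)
\textbf{There is a genuine gap: the reduction to single-element deletions.} Your analysis of single-element deletions is correct, but the reduction that precedes it is not. You say it suffices to treat single-element deletions because further deletions ``cannot raise'' the cogirth. That monotonicity is false. The paper's remark only says that one deletion lowers cogirth by at most one, which lets one replace ``cogirth $k$'' by ``cogirth at least $k$'' in the definition. Deleting elements can increase cogirth, for instance by deleting a small cocircuit entirely.

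Concretely, $U_{1,k}\oplus U_{1,k}$ has cogirth $k$, and every single-element deletion has cogirth $k-1$. Yet its proper restriction $U_{1,k}$ has cogirth $k$, so it is not minimally $k$-degenerate. So proving $g^*((M_1/p)\ba e)\le k-1$ for each $e$ does not show that $M_1/p$ is minimally $k$-degenerate. You must handle an arbitrary nonempty $X\subseteq E(M_1)-\{p\}$ with $(M_1/p)\ba X$ nonempty.

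\textbf{What carries over to general $X$, and what does not.} If $p$ is not a coloop of $M_1\ba X$, your type analysis carries over verbatim. Then $M\ba X=(M_1\ba X)\oplus_2 M_2$, and a cocircuit of size at most $k-1$ can be neither a cocircuit of $M_2/p$ nor a mixed one. Hence it is a cocircuit of $(M_1/p)\ba X$.

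Your handling of the coloop case, however, does not generalize. If $p$ is a coloop of $M_1\ba X$, you only learn that $X$ contains a cocircuit of $M_1/p$, which contradicts nothing once $|X|\ge k$. The missing observation, which is exactly how the paper treats this case, is
\[
(M_1/p)\ba X=(M_1\ba X)\ba p=M|(E(M_1)-X-\{p\}).
\]
This is itself a nonempty proper restriction of $M$, since it misses $E(M_2)-\{p\}$. By the minimality of $M$ it therefore has cogirth at most $k-1$. With this added, your argument becomes the paper's proof.

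\textbf{The looplessness check.} Your hint about deleting $e$ from $M$ is vague. The clean argument is as follows. If $e$ is parallel to $p$ in $M_1$, then $M|((E(M_2)-\{p\})\cup\{e\})\cong M_2$, which is a restriction of cogirth at least $k$. It is proper because $|E(M_1)|\ge 3$, contradicting the minimality of $M$. This is the same device used in the proof of Lemma~\ref{lem: 2-sum_components_extremal}.
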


\begin{proof}
    Recall that (see, for example,~\cite[Proposition~7.1.20]{Oxl11})
    \begin{align*}
        \cC^*(M)
        ={}&\cC^*(M_1/p)\cup\cC^*(M_2/p)\\
        &\cup\bigl\{(C^*\cup D^*)\backslash\{p\}:
        p\in C^*\in\cC^*(M_1)\ \text{and}\
        p\in D^*\in\cC^*(M_2)\bigr\}.
    \end{align*}

Therefore, $g^*(M_1/p) \geq k$. Suppose that $N_1=(M_1/p)\ba X$ is a nonempty proper restriction of $M_1/p$ such that $g^*(N_1)\geq k$. If $p$ is a coloop of $M_1\ba X$, then $N_1=(M_1\ba X)\ba p$, which is a nonempty proper restriction of $M$, a contradiction. Therefore, $(M_1\ba X)\oplus_2 M_2$ is well-defined and, as a nonempty proper restriction of $M$, has a cocircuit of size less than $k$.

    The cocircuits of $(M_1\ba X)\oplus_2 M_2$ are the members of
    \begin{align*}
        &\cC^*(N_1)\cup\cC^*(M_2/p)\\
        &\quad\cup\bigl\{(C^*\cup D^*)\backslash\{p\}:
        p\in C^*\in\cC^*(M_1\ba X)\ \text{and}\
        p\in D^*\in\cC^*(M_2)\bigr\}.
    \end{align*}
    Every member of $\cC^*(N_1)$ has size at least $k$. Moreover, since every cocircuit of $M_2/p$ is a cocircuit of $M_2$ that avoids $p$, every member of $\cC^*(M_2/p)$ has size at least $k$.

    It remains to consider the mixed cocircuits. Choose $C^*\in \cC^*(M_1\ba X)$ and $D^*\in \cC^*(M_2)$ such that $p \in C^*$ and $p \in D^*$. Since $p$ is not a coloop of $M_1\ba X$, we have $|C^*\backslash\{p\}|\geq 1$. Since $g^*(M_2)\geq k$, we also have $|D^*\backslash\{p\}|\geq k-1$. Furthermore, because $M_1$ and $M_2$ intersect only at the basepoint $p$, the sets $C^*\backslash\{p\}$ and $D^*\backslash\{p\}$ are disjoint. Therefore,
    \[
        \bigl|(C^*\cup D^*)\backslash\{p\}\bigr| = |C^*\backslash\{p\}| + |D^*\backslash\{p\}| \geq 1+(k-1)=k.
    \]
    
    Thus, $(M_1\ba X)\oplus_2 M_2$ has no cocircuit of size less than $k$, a contradiction. Therefore, $M_1/p$ is minimally $k$-degenerate.
\end{proof}

Note that by Proposition~\ref{prop: size_bound}, it follows that a simple minimally $k$-degenerate matroid $M$ has size at most $(k-1)(r(M)-1)+2$.
We further note the following.

\begin{lemma}
\label{lem: 2-sum_components_extremal}
For $k \ge 3$, suppose $M = M_1 \oplus_2 M_2$ at basepoint $p$, where $M$ is a simple minimally $k$-degenerate matroid of extremal size $|E(M)| = (k-1)(r(M)-1)+2$. Then both $M_1$ and $M_2$ are simple minimally $k$-degenerate matroids of extremal size.
\end{lemma}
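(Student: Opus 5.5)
The plan is to obtain a size bound for each part from minimality of $M$, add the two bounds, and compare with the extremal size of $M$. Write $r_i = r(M_i)$ and $E_i = E(M_i)$. For a $2$-sum we have $r(M) = r_1 + r_2 - 1$ and $|E(M)| = |E_1| + |E_2| - 2$. The basepoint $p$ is neither a loop nor a coloop of $M_i$, so $r(M_i\ba p) = r_i$. Moreover $M_i \ba p$ is a proper restriction of $M$, because $E(M)-E_i\neq\emptyset$.

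\textbf{Step 1 (upper bound for each part).} Every nonempty restriction of $M_i\ba p$ is a proper restriction of $M$. By minimality of $M$ it therefore has cogirth at most $k-1$, so $M_i\ba p$ is $(k-1)$-degenerate. It is also simple, being a restriction of the simple matroid $M$. Proposition~\ref{prop: size_bound} then gives $|E_i| - 1 \le (k-1)(r_i - 1) + 1$, that is,
\[
|E_i| \le (k-1)(r_i-1)+2 .
\]
Summing over $i=1,2$ yields $|E(M)| \le (k-1)(r_1+r_2-2)+2 = (k-1)(r(M)-1)+2$. Since $M$ attains this value, both inequalities must be equalities. Hence each $M_i$ has the extremal size, and each $M_i\ba p$ is a simple maximal $(k-1)$-degenerate matroid by Proposition~\ref{prop: maximal_simple}. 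In particular, every slicing of $M_i\ba p$ has all sets of size exactly $k-1$ except the last, which is a single element.

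\textbf{Step 2 (simplicity of $M_i$).} $M_i\ba p$ is simple, so it suffices to rule out $p$ being parallel to some $e\in E_1$ (the case $i=2$ is symmetric). In that case $M \cong (M_1\ba e)\oplus_2 M_2$ with $e$ playing the role of $p$, so $M_2$, with $p$ relabelled $e$, is a proper restriction of $M$. I would then recount sizes: $|E(M)| = |E(M_1\ba e)| + |E_2| - 2$, and Step 1 applies to $M_1\ba e$, which has rank $r_1$. This gives $|E(M)| \le (k-1)(r(M)-1)+1$, contradicting extremality.

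\textbf{Step 3 (minimal $k$-degeneracy of $M_i$), the main obstacle.} I must show $g^*(M_i)\ge k$, and that every proper restriction of $M_i$ containing $p$ has cogirth at most $k-1$. Restrictions avoiding $p$ are handled by Step 1.

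For the cogirth bound, a cocircuit of $M_1$ avoiding $p$ is a cocircuit of $M$ by the description of $\cC^*(M)$ in the proof of Lemma~\ref{lem: contraction is mkd}, so it has size at least $k$. The real difficulty is a cocircuit $C^*\ni p$ of $M_1$ with $|C^*|\le k-1$. I would exploit the rigid structure from Step 1. Take a slicing of $M_2\ba p$ whose sets have size $k-1$. By working through the corresponding flats of $M_2$, I expect to find a cocircuit $D^*$ of $M_2$ with $p\in D^*$ and $|D^*|$ small, ideally $|D^*| \le k-1$. Then $(C^*\cup D^*)-\{p\}$ would be a cocircuit of $M$ of size at most $2k-4$. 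This is not yet a contradiction for large $k$, so I would first try to combine it with the extremal slicing of $M_1\ba p$. The aim is to show that $C^*-\{p\}$ can be taken to be a full slice of $M_1\ba p$ and that $D^*=\{p\}\cup\{\text{a single element}\}$ is impossible by simplicity. If that fails, I would instead apply Lemma~\ref{lem: contraction is mkd} to $M_1/p$ and transfer the structure back.

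For restrictions $M_1\ba X$ with $p\notin X$ and $X\neq\emptyset$, suppose $g^*(M_1\ba X)\ge k$. Then $(M_1\ba X)\oplus_2 M_2$ is a proper restriction of $M$. Using the cocircuit decomposition together with $g^*(M_2)\ge k$ (obtained by symmetry), I would show as in Lemma~\ref{lem: contraction is mkd} that all its cocircuits have size at least $k$, contradicting minimality of $M$. The circular dependence on $g^*(M_2)\ge k$ is why I expect Step 3 to be the hardest part.
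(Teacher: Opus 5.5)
\textbf{There is a genuine gap in Step 3.} Your Step 1 is correct and is a useful shortcut. It gives $|E(M_i)|=(k-1)(r(M_i)-1)+2$ and the maximality of $M_i\ba p$ at once, whereas the paper only extracts the sizes at the end. The problem is Step 3: the central claim $g^*(M_i)\ge k$ is never established.
\begin{itemize}
\item The route through a cocircuit $D^*\ni p$ of $M_2$ does not close. As you note, $2k-4$ is no contradiction. Also, simplicity rules out parallel pairs, not $2$-element cocircuits, so it does not exclude $D^*=\{p,q\}$.
\item The fallback via Lemma~\ref{lem: contraction is mkd} assumes $g^*(M_2)\ge k$.
\item Your argument for the minimality of $M_1$ assumes it too.
\end{itemize}
So the proposal is circular exactly where the lemma has its content.

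\textbf{The missing step is already implicit in your Step 1.} Since $M_1\ba p$ is simple and $(k-1)$-degenerate with exactly $(k-1)(r(M_1)-1)+1$ elements, it has no cocircuit $S$ with $|S|\le k-2$. Deleting such an $S$ would leave a simple $(k-1)$-degenerate restriction of rank $r(M_1)-1$. That restriction has at most $(k-1)(r(M_1)-2)+1$ elements, which makes the total at most $(k-1)(r(M_1)-1)$, a contradiction.

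Now let $C^*$ be any cocircuit of $M_1$. Because $p$ is not a coloop of $M_1$, the set $C^*-\{p\}$ is nonempty and contains a cocircuit of $M_1\ba p$, so $|C^*-\{p\}|\ge k-1$. Hence $|C^*|\ge k$ when $p\in C^*$. Cocircuits avoiding $p$ lie in $\cC^*(M_1/p)\subseteq\cC^*(M)$, so they also have size at least $k$. No $D^*$ is needed. This is your ``full slice'' hope made precise.

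The paper argues differently. It uses the count to show each simple $M_i$ is not $(k-1)$-degenerate, and takes restrictions $N_i\ni p$ with $g^*(N_i)\ge k$. Then $N_1\oplus_2N_2$ is a restriction of $M$ with cogirth at least $k$, so minimality forces $N_i=M_i$. Either way, once $g^*(M_1),g^*(M_2)\ge k$ is known, your last paragraph correctly finishes minimality.

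\textbf{The count in Step 2 is also wrong.} The matroid $(M_1\ba e)\oplus_2 M_2$ is $M\ba e$, not $M$. So $|E(M)|=|E(M_1\ba e)|+|E(M_2)|-1$, and your inequality gives no contradiction. Use your own observation instead. $M_2$ with $p$ relabelled $e$ is a proper restriction of $M$, hence simple and $(k-1)$-degenerate of rank $r(M_2)$. Proposition~\ref{prop: size_bound} then gives $|E(M_2)|\le (k-1)(r(M_2)-1)+1$, which contradicts the equality from Step 1.
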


\begin{proof}
Let
\[
    f(s)=(k-1)(s-1)+2.
\]
For each $i\in\{1,2\}$, the matroid $M_i\ba p$ is a restriction of $M$, and hence is simple and $(k-1)$-degenerate. Moreover, at least one of $M_1$ and $M_2$ is simple. Indeed, since $M_i\ba p$ is simple, every parallel pair of $M_i$ must contain $p$. If both $M_1$ and $M_2$ had an element parallel to $p$, then those two elements would be parallel in $M$, contradicting the simplicity of $M$. Without loss of generality, we may assume that $M_1$ is simple.

Suppose that $M_1$ is $(k-1)$-degenerate. By Proposition~\ref{prop: size_bound},
\[
    |E(M_1)|\leq (k-1)(r(M_1)-1)+1=f(r(M_1))-1.
\]
Since $p$ is not a coloop of $M_2$, we have $r(M_2\ba p)=r(M_2)$. Applying Proposition~\ref{prop: size_bound} to the simple $(k-1)$-degenerate matroid $M_2\ba p$ gives
\[
    |E(M_2)|-1\leq (k-1)(r(M_2)-1)+1,
\]
and hence $|E(M_2)|\leq f(r(M_2))$. Therefore,
\begin{align*}
    |E(M)|
    &=|E(M_1)|+|E(M_2)|-2\\
    &\leq f(r(M_1))+f(r(M_2))-3\\
    &=f(r(M))-1,
\end{align*}
contradicting $|E(M)|=f(r(M))$. Thus, $M_1$ is not $(k-1)$-degenerate. Consequently, $M_1$ has a restriction $N_1$ such that $g^*(N_1)\geq k$. Since $M_1\ba p$ is $(k-1)$-degenerate, we have $p\in E(N_1)$.

We next show that $M_2$ is simple. Suppose otherwise. Then there is an element $e\in E(M_2)-\{p\}$ that is parallel to $p$ in $M_2$. The matroid $M$ has a proper restriction isomorphic to the matroid obtained from $N_1$ by relabeling $p$ as $e$. This restriction has cogirth at least $k$, contradicting the minimal $k$-degeneracy of $M$. Therefore, $M_2$ is simple.

By symmetry, the preceding counting argument shows that $M_2$ is not $(k-1)$-degenerate. Hence $M_2$ has a restriction $N_2$ such that $g^*(N_2)\geq k$, and necessarily $p\in E(N_2)$. Since $g^*(N_i) \geq k$ for each $i \in \{1,2\}$, the basepoint $p$ is not a coloop of $N_i$. Therefore, $N_1\oplus_2 N_2$ is well-defined.

Recall that the cocircuits of $N_1\oplus_2 N_2$ are the members of
\begin{align*}
    &\cC^*(N_1/p)\cup\cC^*(N_2/p)\\
    &\quad\cup\bigl\{(C_1^*\cup C_2^*)\backslash\{p\}:
    p\in C_1^*\in\cC^*(N_1)\ \text{and}\
    p\in C_2^*\in\cC^*(N_2)\bigr\}.
\end{align*}
Every member of $\cC^*(N_1/p)\cup\cC^*(N_2/p)$ has size at least $k$. Moreover, if $p\in C_i^*\in\cC^*(N_i)$ for each $i\in\{1,2\}$, then
\[
    \bigl|(C_1^*\cup C_2^*)\backslash\{p\}\bigr|
    =|C_1^*\backslash\{p\}|+|C_2^*\backslash\{p\}|
    \geq 2k-2\geq k.
\]
Therefore, $g^*(N_1\oplus_2 N_2)\geq k$. Since $N_1\oplus_2 N_2$ is a restriction of $M$, the minimality of $M$ implies that $N_1=M_1$ and $N_2=M_2$. In particular, $M_1$ and $M_2$ are simple matroids with cogirth at least $k$.

We now show that $M_1$ is minimally $k$-degenerate. Suppose that $R_1$ is a nonempty proper restriction of $M_1$ such that $g^*(R_1)\geq k$. If $p\notin E(R_1)$, then $R_1$ is a proper restriction of $M$, a contradiction. Thus, $p\in E(R_1)$. Since $g^*(R_1)\geq k$, the element $p$ is not a coloop of $R_1$, so $R_1\oplus_2 M_2$ is well-defined. The cocircuits of $R_1\oplus_2 M_2$ are the members of
\begin{align*}
    &\cC^*(R_1/p)\cup\cC^*(M_2/p)\\
    &\quad\cup\bigl\{(C_1^*\cup C_2^*)\backslash\{p\}:
    p\in C_1^*\in\cC^*(R_1)\ \text{and}\
    p\in C_2^*\in\cC^*(M_2)\bigr\}.
\end{align*}
Every member of the first two families has size at least $k$, and every member of the third family has size at least $2k-2$. Therefore, $g^*(R_1\oplus_2 M_2)\geq k$. However, $R_1\oplus_2 M_2$ is a proper restriction of $M$, a contradiction. Therefore, $M_1$ is minimally $k$-degenerate. By symmetry, $M_2$ is also minimally $k$-degenerate.

Applying the size bound to $M_1$ and $M_2$ gives $|E(M_i)|\leq f(r(M_i))$ for each $i\in\{1,2\}$. Since
\[
    r(M)=r(M_1)+r(M_2)-1
\]
and
\[
    f(r(M))=f(r(M_1))+f(r(M_2))-2,
\]
we have
\begin{align*}
    f(r(M_1))+f(r(M_2))-2
    &=|E(M)|\\
    &=|E(M_1)|+|E(M_2)|-2.
\end{align*}
It follows that
\[
    |E(M_i)|=f(r(M_i))=(k-1)(r(M_i)-1)+2
\]
for each $i\in\{1,2\}$. Thus, both $M_1$ and $M_2$ are simple minimally $k$-degenerate matroids of extremal size.
\end{proof}

For a minimally $3$-degenerate matroid $M$, an element $e$ of $M$ is \textit{contractible} if $M/e$ is still minimally $3$-degenerate. We are now ready to prove Theorem~\ref{thm: simple min 3-degen}. 

\begin{proof}[Proof of Theorem~\ref{thm: simple min 3-degen}]
    The size bound is an immediate consequence of Proposition~\ref{prop: size_bound}. It remains to characterize the matroids for which equality holds. We proceed by induction on $|E(M)|$. The statement is readily verified when $|E(M)|\leq 4$. Now let $M$ be a simple, minimally $3$-degenerate matroid satisfying $|E(M)|=2r(M)$, and assume that the characterization holds for every simple, minimally $3$-degenerate matroid $N$ such that $|E(N)|<|E(M)|$ and $|E(N)|=2r(N)$.
    
    Because $M$ is minimally $3$-degenerate, it is connected. Otherwise a component of $M$ would be a proper nonempty restriction of $M$ with cogirth at least three. If $M$ is $3$-connected, then every restriction of $M$ with at least four elements is not $3$-connected. It follows from~\cite[Theorem 1.2]{sm3c_matroid} that $M$ is either a wheel or a whirl. So we may now assume that $M=M_1\oplus_2 M_2$, for some $M_1$ and $M_2$ such that $|E(M_1)|\geq 3$, $|E(M_2)|\geq 3$, and $E(M_1)\cap E(M_2)=\{p\}$. By Lemma~\ref{lem: 2-sum_components_extremal}, both $M_1$ and $M_2$ are simple minimally $3$-degenerate matroids of extremal size. Therefore, by induction, both $M_1$ and $M_2$ are matroids characterized in (\romannum{1})-(\romannum{3}).

    Further, by Lemma~\ref{lem: contraction is mkd}, $M_i/p$ is minimally $3$-degenerate for $i\in\{1,2\}$, that is, $p$ is contractible in both $M_1$ and $M_2$. A direct check shows that an extremal matroid has a contractible element only when it is a wheel or a whirl, and that the contractible elements of a wheel or whirl are precisely its spokes. It follows that each $M_i$ is a wheel or a whirl and that $p$ is a spoke of both. Thus $M$ is characterized as~(\romannum{3}).
\end{proof}

Next we prove Theorem~\ref{thm: simple min k-degen}. We use the following result.

\begin{lemma}
\label{lem: cocircuit_intersection}
For $k \geq 3$, let $M$ be a simple minimally $k$-degenerate matroid such that $r(M) \geq 3$ and $|E(M)| = (k-1)(r(M)-1)+2$. Suppose that $C^*$ and $D^*$ are distinct cocircuits of $M$ of size $k$. Then $|C^* \cap D^*| \leq 1$. 
\end{lemma}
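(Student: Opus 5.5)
Suppose, for a contradiction, that $C^*$ and $D^*$ are distinct cocircuits of $M$ of size $k$ with $|C^*\cap D^*|\geq 2$. The plan is to count elements along a well-chosen slicing and exceed the extremal size. By cocircuit elimination there is a cocircuit $E^*\subseteq (C^*\cup D^*)-\{x\}$ for some $x\in C^*\cap D^*$, but this alone does not give a small cocircuit, since the cogirth is $k$. Instead we use the union: the set $U=C^*\cup D^*$ has $|U|\leq 2k-2$. Its complement $F=E(M)-U$ is an intersection of two hyperplanes, so it is a flat. We will show $r(F)=r(M)-2$. Indeed $C^*$ and $D^*$ are distinct, so the hyperplanes $E-C^*$ and $E-D^*$ are distinct and their intersection has rank at most $r-2$. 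It also has rank at least $r-2$, because $E-D^*$ is a hyperplane and $C^*-D^*$ is a cocircuit-part of $M|(E-D^*)$. More precisely, $(E-D^*)\cap C^*$ is nonempty, since otherwise $C^*\subseteq D^*$. It therefore contains a cocircuit of $M|(E-D^*)$, and deleting it drops the rank by exactly one when we take the closure. This step requires care: we must check that $F$ is exactly a flat of corank one in the hyperplane $E-D^*$, which holds because $E-C^*$ and $E-D^*$ are modular as hyperplanes, giving $r(F)\ge r-2$.

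Next, $M|F$ is a proper restriction of $M$ of rank $r-2$. It is simple and, by minimality, $(k-1)$-degenerate: every restriction of $M|F$ is a proper restriction of $M$ and so has cogirth at most $k-1$. If $F$ is nonempty, which it is since $r\geq 3$ gives $r(F)\geq 1$, then Proposition~\ref{prop: size_bound} yields $|F|\leq (k-1)(r-3)+1$. Hence
\[
|E(M)| = |U|+|F| \leq (2k-2) + (k-1)(r-3)+1 = (k-1)(r-1)+1,
\]
contradicting $|E(M)|=(k-1)(r-1)+2$.

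The main obstacle is verifying the rank statement $r(F)=r(M)-2$ cleanly, and confirming that $|C^*\cup D^*|=2k-|C^*\cap D^*|\le 2k-2$ uses exactly the assumption $|C^*\cap D^*|\ge 2$. For the rank, we argue as follows. $F\subseteq E-D^*$, and $C^*\cap(E-D^*)\neq\emptyset$ is a union of cocircuits of $M|(E-D^*)$. Its removal therefore leaves a set of rank at most $r-2$, and it leaves a flat. Conversely, the intersection of two distinct hyperplanes in any matroid has rank at least $r-2$ by submodularity: $r(H_1)+r(H_2)\geq r(H_1\cup H_2)+r(H_1\cap H_2)$, so $r(H_1\cap H_2)\geq 2(r-1)-r=r-2$. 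This submodularity inequality is the clean justification we will use, and simplicity ensures the bound $k(r'-1)+1$ form of Proposition~\ref{prop: size_bound} applies with $k-1$ in place of $k$.
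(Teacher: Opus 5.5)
Your counting strategy works and gives a valid proof once one step is repaired, but the rank step as written is wrong. Submodularity gives $r(H_1)+r(H_2)\ge r(H_1\cup H_2)+r(H_1\cap H_2)$, that is, $r(H_1\cap H_2)\le 2(r-1)-r=r-2$. You have the inequality reversed. The lower bound $r(H_1\cap H_2)\ge r-2$ is false in general: in $U_{3,4}$ on $\{1,2,3,4\}$, the hyperplanes $\{1,2\}$ and $\{3,4\}$ are disjoint, so their intersection has rank $r-3$. In particular, two distinct hyperplanes need not form a modular pair. Your other argument also fails: $C^*-D^*$ may be a union of several cocircuits of $M|(E-D^*)$, and removing it can drop the rank by more than one. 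So $r(F)=r-2$ is not established, and your claim that $F$ is nonempty, which you derive from it, is unsupported.

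You only need the upper bound. If $F\neq\emptyset$, then $M|F$ is a simple $(k-1)$-degenerate matroid with $r(F)\le r-2$. Proposition~\ref{prop: size_bound} then gives $|F|\le (k-1)(r(F)-1)+1\le (k-1)(r-3)+1$, and your count goes through unchanged. That $F$ is nonempty should come from counting, not from rank: since $r\ge 3$, we have $|E(M)|=(k-1)(r-1)+2\ge 2k>2k-2\ge |C^*\cup D^*|$.

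With this fix, your argument is a slightly more direct version of the paper's. The paper deletes some $e\in C^*\cap D^*$ and uses that $M\ba e$ is simple maximal $(k-1)$-degenerate, so every slicing has all but its last set of size exactly $k-1$. It then exhibits a slicing that starts with $C^*-e$ and continues with a cocircuit inside $D^*-C^*$ of size at most $k-2$. That route needs no rank computation, because each slicing step lowers the rank by exactly one, which is precisely where your write-up went wrong. Your route avoids deleting an element and the slicing-rigidity observation by bounding $|E(M)|\le |C^*\cup D^*|+|F|$ directly.
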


\begin{proof}
Assume to the contrary that $\{e,f\} \subseteq C^* \cap D^*$. Observe that $M\ba e$ is a simple maximal $(k-1)$-degenerate matroid so each set in a slicing of $M\ba e$ except the last set has size exactly equal to $k-1$. We choose a slicing of $M\ba e$ such that $C^*-e$ is the first set. Since $D^*-C^*$ contains a cocircuit of $M \ba e \ba (C^*-e)= M\ba C^*$, the second set in the slicing can be chosen to be contained in $D^*-C^*$, which has size at most $k-2$. This is a contradiction because $r(M) \geq 3$ so the second set in the slicing is not the last set.
\end{proof}

\begin{proof}[Proof of Theorem~\ref{thm: simple min k-degen}]
The inequality follows from Proposition~\ref{prop: size_bound}. We characterize the matroids that attain equality by induction on $|E(M)|$. Suppose that
\[
    |E(M)|=(k-1)(r(M)-1)+2.
\]
If $r(M)=2$, then the simplicity of $M$ gives $M\cong U_{2,k+1}$. We may therefore assume that $r(M)\geq 3$.

We first consider the case in which $M$ is $3$-connected. We claim that every element of $M$ is contained in a cocircuit of size $k$. Let $e\in E(M)$. Since every nonempty restriction of $M\ba e$ is a proper restriction of $M$, the matroid $M\ba e$ is $(k-1)$-degenerate. Moreover, deletion reduces cogirth by at most one, so $g^*(M\ba e)=k-1$. Let $C_e^*$ be a cocircuit of $M\ba e$ of size $k-1$. There is a cocircuit $C^*$ of $M$ such that $C^*\backslash\{e\}=C_e^*$. Since $g^*(M)=k$, we must have $e\in C^*$, and hence $|C^*|=k$.

Fix an element $e\in E(M)$. Since $e$ is not a coloop, $r(M\ba e)=r(M)$. Furthermore,
\[
    |E(M\ba e)|=(k-1)(r(M)-1)+1.
\]
Thus, $M\ba e$ attains equality in the size bound for simple $(k-1)$-degenerate matroids. Applying the equality argument in Proposition~\ref{prop: size_bound}, we may successively delete $r(M)-2$ cocircuits, each of size $k-1$, leaving a simple rank-$2$ restriction with exactly $k$ elements. Therefore, $M$ has a restriction isomorphic to $U_{2,k}$. Let its ground set $\{e_1,\ldots,e_k\}$ be $L$. Suppose that a cocircuit $C^*$ of $M$ intersects $L$. We claim that
\[
    |C^*\cap L|\geq k-1.
\]
Suppose otherwise. Then there are distinct elements $x,y\in L-C^*$. Choose $z\in C^*\cap L$. Since $M|L\cong U_{2,k}$, the set $\{x,y,z\}$ is a circuit of $M$. However, $\{x,y,z\}\cap C^*=\{z\}$, contradicting circuit-cocircuit orthogonality.

Choose a cocircuit $C^*$ of size $k$ containing $e_1$. It follows that $C^*$ contains either all of $L$ or exactly $k-1$ elements of $L$. Suppose first that $L\subseteq C^*$. Since $|C^*|=|L|=k$, we have $C^*=L$. As $C^*$ is a cocircuit, $r(E(M)-C^*)=r(M)-1$. Since $r(C^*)=2$, we have
\[
    \lambda_M(C^*)=r(C^*)+r(E(M)-C^*)-r(M)=1.
\]
Because $M$ is $3$-connected and $|C^*|\geq 2$, it follows that $|E(M)-C^*|\leq 1$. But $E(M)-C^*$ is a hyperplane, so this implies $r(M)\leq 2$, contradicting our assumption that $r(M)\geq 3$.

Therefore, $C^*$ contains exactly $k-1$ elements of $L$. We may assume that
\[
    C^*\cap L=L-\{e_k\}.
\]
Let $D^*$ be a cocircuit of size $k$ containing $e_k$. By the same orthogonality argument, $|D^*\cap L|\geq k-1$. Since $e_k\notin C^*$ and $e_k\in D^*$, the cocircuits $C^*$ and $D^*$ are distinct. Moreover,
\[
    |C^*\cap D^*|\geq k-2\geq 2,
\]
contradicting Lemma~\ref{lem: cocircuit_intersection}. Thus, there is no $3$-connected extremal example of rank at least three. Consequently, the only $3$-connected equality case is $U_{2,k+1}$.

We may now assume that $M$ is not $3$-connected. The matroid $M$ is connected, since otherwise one of its components would be a proper restriction with cogirth at least $k$. Therefore, $M$ has a nontrivial $2$-sum decomposition $M=M_1\oplus_2 M_2$ at a basepoint $p$. By Lemma~\ref{lem: 2-sum_components_extremal}, both $M_1$ and $M_2$ are simple minimally $k$-degenerate matroids satisfying
\[
    |E(M_i)|=(k-1)(r(M_i)-1)+2.
\]
Since the $2$-sum is nontrivial, each $M_i$ has fewer elements than $M$. The induction hypothesis therefore implies that each $M_i$ is isomorphic to either $U_{2,k+1}$ or $U_{2,k+1}\oplus_2 U_{2,k+1}$.

By Lemma~\ref{lem: contraction is mkd}, both $M_1/p$ and $M_2/p$ are minimally $k$-degenerate. Let $Q$ be a matroid isomorphic to $U_{2,k+1}\oplus_2 U_{2,k+1}$. We show that $Q$ has no element whose contraction is minimally $k$-degenerate.

Let $A$ and $B$ be the two $k$-element sides of $Q$, and let $e\in A$. In $Q/e$, the set $A-\{e\}$ is a parallel class of size $k-1$, while every element of $B$ is in a singleton parallel class. Choose $a\in A-\{e\}$. In $(Q/e)\ba a$, the set $A-\{e,a\}$ is a parallel class of size $k-2$, while the elements of $B$ remain in singleton parallel classes. Hence the cocircuits of $(Q/e)\ba a$ obtained as complements of these parallel classes have sizes $k$ and $2k-3$. Since $2k-3\geq k$, we have $g^*((Q/e)\ba a)=k$. Thus, $Q/e$ has a proper restriction of cogirth $k$ and is not minimally $k$-degenerate. The case $e\in B$ is symmetric. Therefore, no element of $Q$ has the required contraction property.

Since $M_i/p$ is minimally $k$-degenerate for each $i\in\{1,2\}$, neither $M_1$ nor $M_2$ can be isomorphic to $Q$. It follows that $M_1\cong M_2\cong U_{2,k+1}$, and hence
\[
    M\cong U_{2,k+1}\oplus_2 U_{2,k+1}.
\]

We omit the routine proof that both $U_{2,k+1}$ and $U_{2,k+1} \oplus_2 U_{2,k+1}$ are minimally $k$-degenerate.
\end{proof}

We now prove Theorem~\ref{thm: non-simple min k-degen}. We note the following lemmas.

\begin{lemma}
\label{lem: parallel_class}
For $k \geq 3$, let $M$ be a minimally $k$-degenerate matroid that is not isomorphic to $U_{1,k}$, and let $\{e_1, \ldots, e_{k-1}\}$ be a maximal set of parallel elements of $M$. Then $M/e_1\ba \{e_2, \ldots, e_{k-1}\}$ is minimally $k$-degenerate.
\end{lemma}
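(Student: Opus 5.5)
Write $P=\{e_1,\ldots,e_{k-1}\}$ and $N=M/e_1\ba\{e_2,\ldots,e_{k-1}\}$. The plan is to identify $N$ with $M/P$ and then check the two defining conditions of minimal $k$-degeneracy directly, using the description of cocircuits of a contraction: the cocircuits of $M/P$ are exactly the cocircuits of $M$ disjoint from $P$.

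\emph{Step 1: identify $N$.} In $M/e_1$ each $e_i$ with $i\ge 2$ is a loop, so deleting it is the same as contracting it. Hence $N=M/P$. Because $P$ is a full parallel class, no element of $E(M)-P$ is parallel to $e_1$, so $N$ is loopless. Next we check that $N$ is non-empty. If $E(M)=P$, then $M\cong U_{1,k-1}$ has cogirth $k-1<k$, which is impossible since $M$ has cogirth $k$. So $E(N)=E(M)-P\neq\emptyset$; the hypotheses $M\not\cong U_{1,k}$ and $k\ge3$ are used to rule out such degenerate situations.

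\emph{Step 2: cogirth of $N$.} Every cocircuit of $N$ is a cocircuit of $M$, and $g^*(M)=k$. Therefore $g^*(N)\ge k$. By the remark at the start of this section, it remains only to show that every proper non-empty restriction of $N$ has cogirth less than $k$.

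\emph{Step 3: proper restrictions of $N$.} Let $X$ be a non-empty proper subset of $E(M)-P$, and put $M'=M|(X\cup P)$. Then $M'$ is a proper restriction of $M$, so it has a cocircuit $D$ with $|D|\le k-1$. Also $N|X=M'/P$, since contraction of $P$ commutes with deletion of elements outside $P$. We split into two cases.
\begin{enumerate}[label=(\alph*)]
\item If $D\cap P=\emptyset$, then $D$ is a cocircuit of $M'/P=N|X$ of size at most $k-1$, and we are done.
\item If $D$ meets $P$, then $D\supseteq P$, because a cocircuit meeting a parallel class contains all of it (by orthogonality with the $2$-circuits). Since $|P|=k-1\ge|D|$, we get $D=P$. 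Then $X$ is a hyperplane of $M'$ and $r(P)=1$, so $r(X)+r(P)=r(M')$. Thus $P$ is a separator of $M'$, which gives $M'=M|X\oplus M|P$ and $N|X=M'/P=M|X$. Now $M|X$ is itself a non-empty proper restriction of $M$, so it has cogirth at most $k-1$, and hence so does $N|X$.
\end{enumerate}

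I expect the main subtlety to be case (b). There one must observe that a small cocircuit meeting the parallel class is forced to equal it, and that this makes $P$ a separator; this is exactly where the size $k-1$ of the class is used. The rest is bookkeeping with the cocircuits of contractions.
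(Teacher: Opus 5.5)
Your proof is correct and is essentially the paper's argument: both pass to $M|(X\cup P)$, take a cocircuit of size at most $k-1$, and split according to whether it avoids $P$ (giving a small cocircuit of the contraction) or equals $P$. The only difference is organizational. The paper first disposes of the case $e_1\notin\cl_M(X)$ by noting that the restriction then equals $M|X$, and rules out $C^*=P$ otherwise by orthogonality with a circuit through $e_1$ inside $X\cup\{e_1\}$; you instead use a rank count to show directly that $D=P$ makes $P$ a separator, so $N|X=M|X$.
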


\begin{proof}
Since $\cl_M(e_1)=\{e_1, \ldots, e_{k-1}\}$, it is clear that $M/e_1\ba \{e_2, \ldots, e_{k-1}\}$ is loopless. Because $g^*(M/e_1)\geq g^*(M)=k$ and $\{e_2, \ldots, e_{k-1}\}$ are loops in $M/e_1$, it follows that $g^*(M/e_1\ba \{e_2, \ldots, e_{k-1}\})\geq k$. 

Suppose that $M/e_1\ba \{e_2, \ldots, e_{k-1}\}$ has a proper nonempty restriction $N$ with $g^*(N)\ge k$. Then $\{e_1,\ldots, e_{k-1}\}\subseteq \cl_M(E(N))$; otherwise, $e_1 \notin \cl_M(E(N))$ and so $N$ is isomorphic to the restriction $M|E(N)$, which is a proper restriction of $M$ with cogirth at least $k$, a contradiction. 

Let $N'=M|(E(N)\cup\{e_1, \ldots, e_{k-1}\})$. Since $N'$ is a proper nonempty restriction of $M$, the matroid $N'$ has a cocircuit $C^*$ of size at most $k-1$. Note that $N = N'/e_1\ba \{e_2, \ldots, e_{k-1}\}$, so if $C^*\cap \{e_1, \ldots, e_{k-1}\}=\emptyset$, then $C^*$ is also a cocircuit of $N$, a contradiction. Moreover, since $\{e_1, \ldots, e_{k-1}\}$ is a parallel class of $N'$, every cocircuit of $N'$ that intersects this class must contain all of it. Thus, $\{e_1, \ldots, e_{k-1}\} \subseteq C^*$. Because $|C^*| \leq k-1$, we must have exactly $C^*=\{e_1, \ldots, e_{k-1}\}$. 

Because $\{e_1,\ldots, e_{k-1}\}\subseteq \cl_M(E(N))$, there is a circuit $C$ of $N'$ such that $e_1 \in C\subseteq E(N)\cup\{e_1\}$. By orthogonality, the intersection of the circuit $C$ and the cocircuit $C^*$ cannot be exactly one element. However, $C \cap C^* = \{e_1\}$, which is a contradiction. 
\end{proof}

\begin{lemma}
\label{lem: series connection}
    Suppose $M_1$ and $M_2$ are minimally $k$-degenerate matroids where $k\geq 2$ and $E(M_1)\cap E(M_2)=\{p\}$. Then their series connection $S(M_1,M_2)$ is also minimally $k$-degenerate.
\end{lemma}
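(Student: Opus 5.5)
The plan is to work with cocircuits throughout, using the duality $S(M_1,M_2)=P(M_1^*,M_2^*)^*$. The circuits of a parallel connection $P(N_1,N_2)$ are the circuits of $N_1$, the circuits of $N_2$, and the sets $(C_1\cup C_2)-\{p\}$ with $p\in C_i\in\cC(N_i)$. Dualizing, the cocircuits of $S=S(M_1,M_2)$ are:
\begin{enumerate}
    \item[(a)] the cocircuits of $M_1$;
    \item[(b)] the cocircuits of $M_2$;
    \item[(c)] the sets $(C_1^*\cup C_2^*)-\{p\}$ with $p\in C_i^*\in\cC^*(M_i)$.
\end{enumerate}
The proof has two parts: showing $g^*(S)\geq k$, and showing every proper nonempty restriction of $S$ has a cocircuit of size less than $k$.

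\textbf{Cogirth.} $S$ is loopless because $M_1$ and $M_2$ are. Cocircuits of types (a) and (b) have size at least $k$. A cocircuit of type (c) has size at least
\[
(k-1)+(k-1)=2k-2\geq k,
\]
since $k\geq 2$. Hence $g^*(S)\geq k$. By the remark at the start of this section, it then suffices to show that $S\ba X$ has a cocircuit of size less than $k$ for every nonempty $X\subsetneq E(S)$.

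\textbf{Case $p\in X$.} Dualizing $P(N_1,N_2)/p=(N_1/p)\oplus(N_2/p)$ gives
\[
S\ba p=(M_1\ba p)\oplus(M_2\ba p).
\]
So $S\ba X$ is a direct sum of restrictions of $M_1\ba p$ and $M_2\ba p$. Take a nonempty summand. It is a proper nonempty restriction of some $M_i$, so by minimality of $M_i$ it has a cocircuit of size at most $k-1$. That set is also a cocircuit of the direct sum.

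\textbf{Case $p\notin X$.} Write $X=X_1\cup X_2$ with $X_i\subseteq E(M_i)-\{p\}$, and say $X_1\neq\emptyset$. For non-basepoint elements, deletion commutes with series connection, so
\[
S\ba X=S(M_1\ba X_1,\,M_2\ba X_2).
\]
(I will verify this via the dual statement for contraction in parallel connections.) First suppose $X_1\neq E(M_1)-\{p\}$. Then $M_1\ba X_1$ is a proper nonempty restriction of $M_1$, so it has a cocircuit $D$ with $|D|\leq k-1$. By the description (a)--(c) applied to $S(M_1\ba X_1,M_2\ba X_2)$, every cocircuit of the first part is a cocircuit of the series connection, whether or not it contains $p$. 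So $D$ is the required small cocircuit of $S\ba X$.

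\textbf{Degenerate subcase $X_1=E(M_1)-\{p\}$.} Here $M_1\ba X_1$ consists of $p$ alone, as a coloop. Dually, contracting $E(M_1)-\{p\}$ in $P(M_1^*,M_2^*)$ turns $p$ into a loop, because $p$ is not a coloop of $M_1^*$ (it is not a loop of $M_1$). Dualizing back gives
\[
S\ba (E(M_1)-\{p\})=(M_2\ba p)\oplus\{p\},
\]
with $p$ a coloop. Deleting $X_2$ keeps $p$ a coloop, so $\{p\}$ is a cocircuit of size $1<k$.

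The main obstacle is justifying these structural identities for series connections under deletion: that deleting non-basepoint elements commutes with $S(\cdot,\cdot)$, and the degenerate subcase where one side collapses to the basepoint. I would establish both by dualizing the corresponding, more familiar, facts about contraction in parallel connections (Oxley, Section~7.1), checking that the general definition of series connection covers the case where $p$ becomes a coloop in a side.
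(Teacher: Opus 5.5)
\textbf{There is a gap in the case $p\notin X$ with $X_1\neq E(M_1)-\{p\}$, though it is easily patched.} Your overall strategy matches the paper's: use the cocircuit description to get $g^*(S)\geq k$, then choose a side meeting the restriction in a proper nonempty set and use the minimality of that side.

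The failing step is your claim that, by (a)--(c) applied to $S(M_1\ba X_1,M_2\ba X_2)$, every cocircuit $D$ of $M_1\ba X_1$ is a cocircuit of $S\ba X$. The description (a)--(c) holds only when $p$ is a coloop of neither part. Nothing prevents $p$ from being a coloop of $M_2\ba X_2$, even when $X_2\neq E(M_2)-\{p\}$.

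Here is a concrete counterexample. Take $k=3$, $M_1\cong U_{2,4}$ on $\{p,a,b,c\}$, $M_2\cong U_{2,4}$ on $\{p,d,e,f\}$, and $X=\{a,d,e\}$. Then $X_1=\{a\}$ puts you in your nondegenerate subcase. The circuits of $S$ are $\{a,b,c\}$, $\{d,e,f\}$ and certain $5$-element sets, so none lies in $\{p,b,c,f\}$. Hence $S\ba X$ is free and its cocircuits are singletons. Every cocircuit of $M_1\ba a\cong U_{2,3}$ has two elements, so no choice of $D$ is a cocircuit of $S\ba X$. Your degenerate subcase does not catch this, because it handles only the collapse of the side you fixed by the choice $X_1\neq\emptyset$.

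The conclusion survives. If $p$ is a coloop of $M_2\ba X_2$, then it is a coloop of $S\ba X$, so $\{p\}$ is a cocircuit of size $1<k$.

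A cleaner fix is the uniform transfer the paper uses, stated precisely. Let $A=E(S)-X$ and choose $i$ with $A_i=A\cap E(M_i)$ proper and nonempty. A cocircuit $D$ of $M_i|A_i$ with $|D|<k$ has the form $C\cap A_i=C\cap A$ for some cocircuit $C$ of $M_i$. This $C$ is a type (a) cocircuit of $S$, which is legitimate since $g^*(M_j)\geq 2$ means $p$ is a coloop of neither $M_j$. Its nonempty trace $C\cap A$ therefore contains a cocircuit of $S|A$ of size at most $|D|<k$.

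This one-step argument covers $p\in X$, $p\notin X$, and every coloop situation at once. It makes the identities $S\ba p=(M_1\ba p)\oplus(M_2\ba p)$ and $S\ba X=S(M_1\ba X_1,M_2\ba X_2)$, and the accompanying case analysis, unnecessary.
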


\begin{proof}
    Let $S=S(M_1,M_2)$. Since $g^*(M_i)=k\geq 2$, neither $M_i$ has a coloop. Thus the series connection at $p$ is well-defined.

    The cocircuits of $S$ are precisely the cocircuits of $M_1$ and $M_2$ together with the sets $C_1^*\cup C_2^*-\{p\}$ where $C_i^*$ is a cocircuit of $M_i$ containing $p$. As $|C^*|\geq k\geq 2$ for every cocircuit $C^*$ in either $M_1$ or $M_2$, it follows that $g^*(S)\geq k$. Moreover, each $M_i$ has a cocircuit with exactly $k$ elements, and it is also a cocircuit of $S$. Consequently, $g^*(S)=k$.

    To see that $S$ is minimally $k$-degenerate, suppose that $A$ is a non-empty proper subset of $E(S)$ such that $g^*(S|A)\geq k$. Let $A_i$ be the intersection of $A$ and $E(M_i)$ for $i\in\{1,2\}$. We may assume that $A_1$ is a proper non-empty subset of $E(M_1)$. By assumption, $A_1$ contains a cocircuit $D^*$ of $M_1|A_1$ with $|D^*|<k$, and $D^*$ is also a cocircuit of $S$. Therefore, the set $D^*$ is a union of cocircuits of $S|A$, and hence $S|A$ has a cocircuit of size less than $k$, a contradiction.
\end{proof}

We are now ready to prove Theorem~\ref{thm: non-simple min k-degen}. 

\begin{proof}[Proof of Theorem~\ref{thm: non-simple min k-degen}]
It is elementary to see that minimally $2$-degenerate matroids are precisely $U_{r,r+1}$ for some $r\geq 1$, and each $U_{r,r+1}$ can be obtained via iterated series connections of $U_{1,2}$. Therefore, we assume that $k\geq 3$.

Since $M \ba e$ is $(k-1)$-degenerate for every element $e \in E(M)$, it follows by Proposition~\ref{prop: size_bound} that $|E(M \ba e)| \leq (k-1)r(M \ba e)$. Because $M$ has cogirth $k \geq 3$, it has no coloops, so $r(M \ba e) = r(M)$. Therefore, $|E(M)| - 1 \leq (k-1)r(M)$ and so $|E(M)| \leq (k-1)r(M) + 1$. 

Suppose that $M$ has size equal to $(k-1)r(M)+1$. We proceed via induction on the rank. If $r(M) =1$, then $M \cong U_{1,k}$ and the result holds. Assume $r(M) > 1$. Observe that for every element $e$ in $E(M)$, the deletion $M \ba e$ attains the maximum size for a $(k-1)$-degenerate matroid. Since the final set in a slicing of $M \ba e$ is a rank-$1$ flat of size $k-1$, it follows that $M \ba e$ and, therefore $M$, contains a parallel class of size $k-1$. Furthermore, this parallel class cannot have size $k$ in $M$; otherwise, $M$ would contain a proper restriction isomorphic to $U_{1,k}$, a contradiction.

It follows by Lemma~\ref{lem: parallel_class} that the matroid $M/e_1 \ba \{e_2, \ldots, e_{k-1}\}$ is minimally $k$-degenerate. Since $M/e_1 \ba \{e_2, \ldots, e_{k-1}\}$ has rank $r(M)-1$ and size $(k-1)(r(M)-1) + 1$, by induction, it can be built from $U_{1,k}$ by repeatedly applying series connection. 

Because $M$ is minimally $k$-degenerate, $M \ba e_1$ contains a cocircuit of size at most $k-1$. This cocircuit must be of the form $C^* - e_1$, where $C^*$ is a cocircuit of $M$ containing $e_1$. Since $M$ has cogirth $k$, it follows that $|C^*| = k$. Because $\{e_1, \ldots, e_{k-1}\}$ is a set of parallel elements in $M$, any cocircuit intersecting this set must contain all of it; thus, $\{e_1, \ldots, e_{k-1}\} \subseteq C^*$. Let $C^* = \{e_1, \ldots, e_{k-1}, f\}$. 

Note that this cocircuit $C^*$ is unique. Suppose that there is another cocircuit $D^* = \{e_1, \ldots, e_{k-1}, g\}$ of size $k$. By cocircuit elimination on $e_1$, there is a cocircuit $E^* \subseteq (C^* \cup D^*) - \{e_1\} = \{e_2, \ldots, e_{k-1}, f, g\}$. Since $E^*$ does not contain $e_1$, it is disjoint from the parallel class $\{e_1, \ldots, e_{k-1}\}$ so $E^* \subseteq \{f, g\}$. Since $M$ has cogirth $k \geq 3$, this is a contradiction.

Because $C^* = \{e_1, \ldots, e_{k-1}, f\}$ is the unique cocircuit containing the parallel class $\{e_1, \ldots, e_{k-1}\}$ of $M$, it now follows that $M$ is obtained from $M/e_1 \ba \{e_2, \ldots, e_{k-1}\}$ by a series connection with a $U_{1,k}$ having ground set $\{e_1, \ldots, e_{k-1}, f\}$ and basepoint $f$.

By Lemma~\ref{lem: series connection}, matroids obtained via iterated series connections of $U_{1,k}$ are minimally $k$-degenerate. One may easily check that they attain the maximum size bound.
\end{proof}

\section{Arboricity and Cocircumference}\label{arboricity}

Recall that the arboricity, $a(M)$, of a matroid $M$ is the minimum number of bases required to cover $E(M)$, or equivalently, the minimum number of independent sets into which $E(M)$ can be partitioned. The sizes of the largest circuit and the largest cocircuit of $M$ are denoted by $c(M)$ and $c^*(M)$, respectively. 

Minh and Trung~\cite[Theorem~3.3]{nguyen} showed the following.

\begin{proposition}
\label{symbolic_result}
Let $M$ be a matroid. Then $a(M) \leq c^*(M)$.
\end{proposition}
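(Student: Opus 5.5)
We prove $a(M)\le c^*(M)$ using Edmonds' covering theorem together with the degeneracy machinery of Section~\ref{sec: basic_prop}. Edmonds' theorem states that $E(M)$ can be partitioned into $t$ independent sets if and only if $|A|\le t\,r(A)$ for every subset $A\subseteq E(M)$. It therefore suffices to show $|A|\le c^*(M)\,r(A)$ for every $A$. Since loops would make $a(M)$ infinite, we work under the paper's standing assumption that $M$ is loopless.

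The key step is that every loopless $M$ is $c^*(M)$-degenerate. Let $A$ be a non-empty subset of $E(M)$. We must find a cocircuit of $M|A$ of size at most $c^*(M)$.

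\begin{itemize}
\item Because $A$ contains no loop, $r(A)\ge 1$, so $\cl_M(A)$ is a non-empty flat. Choose a hyperplane $H$ of $M$ that does not contain $A$; one exists because $r(A)\ge 1$.
\item The complement $C^*=E(M)-H$ is then a cocircuit of $M$ meeting $A$.
\item The set $C^*\cap A$ is non-empty and is a union of cocircuits of $M|A$, since cocircuits of a restriction are the minimal non-empty intersections of cocircuits of $M$ with $A$.
\item Hence $M|A$ has a cocircuit of size at most $|C^*\cap A|\le |C^*|\le c^*(M)$.
\end{itemize}

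This shows $d(M)\le c^*(M)$.

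Now set $k=d(M)$ and apply Proposition~\ref{prop: size_bound} to each restriction $M|A$. Every restriction of a $k$-degenerate matroid is $k$-degenerate, so $|A|\le k\,r(A)\le c^*(M)\,r(A)$ for every non-empty $A$. Edmonds' theorem then gives $a(M)\le d(M)\le c^*(M)$, which in fact establishes the stronger bound that arboricity is at most degeneracy.

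No step here is a real obstacle. The only point requiring care is the fact that $C^*\cap A$ contains a cocircuit of $M|A$. This is the same observation already used in the proof of Proposition~\ref{matroid_equivalence}, so it can be cited from there. If one prefers not to quote Edmonds' theorem, one could build the partition greedily from a slicing. That approach is harder, since the slicing alone gives $k$ bases only in the maximal case, so we would rely on Edmonds' theorem instead.
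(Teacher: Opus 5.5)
Your proof is correct, but it follows a different route from the paper. The paper derives this bound from Proposition~\ref{prop: arboricity_bound} ($a(M)\le d(M)\le c^*(M)$), which it proves by a direct induction on rank:

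\begin{enumerate}
\item Delete a minimum cocircuit $C^*=\{e_1,\dots,e_t\}$, where $t\le k$.
\item By induction, cover $E(M\ba C^*)$ by $k$ bases $B_1,\dots,B_k$ of the hyperplane $E(M)-C^*$.
\item Take $B_i\cup\{e_i\}$ for $i\le t$ and $B_i\cup\{e_1\}$ for $i>t$. These are bases of $M$ because no $e_j$ lies in $\cl_M(E(M)-C^*)$.
\end{enumerate}

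You instead use only the hereditary density bound $|A|\le d(M)\,r(A)$ from Proposition~\ref{prop: size_bound}, and then quote the Edmonds--Nash-Williams covering theorem. Your version is shorter given that theorem. It also makes clear that degeneracy is used only as an upper bound for $\max_{A\neq\emptyset}\lceil |A|/r(A)\rceil$, which equals $a(M)$. The paper's version is self-contained and constructive, producing the covering bases explicitly from a slicing.

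You also justify $d(M)\le c^*(M)$, which the paper simply asserts, and your hyperplane argument for it is sound.

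Your closing remark is mistaken, though. The slicing approach is not harder, because arboricity only asks for a \emph{covering} by bases. A cocircuit with $t<k$ elements is therefore handled by reusing $e_1$ in the extra bases, exactly as the paper does.
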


We strengthen this bound using degeneracy $d(M)$ as follows. 

\begin{proposition}
\label{prop: arboricity_bound}
Let $M$ be a matroid. Then $a(M) \le d(M) \le c^*(M)$.
\end{proposition}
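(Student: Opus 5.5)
The inequality $d(M)\le c^*(M)$ is the easy half. Let $k=d(M)$. By definition some restriction $N=M|A$ has cogirth $k$, so every cocircuit of $N$ has at least $k$ elements. Pick any cocircuit $D^*$ of $N$. Every cocircuit of a restriction $M|A$ has the form $C^*\cap A$ for some cocircuit $C^*$ of $M$; this follows since complements of hyperplanes of $M|A$ are traces of complements of hyperplanes of $M$. Hence
\[
c^*(M)\ge |C^*|\ge |D^*|\ge k .
\]
If $M$ has loops, we first delete them. Loops do not affect the cocircuit structure, and a loopless matroid is the setting in which $d(M)$ is defined.

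For $a(M)\le d(M)$, we use the slicing from Proposition~\ref{matroid_equivalence}(iii). Let $k=d(M)$ and let $C^*_0,\ldots,C^*_{r-1}$ be a slicing with flats $F_0=E(M)\supseteq F_1\supseteq\cdots\supseteq F_r=\emptyset$. Each $C^*_i$ is a cocircuit of $M|F_i$ with $|C^*_i|\le k$. Label the elements of $C^*_i$ as $x_{i,1},\ldots,x_{i,|C^*_i|}$. For $j\in\{1,\ldots,k\}$, let $I_j$ consist of those $x_{i,j}$ that exist. The sets $I_1,\ldots,I_k$ partition $E(M)$, since the $C^*_i$ partition $E(M)$.

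It remains to show that each $I_j$ is independent, and this is the only substantive step. We repeat the argument of the corollary on bases of maximal degenerate matroids. Since $C^*_i$ is a cocircuit of $M|F_i$, the set $F_{i+1}=F_i-C^*_i$ is a hyperplane of $M|F_i$. Therefore no element of $C^*_i$ lies in $\cl_M(F_{i+1})$.

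We build $I_j$ from the top index downward. Suppose $J\subseteq F_{i+1}$ is independent and $x\in C^*_i$. Then $x\notin\cl(F_{i+1})\supseteq\cl(J)$, so $J\cup\{x\}$ is independent. By induction on $i$ from $r-1$ down to $0$, the set $I_j\cap F_i$ is independent for every $i$. In particular $I_j$ itself is independent.

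This partition uses at most $k$ independent sets, so $a(M)\le k=d(M)$. I expect no real obstacle. The only care needed is the empty-slot bookkeeping when $|C^*_i|<k$, which the construction handles because we simply omit missing elements, and the loop issue noted above.
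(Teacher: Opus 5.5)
Your proof is correct and is essentially the paper's argument. The paper inducts on rank: it deletes a minimum-size cocircuit $C^*=\{e_1,\ldots,e_t\}$ and enlarges each of $k$ bases of $M\backslash C^*$ by one element of $C^*$, which is your slicing construction unrolled one cocircuit at a time. The paper pads with $e_1$ to get a covering by bases, whereas you leave slots empty to get a partition into independent sets. You also justify $d(M)\le c^*(M)$, which the paper only asserts.

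Your remark about first deleting loops is unnecessary, and it would not be harmless for $a(M)$, since a matroid with a loop has no partition into independent sets. The paper's standing assumption that all matroids are loopless makes this moot.
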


\begin{proof}
We proceed by induction on rank. A $k$-degenerate rank-$1$ matroid is $U_{1,s}$ for some $s\leq k$, and hence can be covered by $k$ bases. We therefore assume that $r(M)\geq 2$ and the statement holds for every $k$-degenerate matroid $N$ with $r(N)<r(M)$. Let $C^* = \{e_1, \ldots, e_t\}$ be a cocircuit of $M$ of minimum size. Note that $t \leq k$ and $M' = M \ba C^*$ is $k$-degenerate. Since $M'$ has rank $r(M)-1$, by induction, $E(M')$ can be covered by $k$ bases $B_1, \dots, B_k$. For each $i$ in $\{1, \ldots, t\}$, we let $B_i' = B_i \cup \{e_i\}$. For the remaining indices $i$ from $t+1$ to $k$, if any, let $B_i' = B_i \cup \{e_1\}$. Note that $B_1', \ldots, B_k'$ are bases of $M$ that cover $E(M)$. Therefore $a(M) \leq k$. Since the degeneracy of a matroid does not exceed the size of the largest cocircuit, the result follows.
\end{proof}

The next result determines precisely when degeneracy equals the maximum cocircuit size. It is well known that for connected graphs, degeneracy equals the maximum degree if and only if the graph is regular. The following is its matroid analogue.

\begin{proposition}
\label{degeneracy_equals_cogirth}
Let $M$ be a connected matroid. Then the degeneracy of $M$ equals the size of a largest cocircuit of $M$ if and only if all cocircuits of $M$ have the same size.
\end{proposition}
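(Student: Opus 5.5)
The forward direction is immediate, so most of the plan goes into the converse, where connectivity is the key input.

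\emph{Cocircuits of equal size imply equality.} Suppose every cocircuit of $M$ has size $c$. Then $g^*(M)=c$, and since $M$ is a restriction of itself, $d(M)\ge g^*(M)=c$. Combined with $d(M)\le c^*(M)=c$ from Proposition~\ref{prop: arboricity_bound}, this gives $d(M)=c^*(M)$.

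\emph{Equality implies cocircuits of equal size.} Suppose $d(M)=c^*(M)=c$. Since $d(M)$ is the maximum cogirth over restrictions, there is a non-empty $A\subseteq E(M)$ with $g^*(M|A)=c$. I first record a basic fact, the one already used in the proof of Proposition~\ref{matroid_equivalence}: if a cocircuit $D^*$ of $M$ meets $A$, then $D^*\cap A$ contains a cocircuit of $M|A$. Hence
\[
|D^*|\ \ge\ |D^*\cap A|\ \ge\ c\ \ge\ |D^*|,
\]
so $D^*\subseteq A$ and $|D^*|=c$. In words, every cocircuit of $M$ that meets $A$ has size exactly $c$ and lies entirely inside $A$.

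Next I use connectivity to show $A=E(M)$. Suppose not, and pick $e\in E(M)-A$ and $f\in A$. Since $M$ is connected, so is $M^*$. Hence any two elements of $M$ lie in a common circuit of $M^*$, that is, in a common cocircuit $D^*$ of $M$. This $D^*$ contains $f$, so it meets $A$, and by the fact above $D^*\subseteq A$. But $e\in D^*$ and $e\notin A$, a contradiction. Therefore $A=E(M)$.

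With $A=E(M)$, the restriction $M|A$ is $M$ itself, so $g^*(M)=c=c^*(M)$. Every cocircuit of $M$ therefore has size exactly $c$.

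The only step that needs care is the fact that a cocircuit $D^*$ of $M$ meeting $A$ has $D^*\cap A$ containing a cocircuit of $M|A$. This holds because $D^*$ is the complement of a hyperplane $H$ of $M$. Then $A-D^*=A\cap H$ spans a flat of $M|A$ of rank at most $r(A)-1$, as $D^*$ meets $A$. Hence $A\cap H$ lies in some hyperplane of $M|A$, whose complement in $A$ is a cocircuit of $M|A$ contained in $D^*\cap A$. The rest is the standard use of the fact that in a connected matroid every pair of elements lies in a common cocircuit.
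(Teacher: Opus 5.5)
Your proof is correct, and it takes a more direct route than the paper's.

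\textbf{The paper's route.} The paper argues by a minimal counterexample on rank. It deletes a cocircuit $C^*$ with $|C^*|<c$ and uses the slicing characterization to get $d(M\backslash C^*)=c$. It then passes to a connected component $N$ of $M\backslash C^*$ with $d(N)=c^*(N)=c$, and invokes minimality to conclude that every cocircuit of $N$ has size $c$. Only then does it run the connectivity argument with $e\in E(N)$ and $f\in C^*$. The sole job of that induction is to produce a set, namely $E(N)$, on which the restriction has cogirth at least $c$, so that any cocircuit of $M$ meeting it must lie inside it. Knowing only $d(N)=c$ would not give this, which is why the paper needs the induction hypothesis.

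\textbf{Your route.} You get such a set immediately by taking $A$ to be a witness for $d(M)=\max_A g^*(M|A)$. The same two ingredients then finish the job: a cocircuit of $M$ meeting $A$ contains a cocircuit of $M|A$, and in a connected matroid any two elements share a cocircuit. This removes the induction, the component decomposition, and the appeal to slicings. You also justify the restriction fact, which the paper uses without comment.

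\textbf{A generalization you get for free.} The first half of your argument never uses connectivity. For any matroid, a set $A$ with $g^*(M|A)=c^*(M)$ has every cocircuit that meets it contained in it, so $A$ is a separator of $M$. This yields, for arbitrary $M$, that $d(M)=c^*(M)$ if and only if some connected component of $M$ has all of its cocircuits of size $c^*(M)$.

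The paper's approach mainly has the virtue of paralleling the familiar recursive argument for graphs. Yours is shorter and exposes what is actually needed.
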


\begin{proof}
Let $c$ be the size of a largest cocircuit of $M$. If all cocircuits have size $c$, it is clear that $d(M)=c$. Conversely, suppose that $M$ is a connected matroid of minimal rank such that $d(M)$ equals the largest cocircuit size $c$, but $M$ has a cocircuit $C^*$ of size $|C^*| < c$. Let $M' = M \ba C^*$. By the slicing characterization of degeneracy, if $d(M') \le c-1$, then $d(M) \le c-1$, a contradiction. Therefore, $d(M') = c$.

Because $d(M') = c$, there exists a connected component $N$ of $M'$ such that $d(N) = c$. Since $N$ is a restriction of $M$, its largest cocircuit size $c^*(N)$ is at most $c$. Since $c = d(N) \le c^*(N) \le c$, we must have $c^*(N) = c$. Thus, by the minimality of the rank of $M$, all cocircuits of $N$ have exactly $c$ elements. 

Let $e$ be an element of $E(N)$, and let $f$ be an element of $C^*$. Because $M$ is connected, there is a cocircuit $D^*$ of $M$ containing both $e$ and $f$. The restriction of $N$ to $D^* \cap E(N)$ is a union of cocircuits of $N$. Since $e \in D^* \cap E(N)$, this intersection is non-empty and thus contains at least one cocircuit of $N$, so $|D^* \cap E(N)| \geq c$. Since the size of $D^*$ is at most $c$, it follows that $D^* \subseteq E(N)$. However, $f \in D^* - E(N)$, a contradiction. 
\end{proof}

Note that the matroids appearing in Proposition~\ref{degeneracy_equals_cogirth} are the duals of equicardinal matroids~\cite{murty_71}.

The following is a matroid analogue of~\cite[Theorem~69]{bickle} and generalizes
Proposition~\ref{prop: arboricity_bound}. 

\begin{proposition}
\label{arboricity_general}
Let $M$ be a $k$-degenerate matroid of rank $r$. For positive integers $t_1, \ldots, t_m$ such that $t_1 + \cdots + t_m = k$, the ground set $E(M)$ can be partitioned into sets $T_1, \ldots, T_m$ such that $M|T_i$ is $t_i$-degenerate for every non-empty $T_i$. 
\end{proposition}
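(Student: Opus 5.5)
Induct on the rank $r$, following the argument of Proposition~\ref{prop: arboricity_bound}, which is the case $t_1=\cdots=t_k=1$. Rank $1$ is immediate. Here $M\cong U_{1,s}$ with $s\le k$, so distribute the $s$ parallel elements so that $T_i$ receives at most $t_i$ of them. Then $M|T_i\cong U_{1,|T_i|}$ with $|T_i|\le t_i$, and this is $t_i$-degenerate.

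For the inductive step, let $r\ge 2$ and take the first set $C^*$ of a slicing of $M$, so that $|C^*|\le k$ and $F=E(M)-C^*$ is a hyperplane. The restriction $M|F$ is $k$-degenerate of rank $r-1$. By induction, $F$ can be partitioned into $T'_1,\dots,T'_m$ with $M|T'_i$ being $t_i$-degenerate. Now split $C^*$ into disjoint pieces $S_1,\dots,S_m$ with $|S_i|\le t_i$; this is possible since $\sum t_i=k\ge |C^*|$. Set $T_i=T'_i\cup S_i$.

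The key claim is that $M|T_i$ is $t_i$-degenerate. I would verify this with the slicing characterization, Proposition~\ref{matroid_equivalence}(iii) applied to $M|T_i$, or more directly through definition (i). Let $A\subseteq T_i$ be nonempty.
\begin{itemize}
\item If $A\cap S_i=\emptyset$, then $A\subseteq T'_i$. In this case $M|A$ has a cocircuit of size at most $t_i$ by the induction hypothesis.
\item If $A\cap S_i\ne\emptyset$, then $A\cap C^*=A\cap S_i$ is nonempty, because $C^*\cap T_i=S_i$. Since $C^*$ is a cocircuit of $M$, the set $C^*\cap A$ is a nonempty union of cocircuits of $M|A$. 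Indeed, its complement $A-C^*=A\cap F$ lies in the flat $F$, so $C^*\cap A$ is the complement in $A$ of a flat of $M|A$ that is not all of $A$. Hence $M|A$ has a cocircuit contained in $A\cap S_i$, of size at most $|S_i|\le t_i$.
\end{itemize}
This is the same observation used in the proof of (iii)$\Rightarrow$(i) in Proposition~\ref{matroid_equivalence}.

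The only point needing care is the case $A\cap S_i\ne\emptyset$, where we need $A\cap C^*$ to contain a cocircuit of $M|A$. This holds because $E(M)-C^*$ is a flat, so $\cl_{M|A}(A-C^*)\subseteq (A-C^*)$, and therefore $A\cap C^*$ meets, and in fact contains, a cocircuit of $M|A$ whenever it is nonempty. Some $T_i$ may end up empty, for instance when $S_i=\emptyset$ and $T'_i=\emptyset$; the statement only concerns nonempty $T_i$, so nothing more is needed there. This completes the induction.
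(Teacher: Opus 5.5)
Your proof is correct and is essentially the paper's argument. The paper also splits every set $C^*_j$ of a slicing into pieces of size at most $t_i$ and lets $T_i$ collect the $i$th pieces, which is exactly what your rank induction produces when unrolled. The only difference is in the verification. The paper exhibits a slicing of $M|T_i$ directly, whereas you check definition (i) on each nonempty $A\subseteq T_i$, which is the (iii)$\Rightarrow$(i) argument of Proposition~\ref{matroid_equivalence} applied one level at a time.
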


\begin{proof}
Since $M$ is $k$-degenerate, there is a slicing $C_0^*, \ldots, C_{r-1}^*$ which partitions $E(M)$ such that each $C_j^*$ is a cocircuit of $M \ba (C_0^* \cup \ldots \cup C_{j-1}^*)$ of size at most $k$.

Because $\sum_{i=1}^m t_i = k$, we can partition each cocircuit $C_j^*$ into $m$ disjoint sets $C_{j,1}^*, \ldots, C_{j,m}^*$ (some of which may be empty) such that $|C_{j,i}^*| \le t_i$ for all $1 \le i \le m$. For each $i \in \{1, \dots, m\}$, define $T_i = \bigcup_{j=0}^{r-1} C_{j,i}^*$. By construction, $\{T_1, \ldots, T_m\}$ partitions the ground set $E(M)$. 

Suppose that $T_i$ is not empty. To see that $M|T_i$ is $t_i$-degenerate, we delete the sets $C_{0,i}^*, \ldots, C_{r-1,i}^*$ in order from $M|T_i$. At step $j$, the set $C_{j,i}^*$ is the intersection of $C_j^*$ with the remaining ground set. Therefore, $C_{j,i}^*$ is a union of cocircuits in that restriction. Since $|C_{j,i}^*| \le t_i$, we can remove $C^*_{j,i}$ by successively deleting cocircuits of size at most $t_i$. Performing this for all $j$ yields a valid slicing of $M|T_i$, confirming it is $t_i$-degenerate.
\end{proof}

We devote the remainder of this section to strengthening Proposition~\ref{symbolic_result}. The \textit{co-arboricity} $a^*(M)$ of a matroid $M$ is the arboricity of its dual $M^*$. Furthermore, for any element $e$ in $E(M)$, we use $c_e(M)$ and $c^*_e(M)$ to denote the \textit{$e$-circumference} and \textit{$e$-cocircumference}, the sizes of the largest circuit and largest cocircuit containing $e$, respectively.

We will require the following covering result, which builds upon a theorem of Oxley \cite[Theorem 4.3.13]{Oxl11}. While the original theorem establishes the existence and size of the covering, the additional stipulation that it can include an arbitrarily specified cocircuit follows straightforwardly from Oxley's proof.

\begin{theorem}
\label{covering}
Let $M$ be a connected matroid with at least two elements, and let $e \in E(M)$. Then there exists a family of at most $c_e(M)-1$ cocircuits of $M$, each containing $e$, whose union is $E(M)$. Furthermore, this family can be chosen to include any arbitrary cocircuit $C^*$ containing $e$. 
\end{theorem}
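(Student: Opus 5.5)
We plan to follow the standard argument behind Oxley's Theorem~4.3.13, by induction, while tracking the prescribed cocircuit. Fix $e$ and a cocircuit $C^*$ with $e\in C^*$. Connectivity guarantees that such cocircuits exist: since $|E(M)|\ge 2$, the element $e$ is not a loop, so some cocircuit contains $e$. The goal is to build a chain of cocircuits $C^*=C^*_1, C^*_2,\ldots,C^*_m$, each containing $e$, with $m\le c_e(M)-1$ and $\bigcup_i C^*_i=E(M)$.

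The intended engine is dual to the usual ``circuits through $e$'' covering argument. Let $U_j=C^*_1\cup\cdots\cup C^*_j$. If $U_j\ne E(M)$, we choose $C^*_{j+1}\ni e$ meeting $E(M)-U_j$, which is possible because $M$ is connected. We choose it so as to maintain a circuit $Z_j$ containing $e$ with $|Z_j\cap U_j|\ge j+1$. Equivalently, we aim to show the existence of a circuit $Z\ni e$ that meets each ``new part'' $C^*_i-U_{i-1}$ and also contains $e$. Then $|Z|\ge m+1$, which gives $m\le c_e(M)-1$.

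The concrete mechanism we plan to use is the following. Consider $N_j=M/ (E(M)-U_j)$, or rather work in $M\ba(E(M)-U_j)$ when building circuits. The key tool is that if $x\notin U_j$, then by connectivity there is a circuit through $e$ and $x$. Using strong circuit elimination between it and the current circuit $Z_j$, we obtain a circuit through $e$ picking up a new element outside $U_j$ while keeping enough of the old elements. The next cocircuit $C^*_{j+1}$ is then chosen as a cocircuit containing $e$ and that new element, avoiding as much as possible. By orthogonality, each $C^*_i$ meets $Z$ in at least two elements, $e$ and one in its new part.

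The choice $C^*_1=C^*$ is free in this scheme. Oxley's proof starts from an arbitrary cocircuit through $e$, and every later step depends only on the current union. This is precisely why the ``furthermore'' clause costs nothing.

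The main obstacle is the elimination step: ensuring the new circuit still meets all previously used new parts $C^*_i-U_{i-1}$. I would first try a minimal counterexample argument, in which one contracts $C^*-e$ (or deletes) to reduce to a smaller connected matroid. The approach is to apply induction to a connected component of $M\ba (C^*-e)$ or $M/(C^*-e)$ containing $e$, lift the cover back, and add $C^*$ itself. This gains one cocircuit while circuits through $e$ lengthen by at least one.
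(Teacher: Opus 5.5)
Your proposal has a genuine gap. You correctly see that starting from an arbitrary $C^*$ makes the ``furthermore'' clause free, but neither of your two mechanisms yields the bound $c_e(M)-1$. The chain argument rests on finding one circuit $Z\ni e$ that meets every new part $C^*_i-U_{i-1}$. Orthogonality only supplies a second element of $C^*_i\cap Z$, not one in the new part. You also give no rule for choosing $C^*_{j+1}$, and the invariant $|Z_j\cap U_j|\ge j+1$ does not sustain itself. For example, let $G$ have vertices $u,v,w_1,\dots,w_k$ with $k\ge 2$, the edge $e=uv$, and edges $uw_i,vw_i$. Every circuit through $e$ is a triangle, so $c_e=3$. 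Take $C^*_1$ to be the star at $u$ and $C^*_2=\{e,vw_1,uw_2,\dots,uw_k\}$, a bond meeting $E-U_1$. Then the invariant holds at $j=2$ with $Z_2=\{e,uw_1,vw_1\}$, yet $vw_2\notin U_2$ and no circuit through $e$ has four elements. Strong circuit elimination cannot repair this, because it guarantees only one prescribed element in the new circuit, not $e$, a new $x$, and the old elements together.

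Your fallback reduction is also the wrong one. In $M\ba(C^*-e)$ the set $\{e\}=C^*-(C^*-e)$ is a cocircuit, so $e$ is a coloop and its component is just $\{e\}$. In $M/(C^*-e)$ the element $e$ can be a loop: in $U_{2,4}$, $C^*-e$ has two elements and these span. Even when $e$ is not a loop, components avoiding $e$ are never reached. In $M(K_4)$ with $e=12$ and $C^*$ the star at vertex $1$, the contraction $M(K_4)/\{13,14\}$ is the parallel class $\{12,23,24\}$ plus the loop $34$, and the lifted cover together with $C^*$ misses $34$.

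The missing idea is to delete some elements of $C^*-e$ and contract the rest. Apply Tutte's theorem (for any element $f$ of a connected matroid $N$, $N\ba f$ or $N/f$ is connected) to the elements of $C^*-e$ one at a time. This gives a partition $(X,Y)$ of $C^*-e$ with $M'=M\ba X/Y$ connected on $(E(M)-C^*)\cup\{e\}$, a set with at least two elements when $C^*\neq E(M)$.
\begin{itemize}
\item Every circuit of $M'$ through $e$ is $Z-Y$ for a circuit $Z\ni e$ of $M$ avoiding $X$. Orthogonality with $C^*$ forces $Z\cap Y\neq\emptyset$, so $c_e(M')\le c_e(M)-1$.
\item Every cocircuit of $M'$ is $D^*-X$ for a cocircuit $D^*$ of $M$ avoiding $Y$. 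So a cover of $E(M')$ by cocircuits through $e$ lifts to cocircuits of $M$ through $e$.
\end{itemize}
Induction then gives at most $c_e(M)-2$ such cocircuits covering $(E(M)-C^*)\cup\{e\}$, and adding $C^*$ completes the family.
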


Next we prove Theorem~\ref{e-circumference_intro}. 

\begin{proof}[Proof of Theorem~\ref{e-circumference_intro}]
 By Theorem~\ref{covering}, we have a collection $\{C_1^*, \ldots, C_{q}^*\}$ of cocircuits each containing $e$ such that their union is $E(M)$ and $q\leq c_e(M)-1$. Since each $C_i^*$ contains $e$, the set $C_i^* - \{e\}$ is coindependent. This implies that we have a collection of $q$ coindependent sets of $M$ whose union is $E(M) - \{e\}$. To cover the remaining element $e$, we require one additional coindependent set (such as $\{e\}$). This gives a total of $q+1\leq c_e(M)$ cobases whose union is $E(M)$. Therefore $a^*(M) \leq c_e(M)$. The result now follows by matroid duality. 
\end{proof}

An element $e$ of a matroid $M$ is \textit{free} if every circuit of $M$ that contains $e$ is spanning. We prove a strengthening of Theorem~\ref{e-circumference_intro} as follows.  

\begin{theorem}
\label{cospan}
Let $M$ be a connected matroid and $e \in E(M)$. If $e$ is not free in $M$, then $a(M) \leq c^*_e(M)-1$.
\end{theorem}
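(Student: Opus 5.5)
The plan is to prove the dual statement $a^*(M^*) \le c_e(M^*)-1$ and translate back, following the pattern of the proof of Theorem~\ref{e-circumference_intro} but saving one set by exploiting the nonspanning circuit. Since $e$ is not free, fix a circuit $C$ of $M$ with $e\in C$ and $r(C)<r(M)$. Apply Theorem~\ref{covering} to the connected matroid $M^*$ at the element $e$. Its cocircuits are the circuits of $M$, and $c_e(M^*)=c^*_e(M)$. This gives circuits $C_1,\ldots,C_q$ of $M$, each containing $e$, with union $E(M)$ and $q\le c^*_e(M)-1$. By the ``furthermore'' clause we may take $C_1=C$. (The cases where $M$ has one element, or $r(M)=1$, are trivial or vacuous, since in rank one every circuit through $e$ is spanning.)

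As in the earlier proof, each $I_i=C_i-\{e\}$ is independent in $M$, and $I_1,\ldots,I_q$ cover $E(M)-\{e\}$. The earlier argument spends an extra independent set $\{e\}$ to cover $e$. Here the goal is to absorb $e$ into the existing $q$ sets, which yields $a(M)\le q\le c^*_e(M)-1$.

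To place $e$, use that $C_1-\{f\}$ is independent for every $f\in C_1$. Replacing $I_1$ by $C_1-\{f\}$ for some $f\in C_1-\{e\}$ covers $e$ but may uncover $f$. So I want $f\in C_1-\{e\}$ that is still covered by the other sets after a small modification. The first attempt is to find $f\in C_1-\{e\}$ that also lies in some $C_j$ with $j\ge 2$. Then $f\in I_j$ and the replacement costs nothing.

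If no such $f$ exists, then $C_1-\{e\}$ meets no other $C_j$. Here nonspanningness enters. Since $r(C_1)<r(M)$, there is an element $g\notin \cl(C_1)$, and $g\in I_j$ for some $j\ge 2$. Then $(C_1-\{f\})\cup\{g\}$ is independent for any $f\in C_1-\{e\}$, so we can move $g$ into the first set. I would try to use the freed room in $I_j$ to take in $f$. Concretely, I would look for $f\in C_1-\{e\}$ with $f\notin\cl(I_j-\{g\})$, or else perform a basis-exchange step: the unique circuit in $(I_j-\{g\})\cup\{f\}$ must avoid $e$, hence it meets $E(M)-C_1$, which leaves room to swap.

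I expect this exchange step to be the main obstacle, because the covering family from Theorem~\ref{covering} is not under fine control. If the direct exchange fails, the fallback is to reopen Oxley's proof of Theorem~\ref{covering}, as the paper already does for the ``furthermore'' clause. The aim would be to choose the family so that the circuit $C_2$ added after $C_1$ meets $C_1-\{e\}$ in some element. Connectivity and nonspanningness of $C_1$ should allow $C_2$ to be chosen through an element $g\notin\cl(C_1)$ and an element $f\in C_1-\{e\}$ simultaneously. That would reduce everything to the easy case above.
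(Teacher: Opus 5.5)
Your setup matches the paper's: you cover $E(M)$ by circuits $C_1,\ldots,C_q$ of $M$ through $e$, obtained from Theorem~\ref{covering} applied to $M^*$, with the nonspanning circuit as $C_1$. Your first case is also correct. But the proposal stops at exactly the step that carries the content.

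When $C_1-\{e\}$ meets no other $C_j$, you only say you would ``look for'' $f\in C_1-\{e\}$ with $f\notin\cl(I_j-\{g\})$, without showing one exists. You then hedge with an unspecified exchange, and with a fallback of re-engineering the covering so that some $C_j$ passes through $e$, $g$ and a point of $C_1-\{e\}$. That fallback can fail outright. Take a theta graph whose three paths are $\{e\}$, $P_2$, $P_3$, with $C_1=\{e\}\cup P_2$ and $|P_3|\ge 2$. Every $g\notin\cl(C_1)$ lies in $P_3$, and the only circuit through $e$ and $g$ is $\{e\}\cup P_3$, which misses $C_1-\{e\}$. So as written the argument is incomplete.

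The missing step is one line. Suppose every $f\in C_1-\{e\}$ lay in $\cl(C_j-\{e,g\})$. Then $e\in\cl(C_1-\{e\})\subseteq\cl(C_j-\{e,g\})$, so $C_j-\{g\}$ would be dependent, which is impossible for a proper subset of the circuit $C_j$. Hence a suitable $f$ exists. The sets $(C_1-\{f\})\cup\{g\}$ and $(C_j-\{e,g\})\cup\{f\}$, together with the other $I_i$, then cover $E(M)$ by $q$ independent sets.

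Simpler still, you need not force $e$ into the first set. Replace $I_1$ by $(C_1-\{e\})\cup\{g\}$, which is independent because $g\notin\cl(C_1-\{e\})$. Replace $I_j$ by $C_j-\{g\}$, which is independent and contains $e$. This needs no case split.

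This is precisely the paper's argument, written in the dual setting with cocircuits. There, $C_1^*-\{e\}$ is extended to a basis $B_1^*$ of $M^*$, and one takes $f\in B_1^*-C_1^*$ with $f\in C_2^*$. The covering family is then $B_1^*, C_2^*-\{f\}, C_3^*-\{e\},\ldots,C_q^*-\{e\}$.
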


\begin{proof}
We prove the dual statement: if $e$ is not free in $M^*$, then $a^*(M) \leq c_e(M) - 1$. Since $e$ is not free in $M^*$, there is a non-spanning circuit $C^*_1$ of $M^*$ that contains $e$. Since $C_1^*$ is a cocircuit of $M$, by Theorem~\ref{covering}, there is a collection $C_1^*, \ldots, C_{q}^*$ of cocircuits of $M$ such that their union is $E(M)$, each cocircuit contains $e$, and $q \leq c_e(M)-1$. Observe that each of $C_1^* - \{e\}, \ldots, C_{q}^* - \{e\}$ is coindependent in $M$ and their union is $E(M) - \{e\}$. 

Since $C_1^* - \{e\}$ is not a basis of $M^*$, we can extend it to form one, say, $B_1^*$. Let $f \in B_1^* - C_1^*$. We may assume that $f \in C_2^*$. Then the members of the collection $B_1^*, C_2^* - \{f\}, C_3^* - \{e\}, \ldots, C_{q}^* - \{e\}$ are coindependent sets of $M$. Notice that $e \notin B_1^*$ because $C_1^*$ is a circuit in $M^*$, but $e$ is covered by $C_2^* - \{f\}$ since $e \in C_2^*$ and $e \neq f$. Meanwhile, $f$ is covered by $B_1^*$. Thus, this collection of $q$ coindependent sets covers all of $E(M)$. Since $q \leq c_e(M)-1$, it follows that $a^*(M) \leq c_e(M)-1$. The result now follows by duality. 
\end{proof}

Finally we prove Theorem~\ref{thm: equality_arboricity_intro} that characterizes the matroids that satisfy the equality in Proposition~\ref{symbolic_result}.

\begin{proof}[Proof of Theorem~\ref{thm: equality_arboricity_intro}]
Suppose $M$ is a matroid such that $a(M) = c^*(M)$. By Theorem~\ref{e-circumference_intro}, we know $a(M) \le c_e^*(M) \le c^*(M)$ for every $e \in E(M)$. Therefore, we must have $a(M) = c_e^*(M)$ for every element $e$. 

By Theorem~\ref{cospan}, it follows that every element $e$ is free in $M$. Consequently, every circuit of $M$ is spanning so $M$ is uniform. It can be easily checked that $a(M) \leq c^*(M)-1$ for connected uniform matroids unless $M \cong U_{k,k+1}$ or $U_{1,k}$.
\end{proof}

\section{Consequences for graphs}\label{sec: graphs}
All graphs considered in this section are simple. We begin by recalling that a graph $G$ is \textit{minimally $k$-degenerate} if $G$ has degeneracy equal to $k$ and every proper subgraph of $G$ has degeneracy strictly less than $k$. We define \textit{minimally $k$-edge-degenerate} graphs analogously. 

Observe that a graph $G$ is minimally $k$-edge-degenerate if and only if $G$ is $k$-edge-connected, but every proper non-empty subgraph of $G$ fails to be $k$-edge-connected. This places minimally $k$-edge-degenerate graphs naturally into the broader edge-connectivity hierarchy. For example, a $k$-edge-connected graph $G$ is \textit{minimally $k$-edge-connected} if $G \ba e$ is not $k$-edge-connected for every $e \in E(G)$ (see~\cite{maurer1978}), whereas $G$ is \textit{uniformly $k$-edge-connected} if there are exactly $k$ edge-disjoint paths between every pair of distinct vertices of $G$ (see~\cite{goring2022, kingsford2025, xu2026}).

In this section, we apply our matroid results to characterize minimally $3$-degenerate graphs of extremal size. We first establish the following.

\begin{proposition}
\label{prop: min_3_degen_graphs}
Let $G$ be a graph with $n$ vertices. If $G$ is minimally $3$-degenerate, then the number of edges satisfies $|E(G)| \le 2n - 2$. This bound holds analogously if $G$ is minimally $3$-edge-degenerate.
\end{proposition}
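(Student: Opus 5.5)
The plan is to handle the two notions separately and reduce each to a bound already available. The edge-degenerate case goes through the matroid machinery; the vertex-degeneracy case is a direct graph argument.

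\emph{Minimally $3$-edge-degenerate.} Let $G$ be minimally $3$-edge-degenerate. By the observation preceding the statement, $G$ is $3$-edge-connected, while no proper non-empty subgraph is. Since $G$ is $3$-edge-connected, it has no isolated vertices (with at least one edge), so $G$ is connected and $r(M(G)) = n-1$.

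The bonds of $G$ are the cocircuits of $M(G)$, and restrictions of $M(G)$ correspond to edge-deleted subgraphs. Hence $M(G)$ has cogirth $3$ and every proper restriction has cogirth at most $2$. So $M(G)$ is minimally $3$-degenerate, and it is simple because $G$ is simple. Theorem~\ref{thm: simple min 3-degen} then gives
\[
|E(G)| \le 2r(M(G)) = 2(n-1) = 2n-2.
\]

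\emph{Minimally $3$-degenerate.} Now let $G$ be minimally $3$-degenerate. Since $G$ has degeneracy $3$, some subgraph has minimum degree $3$. By minimality, this subgraph is $G$ itself, so $\delta(G) \ge 3$. For each edge $e$, the graph $G - e$ is a proper subgraph, so it is $2$-degenerate.

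The simple bound for $2$-degenerate graphs on $n \ge 2$ vertices is $2n-3$ edges, recalled in the introduction of Section~\ref{sec: basic_prop}. Applying it to $G-e$ gives $|E(G)| - 1 \le 2n-3$, hence $|E(G)| \le 2n-2$. This argument does not even need connectivity; only the vertex count of $G-e$ matters. Alternatively, one can repeatedly delete vertices of degree at most $2$ from $G - e$.

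\emph{Main obstacle.} The only delicate point is the identification in the edge case. I need that ``every proper subgraph has edge-degeneracy less than $3$'' matches ``every proper restriction of $M(G)$ has cogirth at most $2$''. This holds because subgraphs differing only in isolated vertices have the same cycle matroid, and because a disconnected subgraph has a bond contained in a component. So the translation of the minimality condition is exact, and I would state it explicitly.
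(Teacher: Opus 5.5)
Your argument is correct. For the vertex-degeneracy half it is the paper's argument. $G\ba e$ is $2$-degenerate on $n$ vertices, so it has at most $2n-3$ edges. The paper cites Proposition~\ref{prop: lick_white_maximal_k_degen} for this, and you cite the equivalent well-known upper bound.

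For the edge-degenerate half you take a different route. The paper repeats the same one-line count on the graph side: it applies Mader's bound (Proposition~\ref{prop: mader_maximal_k_degen}) to the $2$-edge-degenerate graph $G\ba e$. That needs neither connectivity nor any translation to matroids.

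You instead pass to $M(G)$, verify that it is a simple minimally $3$-degenerate matroid of rank $n-1$, and invoke the inequality in Theorem~\ref{thm: simple min 3-degen}. That inequality is only Proposition~\ref{prop: size_bound} applied to $M(G)\ba e$, so there is no circularity. Your route also avoids relying on Mader's external theorem. The cost is that you must check connectivity (to get $r(M(G))=n-1$) and the exact correspondence between proper subgraphs and proper restrictions. You handle both correctly, and this is the same translation the paper itself makes later in the proof of Theorem~\ref{thm: simple min 3-degen_graphs}.

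You could lighten this step by not establishing full minimality of $M(G)$. Instead, apply Proposition~\ref{prop: size_bound} directly to the simple $2$-degenerate matroid $M(G\ba e)$. It has rank $n-1$ because $G\ba e$ is still connected.
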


\begin{proof}
Since $G$ is minimally $3$-degenerate, for an edge $e$ of $G$, the graph $G \ba e$ is $2$-degenerate, so by Proposition~\ref{prop: lick_white_maximal_k_degen}, the graph $G \ba e$ has at most $2n-3$ edges. Therefore $G$ has at most $2n-2$ edges. The analogous result for minimally $3$-edge-degenerate graphs follows by Proposition~\ref{prop: mader_maximal_k_degen}.  
\end{proof}

The following notes that the class of extremal minimally $3$-degenerate graphs is contained in the class of extremal minimally $3$-edge-degenerate graphs. 

\begin{lemma}
\label{lem: min_3_degen_implies_min_3_edge_degenerate}
Let $G$ be a minimally $3$-degenerate graph with $n$ vertices and $2n-2$ edges. Then $G$ is minimally $3$-edge-degenerate.
\end{lemma}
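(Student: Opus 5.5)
We must show two things about $G$: that it is $3$-edge-connected, and that every proper nonempty subgraph fails to be $3$-edge-connected. Since every proper subgraph $H$ has degeneracy at most $2$ and $ed(H)\le d(H)$, every proper subgraph is $2$-edge-degenerate, so it has edge-connectivity at most $2$. The second half is therefore immediate. The whole difficulty is to show $\lambda(G)\ge 3$, i.e.\ that $G$ has no bond of size at most $2$.

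First I would record the structure forced by the edge count. Since $G$ has degeneracy $3$ and every proper subgraph has degeneracy at most $2$, the graph $G$ has minimum degree at least $3$. Otherwise deleting a vertex of degree at most $2$ would leave a proper subgraph containing the $3$-core, which would have degeneracy $3$. Moreover, $G$ is connected and has no isolated vertices, for the same reason. For each edge $e$, the graph $G\ba e$ is $2$-degenerate with $2n-3$ edges on $n$ vertices. By Proposition~\ref{prop: lick_white_maximal_k_degen}, equality in the $2$-degenerate bound means $G\ba e$ is maximal $2$-degenerate. In particular, for every vertex subset $S$ with $|S|\ge 2$, the induced subgraph $G[S]$ satisfies $|E(G[S])|\le 2|S|-3$, unless $S=V(G)$. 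Indeed, if $S\ne V(G)$, then $G[S]$ is a proper subgraph, hence $2$-degenerate, and the Lick--White bound applies.

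Now suppose, for contradiction, that $G$ has an edge cut $\delta(S)$ with $|\delta(S)|\le 2$, where $\emptyset\ne S\subsetneq V(G)$. Write $s=|S|$ and $t=n-s$. Since the minimum degree is at least $3$ and at most two edges leave $S$, the side $S$ cannot be a single vertex; hence $s\ge 2$, and likewise $t\ge 2$. Counting edges gives
\[
2n-2=|E(G)|\le (2s-3)+(2t-3)+2=2n-4,
\]
which is a contradiction. Hence $\lambda(G)\ge 3$. Combined with $ed(G)\le d(G)=3$, this gives $ed(G)=3$, so $G$ is minimally $3$-edge-degenerate.

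The main thing to check carefully is the degree argument, namely that $\delta(G)\ge 3$ follows from minimality. If a vertex $v$ had degree at most $2$, then every subgraph containing $v$ with minimum degree at least $3$ would have to avoid $v$. The subgraph certifying degeneracy $3$ could therefore be taken inside $G-v$, a proper subgraph, which contradicts minimality. One should also make sure the Lick--White bound is invoked only when $|S|\ge 2$, the condition $n\ge k$ with $k=2$. This is exactly why the single-vertex sides are ruled out separately by the degree argument.
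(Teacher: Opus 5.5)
Your proof is correct, but it takes a different route from the paper.

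The paper handles the only nontrivial point, $ed(G)\ge 3$, in one line. By Mader's bound (Proposition~\ref{prop: mader_maximal_k_degen}) with $k=2$, a simple $2$-edge-degenerate graph on $n$ vertices has at most $2n-3$ edges. So $G$, with $2n-2$ edges, is not $2$-edge-degenerate. Then $ed(G)\le d(G)=3$ gives $ed(G)=3$, and $ed(H)\le d(H)\le 2$ for every proper subgraph $H$ finishes the argument directly from the definition.

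You instead prove $\lambda(G)\ge 3$ directly:
\begin{itemize}
\item Minimality forces $\delta(G)\ge 3$, so both sides of a cut with at most two edges have at least two vertices.
\item Minimality also makes every proper induced subgraph $2$-degenerate.
\item The Lick--White bound on each side of such a cut then gives $|E(G)|\le 2n-4$, a contradiction.
\end{itemize}

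In effect you replace Mader's edge-degeneracy theorem with a cut-counting argument built on the more elementary vertex-degeneracy bound. The cost is that you use minimality to get $\lambda(G)\ge 3$, whereas the paper's step needs only the edge count of $G$ itself. The gain is a self-contained argument that also gives the explicit facts $\delta(G)\ge 3$ and $\lambda(G)\ge 3$.

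One small point: the sentence asserting that $G\ba e$ is maximal $2$-degenerate is never used. Your ``In particular'' claim is justified independently, so that sentence can be dropped.
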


\begin{proof}
By Proposition~\ref{prop: mader_maximal_k_degen}, it is clear that $G$ is not $2$-edge-degenerate. Since $d(G)=3$, and $ed(G) \leq d(G)$, it follows that the edge degeneracy of $G$ is three. 

Since the degeneracy of every proper subgraph of $G$ is less than three, it follows that the edge degeneracy of every proper subgraph of $G$ is less than three. Therefore $G$ is minimally $3$-edge-degenerate. 
\end{proof}

Using Theorem~\ref{thm: simple min 3-degen}, we now characterize minimally $3$-degenerate graphs of extremal size.  

\begin{theorem}
\label{thm: simple min 3-degen_graphs}
Suppose $G$ is a minimally $3$-degenerate graph with $n$ vertices. Then \[|E(G)|\leq 2n-2,\]
and the equality is attained if and only if

\begin{enumerate}
\item[(\romannum{1})] $G$ is isomorphic to a wheel graph $\mathcal{W}_k$ for some $k\geq 3$, or 

\item[(\romannum{2})] $G=G_1\oplus_2 G_2$ at a basepoint $p$ where, for each $i\in \{1,2\}$, the graph $G_i \cong \mathcal{W}_{k_i}$ for some $k_i \geq 3$, and $p$ is a spoke edge in both $G_1$ and $G_2$.
\end{enumerate}
\end{theorem}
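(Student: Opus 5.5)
The inequality $|E(G)|\le 2n-2$ is exactly Proposition~\ref{prop: min_3_degen_graphs}, so only the equality case needs work. The plan is to move to the cycle matroid $M=M(G)$ and apply Theorem~\ref{thm: simple min 3-degen}.

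First, suppose $|E(G)|=2n-2$. By Lemma~\ref{lem: min_3_degen_implies_min_3_edge_degenerate}, $G$ is minimally $3$-edge-degenerate. Since cocircuits of $M(G)$ are bonds of $G$ and restrictions of $M(G)$ are cycle matroids of subgraphs, $M$ is a simple minimally $3$-degenerate matroid. Next we check that $G$ is connected: otherwise some component would be a proper subgraph of degeneracy $3$ (the degeneracy of $G$ is attained in some component), contradicting minimality. Hence $r(M)=n-1$ and $|E(M)|=2r(M)$, so Theorem~\ref{thm: simple min 3-degen} applies and $M$ falls into (i), (ii) or (iii). Whirls $\mathcal{W}^k$ with $k\ge 2$ are not graphic, since they have a $U_{2,4}$-minor. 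So in case (ii) nothing occurs, and in case (iii) both summands must be wheels, because a $2$-sum is graphic only if both parts are (each is a minor of the sum). Thus $M\cong M(\mathcal{W}_k)$ or $M\cong M(\mathcal{W}_{k_1})\oplus_2 M(\mathcal{W}_{k_2})$ with $p$ a spoke in each. This requires $k_i\ge 3$, and $p$ is a spoke of $M(\mathcal{W}_3)=M(K_4)$ in the matroid sense, which covers every edge.

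Second, we lift this back to $G$. The wheels $\mathcal{W}_k$ with $k\ge 4$ are $3$-connected, and $\mathcal{W}_3=K_4$ is too, so by Whitney's $2$-isomorphism theorem $G\cong\mathcal{W}_k$ in case (i). For case (iii), the step needing care is that $G$ must itself be a graph $2$-sum $G_1\oplus_2 G_2$ along a spoke edge. First we show $G$ is $2$-connected: it has no isolated vertices, being minimally degenerate and non-empty, and a cut vertex would make $M$ disconnected. Then Whitney's theorem says every graph with cycle matroid $M(\mathcal{W}_{k_1})\oplus_2 M(\mathcal{W}_{k_2})$ is obtained by twistings of the graph $2$-sum of the two wheels glued along the spoke $p$ and then deleting $p$. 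The convention must follow the paper's $\oplus_2$. A twist about the $2$-separation simply re-glues with the other orientation of $p$, so it still gives a graph of the form in (ii). The wheels themselves admit no nontrivial twists because they are $3$-connected.

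Conversely, we need the wheels and these $2$-sums to be minimally $3$-degenerate graphs in the vertex sense, not just the edge sense; this is the main obstacle. The count is immediate: $\mathcal{W}_k$ has $k+1$ vertices and $2k$ edges. For the $2$-sum, $n=k_1+k_2$ and $|E|=2k_1+2k_2-2$, giving $2n-2$ in both cases. For degeneracy, $\mathcal{W}_k$ has minimum degree $3$. Deleting an edge creates a degree-$2$ vertex, and peeling rim vertices of degree $\le 2$ successively destroys the graph, so every proper subgraph is $2$-degenerate. For the $2$-sum, the minimum degree is $3$: the endpoints of $p$ lose one edge but gain neighbours from the other wheel. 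For any proper subgraph $H$, we would peel greedily: some edge is missing, so some rim or hub vertex reaches degree $\le 2$, and peeling then propagates around each rim and into the other wheel. We would verify this by a short case check according to whether the missing edge lies in $G_1$ or $G_2$. If this is awkward, an alternative is to note that any subgraph with minimum degree $3$ is $3$-edge-connected only if it contains all of a wheel side, and use the matroid minimality already established.
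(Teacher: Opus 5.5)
Your proposal is correct and follows the paper's route: use Lemma~\ref{lem: min_3_degen_implies_min_3_edge_degenerate} to see that $M(G)$ is a simple minimally $3$-degenerate matroid with $|E(M(G))|=2r(M(G))$, apply Theorem~\ref{thm: simple min 3-degen}, and discard whirls as non-graphic. You go beyond the paper by justifying the lift back to $G$ through Whitney's $2$-isomorphism theorem and by checking the converse with the peeling argument, which the paper omits; you should, however, drop the fallback in your last sentence, since minimal $3$-edge-degeneracy does not rule out proper subgraphs of minimum degree $3$.
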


\begin{proof}
The bound $|E(G)| \le 2n-2$ follows immediately from Proposition~\ref{prop: min_3_degen_graphs}. To characterize the graphs that attain equality, suppose $G$ has exactly $2n-2$ edges. By Lemma~\ref{lem: min_3_degen_implies_min_3_edge_degenerate}, $G$ is minimally $3$-edge-degenerate. Moreover, as $G$ is connected, its cycle matroid $M(G)$ is minimally $3$-degenerate and has rank $r(M(G)) = n-1$. Since $|E(M(G))| = 2n-2 = 2r(M(G))$, the matroid $M(G)$ attains the extremal size bound for a minimally $3$-degenerate matroid. Since whirls are non-graphic matroids, the characterization of $G$ as a wheel or the $2$-sum of wheels now follows directly from Theorem~\ref{thm: simple min 3-degen}. 

The converse is straightforward to check and hence is omitted here.
\end{proof}

\section*{Acknowledgments}

The authors thank James Oxley for suggesting the study of degeneracy in matroids.

\end{document}